\numberwithin{equation}{section}
\newtheorem{theorem}{Theorem}
\newtheorem{remark}[theorem]{Remark}
\newtheorem{lemma}[theorem]{Lemma}
\newlength{\drop}
\definecolor{amethyst}{rgb}{0.6, 0.4, 0.8}
\definecolor{burgundy}{rgb}{0.5, 0.0, 0.13}
\title{A monolithic multi-time-step computational framework 
  for first-order transient systems with disparate scales}
\author{\textbf{S.~Karimi and K.~B.~Nakshatrala}\\
Department of Civil and Environmental Engineering, 
University of Houston.}
\date{\today}
\begin{document}


\begin{titlepage}
    \drop=0.1\textheight
    \centering
    \vspace*{\baselineskip}
    \rule{\textwidth}{1.6pt}\vspace*{-\baselineskip}\vspace*{2pt}
    \rule{\textwidth}{0.4pt}\\[\baselineskip]
    {\LARGE \textbf{\color{burgundy}A monolithic multi-time-step 
    computational \\[0.3\baselineskip]
    framework for first-order transient systems \\[0.3\baselineskip]
    with disparate scales}}\\[0.3\baselineskip]
    \rule{\textwidth}{0.4pt}\vspace*{-\baselineskip}\vspace{3.2pt}
    \rule{\textwidth}{1.6pt}\\[\baselineskip]
    \scshape
    An e-print of the paper is available on arXiv: http://arxiv.org/abs/1405.3230. \par
    
    \vspace*{2\baselineskip}
    Authored by \\[\baselineskip]
    
    {\Large S.~Karimi\par}
    {\itshape Graduate Student, University of Houston.}\\[\baselineskip]
    
    {\Large K.~B.~Nakshatrala\par}
    {\itshape Department of Civil \& Environmental Engineering \\
    University of Houston, Houston, Texas 77204--4003. \\ 
    \textbf{phone:} +1-713-743-4418, \textbf{e-mail:} knakshatrala@uh.edu \\
    \textbf{website:} http://www.cive.uh.edu/faculty/nakshatrala\par}
        \begin{figure}[h]
  \centering
  \includegraphics[scale = 0.32, clip]{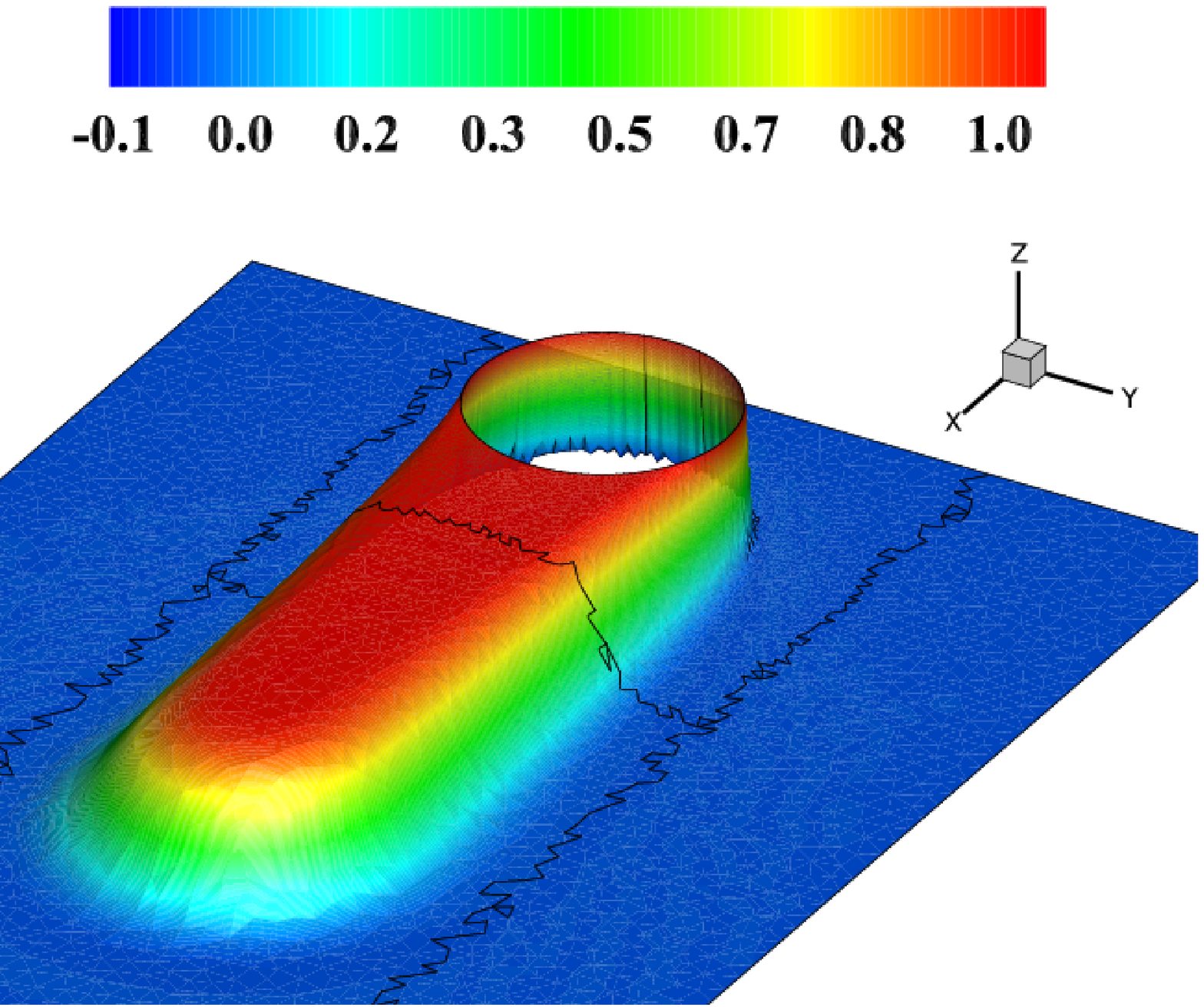}
\end{figure}
    \vfill
    {\scshape 2014} \\
    {\small Computational \& Applied Mechanics Laboratory} \par
  \end{titlepage}
  
\begin{abstract}
Developing robust simulation tools for problems involving 
multiple mathematical scales has been a subject of great 
interest in computational mathematics and engineering. A 
desirable feature to have in a numerical formulation for 
multiscale transient problems is to be able to employ 
different time-steps (multi-time-step coupling), and 
different time integrators and different numerical 
formulations (mixed methods) in different regions 
of the computational domain.
To this end, we present two new monolithic multi-time-step mixed 
coupling methods for \emph{first-order transient systems}. 
We shall employ unsteady advection-diffusion-reaction equation with
linear decay as the model problem, which offers several unique 
challenges in terms of non-self-adjoint spatial operator and 
rich features in the solutions.
We shall employ the dual Schur domain decomposition technique 
to split the computational domain into an arbitrary number of 
subdomains. It will be shown that the governing equations of 
the decomposed problem, after spatial discretization, will be 
differential/algebraic equations. This is a crucial observation 
to obtain stable numerical results. 
  Two different methods of enforcing compatibility along the 
  subdomain interface will be used in the time discrete 
  setting. A systematic theoretical analysis (which includes 
  numerical stability, influence of perturbations, bounds on 
  drift along the subdomain interface) will be performed.  
  The first coupling method ensures that there is no drift 
  along the subdomain interface, but does not facilitate explicit/implicit 
  coupling. The second coupling method allows explicit/implicit 
  coupling with controlled (but non-zero) drift in the solution 
  along the subdomain interface. 
  Several canonical problems will be solved to numerically 
  verify the theoretical predictions, and to illustrate the 
  overall performance of the proposed coupling methods. 
  Finally, we shall illustrate the robustness of the 
  proposed coupling methods using a multi-time-step 
  transient simulation of a fast bimolecular 
  advective-diffusive-reactive system. 
\end{abstract}
\keywords{multi-time-step schemes; monolithic coupling 
  algorithms; advective-diffusive-reactive systems; 
  partitioned schemes; differential-algebraic equations; 
  Baumgarte stabilization}

\maketitle


\section{INTRODUCTION AND MOTIVATION}
\label{Sec:First_Introduction}
Advection-diffusion-reaction equations can exhibit 
several mathematical (i.e., temporal and spatial) 
scales depending on the relative strengths of 
advection, diffusion and reaction processes, 
and on the strength of the volumetric source/sink. 
The presence of these mathematical scales is evident 
from the qualitative richness that the solutions of 
advection-diffusion-reaction equations exhibit. 
For example, it is well-known that solutions to 
advection-dominated problems typically exhibit 
steep gradients near the boundaries \citep{Gresho_Sani_v1}. 
Solutions to diffusion-dominated problems tend to be 
diffusive and smooth \citep{McOwen}, whereas reaction-dominated 
solutions typically exhibit sharp fronts and complex spatial 
patterns \citep{Walgraef}. These scales can be systematically 
characterized using the well-known non-dimensional numbers -- 
the P\'eclet number and the Damk\"ohler 
numbers \citep{Bird_Stewart_Lightfoot}. 
It needs to be emphasized that these equations, 
in general, are \emph{not} amenable to analytical 
solutions. Therefore, one has to rely on predictive 
numerical simulations for solving problems of any 
practical relevance. Due to the presence of disparate 
mathematical scales in these systems, it is highly 
desirable to have a stable computational framework 
that facilitates tailored numerical formulations in 
different regions of the computational domain. 

Several advances have been made in developing numerical 
formulations for advection-diffusion-reaction equations, 
especially in the area of stabilized formulations 
\citep{Codina_CMAME_2000_v188_p61,
Augustin_Caiazzo_Fiebach_Fuhrmann_John_Linke_Umla_CMAME_2011}, 
and in the area of discrete maximum principles 
\citep{Burman_Ern_CMAME_2002_v191_p3833}. 
However, the main research challenge that still remains is 
to develop numerical methodologies for these type of problems 
to adequately resolve different mathematical scales in time 
and in space. \emph{This 
paper precisely aims at addressing this issue by developing 
a stable multi-time-step coupling framework for first-order 
transient systems that allows different time-steps, different 
time integrators and different numerical formulations in 
different regions of a computational domain.}

Most of the prior works on multi-time-step coupling 
methods have focused on the second-order transient 
systems arising in the area of structural dynamics (e.g., 
see the discussion in \citep{Karimi_Nakshatrala_JCP_2014}, 
and references therein). Some attempts regarding time 
integration of partitioned first-order systems can be 
found in \citep{Nakshatrala_Hjelmstad_Tortorelli_IJNME_2008_v75_p1385,
Nakshatrala_Prakash_Hjelmstad_JCP_2009_v228_p7957}. 
In \citep{Nakshatrala_Hjelmstad_Tortorelli_IJNME_2008_v75_p1385}, 
a \emph{staggered} multi-time-step coupling method is proposed. 
This method is considered as a staggered scheme as the Lagrange 
multipliers are calculated in 
an explicit fashion (i.e., based on the quantities 
known at prior time-levels). The stability and 
accuracy (especially, the control of drift along 
the subdomain interface) have been improved through 
the use of projection methods at appropriate time-levels. 
Since the method is a staggered scheme the obvious 
drawback is that the overall accuracy is first-order. 
However, it needs to be emphasized that the method 
proposed in 
\citep{Nakshatrala_Hjelmstad_Tortorelli_IJNME_2008_v75_p1385} 
has better accuracy and stability properties than 
the previously proposed staggered schemes 
(e.g., \citep{Piperno_Farhat_Larrouturou_CMAME_1995_v124_p79,
Piperno_IJNMF_1997_v25_p1207}).
In \citep{Nakshatrala_Prakash_Hjelmstad_JCP_2009_v228_p7957}, 
several monolithic schemes are discussed for first-order 
transient systems but the treatment is restricted to transient 
diffusion equations (i.e., self-adjoint spatial operators) and 
multi-time-stepping was not addressed. 
Motivated by the work of Akkasale 
\citep{Akkasale_MSThesis_TAMU_2011}; in which it has 
been systematically shown that many popular staggered 
schemes (e.g., \citep{Piperno_Farhat_Larrouturou_CMAME_1995_v124_p79,
Piperno_IJNMF_1997_v25_p1207}) suffer from numerical instabilities 
for both first- and second-order transient systems; we herein 
choose a monolithic approach to develop coupling methods 
that allow multi-time-steps.

Recently, a multi-time-step monolithic coupling 
method for linear elastodynamics, which is a 
second-order transient system, has been proposed 
in \citep{Karimi_Nakshatrala_JCP_2014}. However, 
developing a multi-time-step coupling method for 
first-order transient systems (e.g., unsteady 
advection-diffusion and advection-diffusion-reaction 
equations) will bring unique challenges. To name a few:
\begin{enumerate}[(i)]
\item As shown in \citep{Karimi_Nakshatrala_JCP_2014}, 
  coupling explicit and implicit time-stepping schemes 
  is \emph{always} possible in the case of second-order 
  transient systems. We will show later in this paper 
  that such coupling is \emph{not} always possible 
  for first-order transient systems, and can be 
  achieved only if an appropriate stabilized form 
  of the interface continuity constraint is employed. We will 
  also show that this explicit/implicit coupling 
  for first-order transient systems will come at 
  an expense of controlled drift.
\item Spatial operators in advection-diffusion-reaction 
  equations are not self-adjoint. Symmetry and positive 
  definiteness of the discretized operators should 
  be carefully examined to ensure the stability of 
  multi-time-step coupling methods.
  For second-order transient systems, the overall 
  stability of the coupling method can be achieved 
  provided the stability criterion in each subdomain 
  is satisfied (which depends on the choice of the 
  time-stepping scheme in the subdomain and the 
  choice of the subdomain time-step) 
  \citep{Karimi_Nakshatrala_JCP_2014}. 
  We will show in a subsequent section that ensuring 
  the stability of the time-stepping schemes in 
  subdomains alone will not guarantee the overall 
  stability of the coupling method. There is a 
  need to place additional restrictions on the 
  continuity constraints along the subdomain 
  interface.  
\item The governing equations of decomposed 
  first-order transient systems form a system 
  of differential/algebraic equations (DAEs) 
  in Hessenberg form with a differential index 
  2. On the other hand, the governing equations for 
  second-order transient systems form a system 
  of DAEs with differential index 3. For more 
  details on DAEs and associated terminology, 
  see the brief discussion provided in subsection 
  \ref{Subsec:First_DAEs} or consult 
  \citep{Hairer_Wanner_V2}.
\end{enumerate}

The current paper builds upon the ideas presented in 
\citep{Nakshatrala_Prakash_Hjelmstad_JCP_2009_v228_p7957,
Karimi_Nakshatrala_JCP_2014}. 
\emph{The central hypothesis on which the proposed 
multi-time-step coupling framework has been developed 
is two-fold}: 
(i) The governing equations before the domain 
decomposition form a system of ordinary differential 
equations (ODEs). On the other hand, the governing 
equations resulting from the decomposition of the 
domain form a system of differential/algebraic 
equations. It needs to be emphasized that many of 
the popular time-stepping schemes (which are 
developed for solving ODEs) are not appropriate 
for solving DAEs \citep{Gear_Petzold_SIAM_JNA_1984_v21, 
Petzold_PhysicaD_1992}. At least, the accuracy 
and the stability properties will be altered 
considerably. The title of an influential paper 
in the area of numerical solutions of DAEs by 
Petzold \citep{Petzold_SIAMJSciStatComp_1982_v3_p367} 
clearly conveys the aforementioned sentiment: 
``\emph{Differential/algebraic equations are not ODEs}.'' 
Therefore, we shall take a differential/algebraic
equations perspective in posing the governing 
equations of the decomposed problems, and apply 
time-stepping strategies that are appropriate to 
solve DAEs. 
(ii) Development and performance of multi-time-step 
coupling methods for first-order transient systems 
is different from that of second-order transient 
systems.


The proposed monolithic multi-time-step coupling 
framework for first-order transient systems enjoys 
several attractive features, which will be illustrated 
in the subsequent sections by both theoretical analysis 
and numerical results. In the remainder of this paper, 
we shall closely follow the notation introduced for 
multi-time-step coupling in 
\citep{Karimi_Nakshatrala_JCP_2014}.

\section{CONTINUOUS MODEL PROBLEM:~TRANSIENT ADVECTION-DIFFUSION-REACTION 
  EQUATION}
\label{Sec:First_Continuous}
We shall consider transient advection-diffusion-reaction 
equation as the continuous model problem. Our choice 
provides an ideal setting for developing multi-time-step 
coupling methods for first-order transient systems, as 
the governing equations pose several unique challenges.
\emph{First}, the relative strengths of advection, 
diffusion, reaction, and volumetric source introduce 
multiple temporal scales, which compel a need for a 
multi-time-step computational framework. 
\emph{Second}, the spatial operator is not self-adjoint, which 
adds to the complexity of obtaining stability proofs. It needs 
to be emphasized that the current efforts on multi-time-step 
coupling have focused on second-order transient systems, and 
the stability analyses have been restricted to the cases in 
which the coefficient (i.e., ``stiffness'') matrix is symmetric 
and positive definite \citep{Karimi_Nakshatrala_JCP_2014}. 
This will not be the case with respect to the advective-diffusive 
and advective-diffusive-reactive systems.
\emph{Third}, a numerical method to the chosen model problem 
can serve as a template for developing multi-time-step coupling 
methods for more complicated and important problems like 
transport-controlled bimolecular reactions, which exhibit 
complex spatial and temporal patterns. None of the prior 
works on multi-time-step coupling methods have undertaken such 
a comprehensive study, which this paper strives to achieve. 

Consider a chemical species that is transported 
by both advection and diffusion processes, and 
simultaneously undergoes a chemical reaction. 
Let $\Omega \subset \mathbb{R}^{nd}$ denote the 
spatial domain, where ``$nd$'' denotes the number 
of spatial dimensions. The boundary is denoted 
by $\partial \Omega$, which is assumed to be 
piece-wise smooth. The gradient and divergence 
operators with respect to $\mathbf{x} \in \Omega$ 
are, respectively, denoted by $\mathrm{grad}[\cdot]$ 
and $\mathrm{div}[\cdot]$.
The time is denoted by $t \in \mathcal{I} := (0,T]$, 
where $\mathcal{I}$ is the time interval of interest.
Let $c(\mathbf{x},t)$ denote the concentration of the 
chemical species. As usual, the boundary is divided 
into two parts: $\Gamma^{\mathrm{D}}$ and $\Gamma^{\mathrm{N}}$ 
such that $\Gamma^{\mathrm{D}} \cup \Gamma^{\mathrm{N}} 
= \partial \Omega$ and $\Gamma^{\mathrm{D}} \cap 
\Gamma^{\mathrm{N}} = \emptyset$. $\Gamma^{\mathrm{D}}$ 
is the part of the boundary on which concentration 
is prescribed (i.e., Dirichlet boundary condition), 
and $\Gamma^{\mathrm{N}}$ is that part of the boundary 
on which flux is prescribed (i.e., Neumann boundary 
condition). 
We shall denote the advection velocity vector field by 
$\mathbf{v} (\mathbf{x},t)$. The diffusivity tensor, 
which is a second-order tensor, is denoted by $\mathbf{D}
(\mathbf{x})$, and is assumed to be symmetric and uniformly 
elliptic \citep{Evans_PDE}. 
The initial boundary value problem for a transient 
advective-diffusive-reactive system can be written 
as follows:
\begin{subequations}
\label{Eqn:AD_PDE_system}
  \begin{alignat}{2}
  \label{Eqn:AD_PDE}
    &\frac{\partial \mathrm{c}}{\partial t} + \mathrm{div}\left[ \mathbf{v} \mathrm{c} 
    - \mathbf{D}\left( \mathbf{x}\right) \mathrm{grad}[\mathrm{c}] \right] 
  + \beta \mathrm{c} = f(\mathbf{x},t) 
    \quad &&\mathrm{in} \; \Omega \times \mathcal{I} \\
    &\mathrm{c}(\mathbf{x},t) = \mathrm{c}^{\mathrm{p}}(\mathbf{x},t) 
    \quad &&\mathrm{on} \; \Gamma^{\mathrm{D}} \times 
    \mathcal{I} \\
    \label{Eqn:First_Neumann_BC}
    -&\widehat{\mathbf{n}}(\mathbf{x}) \cdot 
    \mathbf{D}(\mathbf{x}) \mathrm{grad}[\mathrm{c}] 
    = q^{\mathrm{p}}(\mathbf{x},t) \quad &&\mathrm{on} 
    \; \Gamma^{\mathrm{N}} \times \mathcal{I} \\
      \label{Eqn:AD_IC}
    &\mathrm{c}(\mathbf{x},t = 0) = \mathrm{c}_{0}(\mathbf{x}) 
    \quad &&\mathrm{in} \; \Omega 
  \end{alignat}
\end{subequations}
where $\widehat{\mathbf{n}}(\mathbf{x})$ denotes the unit 
outward normal to the boundary, $\mathrm{c}_0(\mathbf{x})$ is the 
prescribed initial concentration, $\mathrm{c}^{\mathrm{p}}(\mathbf{x},
t)$ is the prescribed concentration on the boundary, 
$q^{\mathrm{p}}(\mathbf{x},t)$ is the prescribed diffusive 
flux on the boundary, $f(\mathbf{x},t)$ is the prescribed 
volumetric source/sink, and $\beta \geq 0$ is the 
coefficient of decay due to a chemical reaction. 

As mentioned earlier, the mathematical scales 
in advective-diffusive-reactive systems can 
be characterized using popular non-dimensional 
numbers. A non-dimensional measure to identify the 
relative dominance of advection is the P\'eclet 
number, which can be defined as follows:
\begin{align}
  P_e(\mathbf{x},t) := \frac{L \|\mathbf{v}
    (\mathbf{x},t)\|}{D(\mathbf{x})}
\end{align}
where $L$ is the characteristic length, $D$ is the 
characteristic diffusivity, and $\|\cdot\|$ denotes 
the standard 2-norm. In the case of anisotropic 
diffusion tensor, $D(\mathbf{x})$ can be taken 
as the minimum eigenvalue of the diffusivity 
tensor at $\mathbf{x}$ (i.e., $D(\mathbf{x}) 
= \mathrm{min} \left\{\kappa \; | \; \mathrm{det} 
\left( \mathbf{D}(\mathbf{x}) - \kappa \mathbf{I}
\right)= 0 \right\}$). Clearly, the higher the 
P\'eclet number the greater will be the relative 
dominance of advection. 
A non-dimensional quantity to measure the relative 
dominance of the chemical reaction is the Damk\"ohler 
number, which takes the following form:
\begin{align}
  D_a := \frac{\beta L^2}{D(\mathbf{x})}
\end{align}
In the context of numerical solutions, the characteristic 
length is typically associated with an appropriate measure 
of the mesh size. A popular choice under the finite element 
method is $L = h_e/2$, where $h_e$ is the diameter of the 
circumscribed circle of the element and the factor $1/2$ is 
for convenience. This choice gives rise to what is commonly 
referred to as the element P\'eclet number (e.g., see 
\citep{Donea_Huerta}):
\begin{align}
  P_{e}^{h} = \frac{h_{e} 
    \|\mathbf{v}(\mathbf{x},t)\|}{2 D}
\end{align}
which will be used in subsequent sections, especially, 
in defining stabilized weak formulations. We shall employ 
the semi-discrete methodology \citep{Zienkiewicz} based on 
the finite element method for spatial discretization and 
the trapezoidal family of time-stepping schemes for the 
temporal discretization. 

\subsection{Trapezoidal family of time-stepping schemes}
The time interval of interest is divided 
into $\mathcal{N}$ sub-intervals such that 
\begin{align}
  \mathcal{I} = \left(0,T\right] 
    = \bigcup_{n = 1}^{\mathcal{N}} 
    \left( t^{(n-1)},t^{(n)} \right]
\end{align}  
where $t^{(0)} = 0$ and $t^{(\mathcal{N})} = T$. To 
make the presentation simple, we shall assume that 
the sub-intervals are uniform. That is,
\begin{align}
  t^{(n)} - t^{(n-1)} = \Delta t \quad 
  \forall n = 1,\cdots, \mathcal{N}
\end{align}
where $\Delta t$ will be referred to as the time-step. 
However, it should be noted that the methods presented 
in this paper can be easily extended to variable 
time-steps. The primary variable 
(which, in our case, will be the concentration) and 
the corresponding time derivative at discrete time 
levels are denoted as follows: 
\begin{align}
  d^{(n)} \approx \mathrm{c}(t = t^{(n)}), 
  \quad v^{(n)} \approx \left. 
  \frac{\partial \mathrm{c}}{\partial t}
  \right\vert_{t = t^{(n)}}
\end{align}
The trapezoidal family of time-stepping schemes 
can be compactly written as follows: 
\begin{align}
  d^{(n+1)} = d^{(n)} + 
  \Delta t \left((1 - \vartheta)v^{(n)} 
  + \vartheta v^{(n+1)}\right)
\end{align}
where $\vartheta \in [0, 1]$ is a user-specified parameter. 
Some popular time-stepping schemes under the trapezoidal 
family include the explicit Euler ($\vartheta = 0$, which is also 
known as the forward Euler), the midpoint rule $(\vartheta = 1/2)$, 
and the implicit Euler ($\vartheta = 1$, which is also known as the 
backward Euler). The forward Euler method is an explicit scheme, 
and the midpoint and the backward Euler schemes are implicit. 
The stability and accuracy properties of these time-stepping 
schemes in the context of \emph{ordinary differential equations} 
are well-known (e.g., see \citep{Hairer_Wanner_V1}). 

\subsection{Weak formulations}
We will now present several weak formulations for the 
initial boundary value problem given by equations 
\eqref{Eqn:AD_PDE}--\eqref{Eqn:AD_IC}, which will be 
used in the remainder of the paper. Since we address 
advection-dominated and reaction-dominated problems, 
we will present two popular stabilized weak formulations 
in addition to the Galerkin weak formulation. Let us introduce 
the following function spaces:
\begin{subequations}
  \begin{align}
    \mathsf{C}_{t} &:= \left\{\mathrm{c}(\mathbf{x},\cdot) \in 
    H^{1}(\Omega) \; 
    \vert \; \mathrm{c}(\mathbf{x},t) = \mathrm{c}^{\mathrm{p}} 
    (\mathbf{x},t) \; \mathrm{on} \; \Gamma^{\mathrm{D}} 
    \right\} \\
    \mathsf{W} &:= \left\{\mathrm{w}(\mathbf{x}) \in 
    H^{1}(\Omega) \; \vert \; \mathrm{w}(\mathbf{x}) = 0 
    \; \mathrm{on} \; \Gamma^{\mathrm{D}} 
    \right\}
  \end{align}
\end{subequations}
where $H^{1}(\Omega)$ is a standard Sobolev space 
\citep{Brezzi_Fortin}. For convenience, we shall 
denote the standard $L_2$ inner-product over a 
set $K$ as follows:
\begin{align}
  \left(a;b\right)_{K} \equiv \int_{K} a \cdot b 
  \; \mathrm{d} K 
\end{align}
The subscript $K$ will be dropped if the set is 
the entire spatial domain (i.e., $K = \Omega$). 

\subsubsection{Galerkin weak formulation}
The Galerkin formulation for the initial boundary 
value problem \eqref{Eqn:AD_PDE}--\eqref{Eqn:AD_IC} 
can be written as follows: Find $\mathrm{c}(\mathbf{x},t) \in 
\mathsf{C}_t$ such that we have 
\begin{align}
  \label{Eqn:AD_Galerkin}
  \left(\mathrm{w};\partial \mathrm{c}/\partial t\right)
  + \left(\mathrm{w};\mathrm{div}[\mathbf{v} \mathrm{c}] \right)
  + \left(\mathrm{grad}[\mathrm{w}];\mathbf{D}(\mathbf{x}) 
  \mathrm{grad}[\mathrm{c}]\right) + (\mathrm{w};\beta \mathrm{c} - f) 
  = \left(\mathrm{w} ; q^{\mathrm{p}}\right)_{\mathrm{\Gamma}^{\mathrm{N}}} 
  \; \forall \mathrm{w}(\mathbf{x}) \in \mathsf{W}
\end{align}
It is well-known that the Galerkin formulation may exhibit numerical 
instabilities (e.g., spurious node-to-node oscillations) for non-self-adjoint 
spatial operators like the advective-diffusive and advective-diffusive-reactive 
systems. The reason can be attributed to the presence of boundary layers 
and interior layers in the solutions of these systems when advection 
is more dominant than the diffusion and reaction processes. 
Designing stable numerical formulations for advection-diffusion and 
advection-diffusion-reaction problems is still an 
active area of research (e.g., see 
\citep{Turner_Nakshatrala_Hjelmstad_IJNMF_v66_2011, 
Franca_Hauke_Masud_CMAME_2006_v195_p1560,
Gresho_Sani_v1}). This paper is not concerned 
with developing new stabilized formulations.

In order to avoid spurious oscillations and obtain 
accurate numerical solutions, it is sufficient to 
have the element P\'eclet number to be smaller than 
unity. To put it differently, if the element P\'eclet 
number is greater than unity, the computational mesh 
may not be adequate to resolve the steep gradients 
due to boundary layers and internal layers, which 
are typical in the solutions of advection dominated 
problems. One can always achieve smaller values 
for the element P\'eclet number by refining the 
computational mesh adequately. However, in some 
cases, the mesh has to be so fine that it may be 
computationally prohibitive to employ such a mesh. 
In order to alleviate the deficiencies of the Galerkin 
formulation for advection-dominated problems, many 
alternative methods have been proposed in the 
literature. For example, see 
\citep{Augustin_Caiazzo_Fiebach_Fuhrmann_John_Linke_Umla_CMAME_2011} 
for a short description and comparison of these 
methods. In this paper, we shall employ 
the SUPG formulation \citep{Brooks_Hughes_CMAME_1982_v32_p199} 
and the GLS formulation 
\citep{Hughes_Franca_Hulbert_CMAME_1989_v73_p173}, which 
are two popular approaches employed to enhance 
the stability of the Galerkin formulation. 
For completeness and future reference, we now briefly 
outline these two stabilized formulations. 
 
\subsubsection{Streamline Upwind/Petrov-Galerkin (SUPG) weak formulation}
The SUPG formulation reads as follows: Find 
$c(\mathbf{x},t) \in \mathsf{C}_t$ such that 
we have
\begin{align}
  \label{Eqn:SUPG_Weak_Form}
  (\mathrm{w};\partial \mathrm{c} / \partial t) &+ (\mathrm{w};\mathrm{div}[\mathbf{v}\mathrm{c}]) 
  + (\mathrm{grad}[\mathrm{w}];\mathbf{D}(\mathbf{x}) \mathrm{grad}[\mathrm{c}]) 
  + (\mathrm{w};\beta \mathrm{c}) \nonumber \\ 
  &+ \sum_{e = 1}^{Nele} 
  \left(\tau_{\mathrm{SUPG}} \mathbf{v} \cdot 
  \mathrm{grad}[\mathrm{w}]; \partial \mathrm{c} / \partial t 
  + \mathrm{div} \left[\mathbf{v}\mathrm{c} - \mathbf{D}
    (\mathbf{x}) \mathrm{grad}[\mathrm{c}]\right] + \beta 
  \mathrm{c} - f\right)_{\Omega_e} \nonumber \\
  &= (\mathrm{w};f) + \left(\mathrm{w};q^{\mathrm{p}}\right)_{\mathrm{\Gamma}^{\mathrm{N}}} 
  \quad \forall \mathrm{w}(\mathbf{x}) \in \mathsf{W}
\end{align}
where $Nele$ is the number of elements, and 
$\tau_{\mathrm{SUPG}}$ is the stabilization 
parameter under the SUPG formulation. We shall 
use the stabilization parameter proposed in 
\citep{John_Knobloch_CMAME_2007_v196_p2197}: 
\begin{align}
  \label{Eqn:SUPG_Parameter}
  \tau_{\mathrm{SUPG}} = \frac{h_e}{2 \| \mathbf{v} \|} \xi_0 
  \left(P_{e}^{h}\right), \quad \xi_{0} \left(\chi \right) 
  = \coth \left( \chi\right) - \frac{1}{\chi}
\end{align}
where $h_e$ is the element length, and $\xi_0$ 
is known as the upwind function. Recall that 
$P_e^h$ is the local (element) P\'eclet number. 

\subsubsection{Galerkin/least-squares (GLS) weak formulation}
The GLS formulation reads as follows: Find 
$c(\mathbf{x},t) \in \mathsf{C}_t$ such 
that we have
\begin{align}
  \label{Eqn:GLS_weak_form}
  (\mathrm{w};\partial \mathrm{c} / \partial t) &+ (\mathrm{w};\mathrm{div}[\mathbf{v}\mathrm{c}]) 
  + (\mathrm{grad}[\mathrm{w}];\mathbf{D}(\mathbf{x}) 
  \mathrm{grad}[\mathrm{c}]) + (\mathrm{w};\beta \mathrm{c}) \nonumber \\ 
  &+ \sum_{e = 1}^{Nele} 
  \left( \mathrm{w} / \Delta t + \mathrm{div} [\mathbf{v}\mathrm{w} - \mathbf{D}
    (\mathbf{x}) \mathrm{grad}[\mathrm{w}]] + \beta \mathrm{w}; \tau_{\mathrm{GLS}} 
  \left( \partial \mathrm{c} / \partial t + \mathrm{div} \left[ \mathbf{v}\mathrm{c} 
    - \mathbf{D}(\mathbf{x}) \mathrm{grad}[\mathrm{c}]\right] + \beta \mathrm{c} - f
  \right) \right)_{\Omega_e} \nonumber \\
  &= (\mathrm{w};f) + \left(\mathrm{w} ; q^{\mathrm{p}}\right)_{\mathrm{\Gamma}^{\mathrm{N}}} 
  \quad \forall \mathrm{w}(\mathbf{x}) \in \mathsf{W}
\end{align}
where $\tau_{\mathrm{GLS}}$ is the stabilization parameter under 
the GLS formulation, and $\Delta t$ is the time-step. In this 
paper, we shall take $\tau_{\mathrm{GLS}} = \tau_{\mathrm{SUPG}}$, 
which is a common practice. It should be emphasized that an 
optimal choice of stabilization parameter for stabilized 
formulations in two- and three-dimensions is still an active 
area of research (e.g., see 
\citep{Augustin_Caiazzo_Fiebach_Fuhrmann_John_Linke_Umla_CMAME_2011}).

\section{PROPOSED MULTI-TIME-STEP COMPUTATIONAL FRAMEWORK}
\label{Sec:S3_First_Discrete}
The proposed multi-time-step computational framework 
is built upon the semi-discrete methodology 
\citep{Zienkiewicz} and the dual Schur domain decomposition 
method \citep{Toselli_DD}. The semi-discrete methodology 
converts the partial differential equations into a 
system of ordinary differential equations. For spatial 
discretization of the problem at hand, one can use either 
the Galerkin formulation or a stabilized formulation, which 
could depend on the relative strengths of transport 
processes and the decay coefficient due to chemical 
reactions. The dual Schur domain decomposition is 
an elegant way to handle decomposition of the 
computational domain into subdomains through 
Lagrange multipliers. 

\subsection{Domain decomposition and the resulting equations}
In order to facilitate multi-time-step coupling, 
we decompose the computational domain into 
$\mathcal{S}$ non-overlapping subdomains 
such that 
\begin{align}
  \overline{\Omega} = \bigcup_{i = 1}^{\mathcal{S}} 
    \overline{\Omega}_i \quad \mathrm{and} \quad  
    \Omega_i \cap \Omega_j = \emptyset \; 
    \mathrm{for} \; i \neq j
\end{align}
where a superposed bar denotes the set closure. 
The meshes in all subdomains are assumed to be 
conforming along the subdomain interface, see 
Figure \ref{Fig:dual_Schur}. 
We shall use signed Boolean matrices to write the 
compatibility constraints along the subdomain interface, 
as they provide a systematic way to write the interface 
constraints as a system of linearly independent equations. 
Moreover, the mathematical structure of the resulting 
equations is suitable for a mathematical analysis. The 
entries of a signed Boolean matrix are either -1, 0, or 1, 
and each row has at most one non-zero entry. However, 
it needs to be emphasized that a signed Boolean matrix is never 
constructed explicitly in a computer implementation, as it 
is computationally not efficient to store such a matrix. 
It should also be noted that signed Boolean matrices can 
handle constraints arising from cross-points, which are 
the points on the subdomain interface that are connected 
to more than two subdomains. For more details on signed 
Boolean matrices see 
\citep{Nakshatrala_Hjelmstad_Tortorelli_IJNME_2008_v75_p1385}.

In a time-continuous setting, the governing 
equations after spatial discretization can 
be written as follows:
\begin{subequations}
  \label{Eqn:DAE_Linear_Model}
  \begin{align}
  \label{Eqn:DAE_Linear_Model_Eq1}
    &\boldsymbol{M}_i \dot{\boldsymbol{c}}_i (t)
    + \boldsymbol{K}_i \boldsymbol{c}_i(t) = 
    \boldsymbol{f}_i (t) + \boldsymbol{C}_i^{\mathrm{T}} 
    \boldsymbol{\lambda} \left( t \right) \quad i = 
    1,\cdots, \mathcal{S} \\
  \label{Eqn:DAE_Linear_Model_Eq2}
    &\sum_{i = 1}^{\mathcal{S}} \boldsymbol{C}_i 
    \boldsymbol{c}_i \left( t \right) = 
    \boldsymbol{0}
  \end{align}
\end{subequations}
where a superposed dot denotes a derivative with respect 
to time, the subscript $i$ denotes the subdomain number, 
the nodal concentration vector of the $i$-th subdomain is 
denoted by $\boldsymbol{c}_i$, the capacity matrix of the 
$i$-th subdomain is denoted by $\boldsymbol{M}_i$, the 
transport matrix of the $i$-th subdomain is denoted by 
$\boldsymbol{K}_i$, $\boldsymbol{f}_i(t)$ is the forcing 
vector of the $i$-th subdomain, $\boldsymbol{\lambda}$ 
denotes the vector of Lagrange multipliers, and 
$\boldsymbol{C}_i$ denotes the signed Boolean matrix 
for the $i$-th subdomain.  
Let the number of degrees-of-freedom in the $i$-th 
subdomain be denoted by $N_i$, and the number 
of degrees-of-freedom on the subdomain interface be 
denoted by $N_{\lambda}$. The size of $\boldsymbol{c}_i$ 
is $N_i \times 1$, and both the capacity and transport 
matrices of the $i$-th subdomain will be of the size 
$N_i \times N_i$. The size of $\boldsymbol{\lambda}$ 
will be $N_{\lambda} \times 1$, and the size of the 
signed Boolean matrix $\boldsymbol{C}_i$ will be 
$N_{\lambda} \times N_i$. 

It is imperative to note that the governing equations 
\eqref{Eqn:DAE_Linear_Model_Eq1}--\eqref{Eqn:DAE_Linear_Model_Eq2}, 
which arise from domain decomposition, form a system of 
differential/algebraic equations (DAEs). For completeness 
and future reference we now present the necessary details 
about differential/algebraic equations.

\subsubsection{Differential/algebraic equations}
\label{Subsec:First_DAEs}
A differential/algebraic equation is an equation involving 
a set of independent variables, an unknown function of the 
independent variables, and derivatives of the functions 
with respect to the independent variables. Clearly, 
ordinary differential equations, and algebraic equations 
form subclasses of differential/algebraic equations. 
In this paper, we are concerned with first-order 
differential/algebraic equations. Mathematically, 
a DAE in first-order form takes the following general 
form:
\begin{align}
  \label{Eqn:DAE_general_form}
  \boldsymbol{w} \left(\dot{\boldsymbol{x}}(t),
  \boldsymbol{x}\left( t \right), t\right) = 
  \boldsymbol{0} \quad t \in \mathcal{I}
\end{align}
where $t$ is the independent variable, and $\boldsymbol{x}(t)$ 
is the unknown function. 
It is well-known that solving a system of 
differential/algebraic equations numerically 
can be more difficult than solving a system 
of ordinary differential equations \citep{Hairer_Wanner_V2,
Petzold_SIAMJSciStatComp_1982_v3_p367}. 
A notion which is popularly employed to 
measure the difficulty of obtaining numerical solutions 
to a particular DAE is the \emph{differential index}.
The differential index of a DAE is the number 
of times one has to take derivatives of equation 
\eqref{Eqn:DAE_general_form} in order to be able 
to derive an ODE by mere algebraic manipulations. 
It is obvious that a system of ODEs will have 
differential index of zero. A special form of 
DAEs which is of interest to us in this paper 
is the Hessenberg index-2 DAE. It has the 
following mathematical form:
\begin{subequations}
  \label{Eqn:Semi_Implicit_DAE}
  \begin{align}
    &\dot{\boldsymbol{x}} = \boldsymbol{p} 
    (\boldsymbol{x},\boldsymbol{y},t) \\
    &\boldsymbol{0} = \boldsymbol{q}(\boldsymbol{x})
  \end{align}
\end{subequations}
which consists of a system of ordinary differential equations 
along with a set of algebraic equations (i.e., constraints). 
This paper concerns with differential/algebraic equations 
of differential index two or lower. Many of the constrained 
mechanical systems can be modeled using DAEs (e.g., see 
\citep{Geradin_Cardona}). In the case of coupling 
algorithms, the compatibility of subdomains along 
the interfaces will appear as an algebraic constraint 
to the ODEs obtained from a finite element discretization.
It is not possible to solve differential/algebraic equations 
analytically unless in some very special cases. Hence, 
one has to resort to numerical solutions. In this paper, 
we shall restrict to time-stepping schemes from the 
trapezoidal family. However, the corresponding 
properties when applied to differential/algebraic 
equations can be different. For a detailed discussion 
on this topic see \citep{Hairer_Wanner_V2}.

\subsection{Time discretization}
\label{Sec:S3_Proposed_Coupling}
We now construct two multi-time-step coupling methods 
that can handle multiple subdomains, and can allow the 
use of different time-steps, different time-integrators 
and/or different numerical formulation in different 
subdomains. To this end, the time interval of interest 
is divided into non-overlapping intervals whose end 
points will be referred to as \emph{system time-levels}. 
The algebraic compatibility constraints will be 
enforced at the system time-levels. For convenience, 
we shall assume that the system time-levels are uniform. 
The $n$-th system time-level will be denoted by 
$t^{(n)}$ and can be written as follows: 
\begin{align}
  t^{(n)} = n \Delta t \quad n = 0, 1, \cdots, \mathcal{N}
\end{align}
where $\Delta t$ is called the \emph{system time-step}.
The numerical time-integration of each subdomain will
advance by the \emph{subdomain time-step}. The subdomain
time-step of the $i$-th subdomain will be denoted by
$\Delta t_i$. Note that $\Delta t \geq \Delta t_i \;
\forall i$. Furthermore, we shall assume that the
ratio between the system and subdomain time-step is
a natural number, and is denoted by $\eta_i$. That is, 
\begin{align}
  \eta_i = \frac{\Delta t}{\Delta t_i}
\end{align}
Figure \ref{Fig:Monolithic_multi_time_step_notation} 
presents a pictorial description of the system and 
subdomain time-steps. In the rest of the paper, we 
will use the following notation to show the value of 
a variable at a time-level:
\begin{align}
  &x^{\left( n + \frac{j}{\eta_i}\right)} = 
  x \left( t^{(n)} + j \Delta t_i\right) \\
  &t^{\left( n + \frac{j}{\eta_i}\right)} = 
  t^{(n)} + j \Delta t_i
\end{align}
Note that because of the enforcement of compatibility 
constraint at system time-levels only, the Lagrange
multipliers can only be calculated at system time-levels. 
We shall linearly interpolate the Lagrange multipliers 
within system time-levels. That is, 
\begin{align}
  \label{Eqn:Lagrange_Multiplier_Approx}
  \boldsymbol{\lambda}^{\left( n + \frac{j+1}{\eta_i}\right)}
  = \left( 1 - \frac{j+1}{\eta_i}\right) \boldsymbol{\lambda}^{(n)}
  + \left( \frac{j+1}{\eta_i}\right) \boldsymbol{\lambda}^{(n+1)}
\end{align}
As discussed earlier, coupling explicit and implicit 
time-stepping schemes is not straightforward in the 
case of first-order transient systems as compared with 
second-order systems. The proposed computational framework 
will employ different compatibility constraints in order to 
enforce continuity and to make an explicit/implicit coupling 
possible. 

\subsection{Mathematical statements of the proposed coupling methods}
The compatibility constraints along the subdomain interface 
will be enforced at system time-levels. Mathematically, the time 
discretization of compatibility constraints reads as follows: 
  \begin{alignat}{2}
    &\sum_{i = 1}^{\mathcal{S}} \boldsymbol{C}_i \boldsymbol{d}_i^{(n+1)} 
    = \boldsymbol{0} \quad \forall n && \quad \mbox{$d$-continuity method} \\
  %
    &\sum_{i = 1}^{\mathcal{S}} \boldsymbol{C}_i 
    \left( \boldsymbol{v}_i^{(n+1)}+ \frac{\alpha}{\Delta t}
    \boldsymbol{d}_i^{(n+1)}
    \right) = \boldsymbol{0} \quad \forall n &&\quad 
    \mbox{Baumgarte stabilization}
  \end{alignat}
where $\alpha > 0$ is the Baumgarte stabilization parameter. 
The proposed coupling method based on $d$-continuity
will read as follows:~Find $\left( \boldsymbol{v}^{\left( n + (j+1)/\eta_i\right)}_i, \boldsymbol{d}^{\left( n + (j+1)/\eta_i\right)}_i, 
\boldsymbol{\lambda}^{(n+1)} \right)$ for $n = 1 , ... , \mathcal{N}$; 
$j = 0 , ... , \eta_i - 1$; and $i = 1 , ... , \mathcal{S}$ such that 
we have
\begin{subequations}
\label{Eqn:Coupling_d_continuity}
\begin{align}
	&\boldsymbol{M}_i \boldsymbol{v}_i^{\left( n + \frac{j+1}{\eta_i}
	\right)} + \boldsymbol{K}_i \boldsymbol{d}_i^{\left( n + \frac{j+1}{\eta_i}\right)}= \boldsymbol{f}_i^{\left( n + \frac{j+1}{\eta_i}\right)}+ \boldsymbol{C}_i^{\mathrm{T}} \boldsymbol{\lambda}^{\left( n + \frac{j+1}{\eta_i} \right)} \\
	&\boldsymbol{d}_i^{\left( n + \frac{j+1}{\eta_i}\right)} = 
	\boldsymbol{d}_i^{\left( n + \frac{j}{\eta_i}\right)} + 
	\Delta t_i \left( \left( 1 - \vartheta_i\right)\boldsymbol{v}_i^{\left( n + \frac{j}{\eta_i}\right)} + \vartheta_i \boldsymbol{v}_i^{\left( n + \frac{j+1}{\eta_i}\right)}\right) \\
	&\boldsymbol{\lambda}^{\left( n + \frac{j+1}{\eta_i}\right)} = 
	\left( 1 - \frac{j+1}{\eta_i}\right) \boldsymbol{\lambda}^{(n)} +
	\left( \frac{j+1}{\eta_i}\right) \boldsymbol{\lambda}^{(n+1)} \\
	&\sum_{i = 1}^{S} \boldsymbol{C}_i \boldsymbol{d}_i^{(n + 1)} = \boldsymbol{0}
\end{align}
\end{subequations}
The proposed coupling method based on the Baumgarte 
stabilization will read as follows:~Find $\left( \boldsymbol{v}^{\left( n + (j+1)/\eta_i\right)}_i, \boldsymbol{d}^{\left( n + (j+1)/\eta_i\right)}_i, 
\boldsymbol{\lambda}^{(n+1)} \right)$ for $n = 1 , ... , \mathcal{N}$; 
$j = 0 , ... , \eta_i - 1$; and $i = 1 , ... , \mathcal{S}$ such that 
we have
\begin{subequations}
\label{Eqn:Coupling_Baumgarte}
\begin{align}
	&\boldsymbol{M}_i \boldsymbol{v}_i^{\left( n + \frac{j+1}{\eta_i}
	\right)} + \boldsymbol{K}_i \boldsymbol{d}_i^{\left( n + \frac{j+1}{\eta_i}\right)}= \boldsymbol{f}_i^{\left( n + \frac{j+1}{\eta_i}\right)}+ \boldsymbol{C}_i^{\mathrm{T}} \boldsymbol{\lambda}^{\left( n + \frac{j+1}{\eta_i} \right)} \\
	&\boldsymbol{d}_i^{\left( n + \frac{j+1}{\eta_i}\right)} = 
	\boldsymbol{d}_i^{\left( n + \frac{j}{\eta_i}\right)} + 
	\Delta t_i \left( \left( 1 - \vartheta_i\right)\boldsymbol{v}_i^{\left( n + \frac{j}{\eta_i}\right)} + \vartheta_i \boldsymbol{v}_i^{\left( n + \frac{j+1}{\eta_i}\right)}\right) \\
	&\boldsymbol{\lambda}^{\left( n + \frac{j+1}{\eta_i}\right)} = 
	\left( 1 - \frac{j+1}{\eta_i}\right) \boldsymbol{\lambda}^{(n)} +
	\left( \frac{j+1}{\eta_i}\right) \boldsymbol{\lambda}^{(n+1)} \\
	&\sum_{i = 1}^{S} \boldsymbol{C}_i \left( \boldsymbol{v}_i^{(n + 1)} + 
	\frac{\alpha}{\Delta t} \boldsymbol{d}_i^{(n + 1)}\right) = \boldsymbol{0}
\end{align}
\end{subequations}

Before we perform a systematic theoretical analysis of the 
proposed multi-time-step coupling methods in the next section, 
it needs to be mentioned that the quantity $\partial c/\partial t$ 
in the stabilization terms under the SUPG and GLS stabilized 
formulations (see equations 
\eqref{Eqn:SUPG_Parameter} and \eqref{Eqn:GLS_weak_form}) 
will be evaluated at the weighted time-level $n + (j+\vartheta_i)/\eta_i$ 
for the $i$-th subdomain. This implies that this quantity 
in the stabilization terms for the $i$-th subdomain needs to be 
calculated as follows:
\begin{align}
  \left. \frac{\partial c}{\partial t} \right|_{n+(j+\vartheta_i)/\eta_i} 
  \approx (1 - \vartheta_i) \boldsymbol{v}^{(n+j/\eta_i)} + \vartheta_i 
  \boldsymbol{v}^{(n+(j+1)/\eta_i)} = 
  \frac{\boldsymbol{d}^{(n + (j+1)/\eta_i)}-\boldsymbol{d}^{(n+j/\eta_i)}}{\Delta t_i}
\end{align}
This form of discretization will be crucial in 
proving the stability of the proposed coupling 
methods. 
More details on the implementation of the proposed 
coupling methods can be be found in Appendix.

\section{A THEORETICAL STUDY ON THE PROPOSED METHODS}
\label{Sec:S4_First_Theoretical}

\subsection{Notation} 
The jump and average operators over the $i$-th 
subdomain time-step are, respectively, defined 
as follows:
\begin{subequations}
  \begin{align}
    \left[x^{\left( n + \frac{j}{\eta_i} \right)}\right]_i 
    &:= x^{\left( n + \frac{j + 1}{\eta_i}\right)} - 
    x^{\left( n + \frac{j}{\eta_i}\right)} \\
    \left\langle x^{\left(n + \frac{j}{\eta_i} \right)}\right\rangle_i
    &:= \frac{1}{2} \left( x^{\left( n + \frac{j+1}{\eta_i}\right)} + 
    x^{\left( n + \frac{j}{\eta_i} \right)}\right)
  \end{align}
\end{subequations}
One can similarly define the jump and average 
operators over a system time-step as follows:
\begin{subequations}
  \label{Eqn:Jum_Average_system}
  \begin{align}
    &\left\llbracket x^{(n)}\right\rrbracket :=
    x^{(n+1)} - x^{(n)} = \sum_{j = 0}^{\eta_i - 1} 
    \left[ x^{\left( n + \frac{j}{\eta_i}\right)}\right]_i \\
    &\left\llangle x^{(n)}\right\rrangle :=
    \frac{1}{2} \left( x^{(n+1)} + x^{(n)}\right) 
  \end{align}
\end{subequations}
Let $\boldsymbol{S}$ be a symmetric matrix, 
then we have the following identity: 
\begin{align}
  \left\llangle \boldsymbol{x} \right\rrangle^{\mathrm{T}} 
  \boldsymbol{S} \llbracket \boldsymbol{x} \rrbracket 
  = \frac{1}{2} \llbracket\boldsymbol{x}^{\mathrm{T}} 
  \boldsymbol{S} \boldsymbol{x}\rrbracket
\end{align}
The trapezoidal family of time-stepping 
schemes applied over a subdomain time-step 
can be compactly written as follows: 
\begin{align}
  \label{Eqn:Trapezoidal_jump_d}
  \left[ \boldsymbol{d}_i^{\left( n + \frac{j}{\eta_i}\right)}\right]_i
  = \Delta t_i \left( \left\langle \boldsymbol{v}_i^{\left( n + \frac{j}{\eta_i}\right)}\right\rangle_i + \left( \vartheta_i - \frac{1}{2}\right) 
  \left[ \boldsymbol{v}_i^{\left( n + \frac{j}{\eta_i}\right)}\right]_i\right)
\end{align}

\subsection{Stability analysis}
Consistency of the proposed coupling methods is 
trivial by construction. Hence, for convergence, 
it is necessary and sufficient to show that the 
proposed coupling methods are stable. We now show 
that both the proposed coupling methods are indeed 
stable using the energy method \citep{Wood}. 
For numerical stability analysis, it is common 
to assume that supply function is zero. Therefore, 
we take $\boldsymbol{f}_i(t) = \boldsymbol{0}$ in 
all the subdomains.
Before we can provide stability proofs for the proposed coupling 
methods, we need to present an important property that the 
transport matrices enjoy under the three weak formulations 
that were outlined in the previous section. This property 
will play a crucial role in the stability analysis. We 
provide a proof for the Galerkin weak formulation.

\begin{lemma}
  \label{Thm:Nonsymmetric_K}
  Consider the Galerkin weak formulation given by 
  equation \eqref{Eqn:AD_Galerkin}. If the advection 
  velocity satisfies $\mathrm{div}[\mathbf{v}] 
  \geq 0$, and the diffusivity tensor $\mathbf{D}
  (\mathbf{x})$ is symmetric and positive definite, 
  then the symmetric part of the transport matrix 
  resulting from the finite element discretization 
  will be positive semi-definite.	
\end{lemma}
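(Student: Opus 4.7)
The plan is to exploit the correspondence between the transport matrix $\boldsymbol{K}$ and the bilinear form underlying the Galerkin weak formulation \eqref{Eqn:AD_Galerkin}. Writing $c^h(\mathbf{x}) = \sum_{A} c_A N_A(\mathbf{x})$ for basis functions $\{N_A\}$ spanning the discrete subspace of $\mathsf{W}$, the matrix entry $K_{AB}$ is exactly $(N_A; \mathrm{div}[\mathbf{v} N_B]) + (\mathrm{grad}[N_A]; \mathbf{D}\,\mathrm{grad}[N_B]) + (N_A; \beta N_B)$. Consequently, the quadratic form associated with $\boldsymbol{K}$ satisfies
\begin{align*}
\boldsymbol{c}^{\mathrm{T}} \boldsymbol{K} \boldsymbol{c}
= \big(c^h;\mathrm{div}[\mathbf{v}\,c^h]\big)
+ \big(\mathrm{grad}[c^h];\mathbf{D}\,\mathrm{grad}[c^h]\big)
+ \big(c^h;\beta c^h\big),
\end{align*}
and since the symmetric part of $\boldsymbol{K}$ produces the same quadratic form, positive semi-definiteness of $\tfrac{1}{2}(\boldsymbol{K}+\boldsymbol{K}^{\mathrm{T}})$ is equivalent to non-negativity of the right-hand side for every admissible $c^h$.

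First I would dispatch the easy pieces. The diffusion contribution $(\mathrm{grad}[c^h]; \mathbf{D}\,\mathrm{grad}[c^h])$ is non-negative by the assumed symmetry and positive-definiteness of $\mathbf{D}(\mathbf{x})$. The reaction contribution $(c^h; \beta c^h) = \beta\|c^h\|_{L^2}^2$ is non-negative because $\beta \geq 0$. The remaining work is the advective term.

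For the advection piece I would apply the product rule followed by the divergence theorem. Using $\mathrm{div}[\mathbf{v} c^h] = c^h\,\mathrm{div}[\mathbf{v}] + \mathbf{v}\cdot\mathrm{grad}[c^h]$ and the identity $c^h\,\mathbf{v}\cdot\mathrm{grad}[c^h] = \tfrac{1}{2}\mathbf{v}\cdot\mathrm{grad}[(c^h)^2]$, integration by parts produces
\begin{align*}
\big(c^h;\mathrm{div}[\mathbf{v}\,c^h]\big)
= \frac{1}{2}\int_{\Omega} (c^h)^2\,\mathrm{div}[\mathbf{v}]\,\mathrm{d}\Omega
+ \frac{1}{2}\int_{\partial \Omega} (c^h)^2\,\mathbf{v}\cdot\widehat{\mathbf{n}}\,\mathrm{d}\Gamma.
\end{align*}
Because $c^h$ vanishes on $\Gamma^{\mathrm{D}}$, the boundary integral reduces to $\Gamma^{\mathrm{N}}$. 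The first term is non-negative by the hypothesis $\mathrm{div}[\mathbf{v}]\geq 0$, which handles the bulk contribution. Assembling the three pieces then yields $\boldsymbol{c}^{\mathrm{T}}\boldsymbol{K}\boldsymbol{c}\geq 0$, which is precisely the claim.

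The main obstacle I anticipate is the advective boundary integral on $\Gamma^{\mathrm{N}}$: without a sign condition on $\mathbf{v}\cdot\widehat{\mathbf{n}}$ at the Neumann boundary, the term $\tfrac{1}{2}\int_{\Gamma^{\mathrm{N}}} (c^h)^2\,\mathbf{v}\cdot\widehat{\mathbf{n}}\,\mathrm{d}\Gamma$ could in principle be negative. I would expect the proof to either (i) implicitly restrict to boundary data for which $\Gamma^{\mathrm{N}}$ coincides with an outflow boundary, so $\mathbf{v}\cdot\widehat{\mathbf{n}}\geq 0$, or (ii) absorb this term into the right-hand-side force vector during assembly so that it does not appear in $\boldsymbol{K}$ at all. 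Clarifying precisely which of these conventions the authors are using is the only non-routine step; the remainder of the argument is a direct energy computation, and it also makes clear how to extend the proof to the SUPG and GLS weak forms by bounding the additional residual-based stabilization contributions (which is presumably how they justify the lemma for the stabilized formulations).
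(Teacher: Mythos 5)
Your argument is correct and rests on the same core identity as the paper's proof: symmetrizing the advective term produces $\tfrac{1}{2}\mathrm{div}[\mathbf{v}]$ as a zeroth-order coefficient, after which the hypotheses $\mathrm{div}[\mathbf{v}]\geq 0$ and $\mathbf{D}$ symmetric positive definite do all the work. The packaging differs: the paper computes the formal adjoint $\mathcal{L}^{*}$ of the continuous spatial operator, forms the symmetric operator $\widetilde{\mathcal{L}}=\tfrac{1}{2}(\mathcal{L}+\mathcal{L}^{*})$, and writes down the corresponding \emph{element} matrix, whereas you work globally with the quadratic form $\boldsymbol{c}^{\mathrm{T}}\boldsymbol{K}\boldsymbol{c}$ and integrate by parts once. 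The substantive difference is the boundary term you flag. The paper's element-level computation silently discards the surface integrals generated in passing from $\mathcal{L}$ to $\mathcal{L}^{*}$; upon assembly the inter-element contributions cancel, but the term
\begin{equation*}
\frac{1}{2}\int_{\Gamma^{\mathrm{N}}} (c^{h})^{2}\,\mathbf{v}\cdot\widehat{\mathbf{n}}\;\mathrm{d}\Gamma
\end{equation*}
survives (the $\Gamma^{\mathrm{D}}$ part vanishes because the test functions do), and it is genuinely part of the symmetric part of the assembled $\boldsymbol{K}$ since the weak form \eqref{Eqn:AD_Galerkin} keeps the advection term in the form $(\mathrm{w};\mathrm{div}[\mathbf{v}\mathrm{c}])$ without integrating it by parts. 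So the lemma as stated (and as proved in the paper) implicitly assumes either $\mathbf{v}\cdot\widehat{\mathbf{n}}\geq 0$ on $\Gamma^{\mathrm{N}}$ (outflow Neumann boundary) or $\Gamma^{\mathrm{N}}$ characteristic/empty; your identification of this hidden hypothesis is a genuine improvement on the paper's argument, and your option (ii) does not apply here because the advective flux is not moved to the load vector in \eqref{Eqn:AD_Galerkin}. Everything else in your write-up, including the reduction of the diffusive and reactive pieces, matches the paper.
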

\begin{proof}
  Let us denote the spatial operator of the 
  advective-diffusive system as follows: 
  \begin{align}
    \mathcal{L}[\mathrm{c}] := \mathrm{div}[\mathbf{v} \mathrm{c}] 
    - \mathrm{div}[\mathbf{D}(\mathbf{x}) 
      \mathrm{grad}[\mathrm{c}]]
  \end{align}
  It is easy to show that the adjoint of the 
  spatial operator takes the following form:
  \begin{align}
    \mathcal{L}^{*}[\mathrm{c}] = - \mathbf{v} \cdot \mathrm{grad}[\mathrm{c}] 
    - \mathrm{div}\left[\mathbf{D}^{\mathrm{T}}(\mathbf{x}) 
      \mathrm{grad}[\mathrm{c}]\right]
  \end{align}
  Noting the symmetry of diffusivity tensor, the 
  symmetric part of the spatial operator takes 
  the following form: 
  \begin{align}
    \widetilde{\mathcal{L}}[\mathrm{c}] = \frac{\mathcal{L}[\mathrm{c}] 
      + \mathcal{L}^{*}[\mathrm{c}]}{2} = \frac{1}{2} \mathrm{div}
              [\mathbf{v}] \mathrm{c} - \mathrm{div}[\mathbf{D}
                (\mathbf{x})\mathrm{grad}[\mathrm{c}]] 
  \end{align}
  The coefficient (i.e., ``stiffness'') matrix corresponding 
  to the operator $\widetilde{\mathcal{L}}[\mathrm{c}]$ over a finite 
  element $\Omega_e$ can be written as follows: 
  \begin{align}
    \boldsymbol{K}_{e} = \int_{\Omega_e} \frac{1}{2} 
    \mathrm{div}[\mathbf{v}] \boldsymbol{N}^{\mathrm{T}} 
    (\mathbf{x}) \boldsymbol{N}(\mathbf{x}) \; \mathrm{d} 
    \Omega + \int_{\Omega_e} \boldsymbol{B}(\mathbf{x}) 
    \mathbf{D}(\mathbf{x}) \boldsymbol{B}^{\mathrm{T}}
    (\mathbf{x}) \; \mathrm{d} \Omega 
  \end{align}
  where $\boldsymbol{N}\left( \mathbf{x}\right)$ is 
  the row vector containing shape functions, and 
  $\boldsymbol{B} \left( \mathbf{x}\right)$ is the matrix 
  containing the derivatives of shape functions with 
  respect to $\mathbf{x}$.
  Since $\mathrm{div}[\mathbf{v}] \geq 0$ and 
  $\mathbf{D}(\mathbf{x})$ is positive definite, 
  the matrix $\boldsymbol{K}_e$ will be positive 
  semi-definite. Since $\mathbf{D}(\mathbf{x})$ is 
  symmetric, the matrix $\boldsymbol{K}_e$ is 
  symmetric. The assembly procedure preserves 
  the positive semi-definiteness when the local 
  matrices are mapped to a global matrix. 
\end{proof}
One can similarly show that the symmetric part of the transport 
matrix under the GLS formulation is also positive semi-definite. 
On the other hand, the symmetric part of the transport matrix 
under the SUPG formulation will be positive semi-definite only 
if the diffusivity tensor is constant, and low-order simplicial 
elements (e.g, two-node element, three-node triangle element, 
four-node tetrahedron element) are employed.

\begin{theorem}[\textsf{Stability of the 
      $d$-continuity coupling method}]
\label{Theorem:First_d_continuity}
Under the proposed multi-time-step method with 
$d$-continuity, the rate variables 
$\boldsymbol{v}_i$ will remain bounded if
$1/2\leq\vartheta_i\leq 1 \; \forall i$.
\end{theorem}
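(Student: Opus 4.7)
The plan is to run an energy argument at the level of the discrete rate equation and pin down the right test function so the Lagrange multiplier contribution is killed by the $d$-continuity constraint. Since the compatibility is enforced only at system time-levels, I cannot hope to cancel the Lagrange multiplier at each sub-step; instead I would differentiate the discrete ODE between successive sub-steps and work with the resulting jump equation, so that a single pairing with $\sum_i\boldsymbol{C}_i[\![\boldsymbol{d}_i]\!] = 0$ does the job.

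Concretely, for each subdomain $i$ and each $j = 0,\dots,\eta_i - 1$, I would subtract the discrete equation of motion at $(n+j/\eta_i)$ from the one at $(n+(j+1)/\eta_i)$ to obtain
\begin{align*}
\boldsymbol{M}_i\bigl[\boldsymbol{v}_i^{(n+j/\eta_i)}\bigr]_i + \boldsymbol{K}_i\bigl[\boldsymbol{d}_i^{(n+j/\eta_i)}\bigr]_i = \boldsymbol{C}_i^{\mathrm{T}}\bigl[\boldsymbol{\lambda}^{(n+j/\eta_i)}\bigr]_i.
\end{align*}
I would then left-multiply by $[\boldsymbol{d}_i]_i^{\mathrm{T}}$, substitute the trapezoidal identity \eqref{Eqn:Trapezoidal_jump_d} into the mass term, and use the symmetry identity for $\boldsymbol{M}_i$ to rewrite the mass contribution as $\tfrac{\Delta t_i}{2}[\boldsymbol{v}_i^{\mathrm{T}}\boldsymbol{M}_i\boldsymbol{v}_i]_i + \Delta t_i(\vartheta_i - \tfrac{1}{2})[\boldsymbol{v}_i]_i^{\mathrm{T}}\boldsymbol{M}_i[\boldsymbol{v}_i]_i$. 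By Lemma \ref{Thm:Nonsymmetric_K}, the transport contribution satisfies $[\boldsymbol{d}_i]_i^{\mathrm{T}}\boldsymbol{K}_i[\boldsymbol{d}_i]_i = [\boldsymbol{d}_i]_i^{\mathrm{T}}\boldsymbol{K}_i^{\mathrm{sym}}[\boldsymbol{d}_i]_i \geq 0$. Summing over $j$ telescopes the mass term to $\tfrac{\Delta t_i}{2}[\![\boldsymbol{v}_i^{\mathrm{T}}\boldsymbol{M}_i\boldsymbol{v}_i]\!]$, while the linear interpolation \eqref{Eqn:Lagrange_Multiplier_Approx} gives $[\boldsymbol{\lambda}]_i = \tfrac{1}{\eta_i}[\![\boldsymbol{\lambda}]\!]$ and hence collapses the Lagrange multiplier sum to $\tfrac{1}{\eta_i}[\![\boldsymbol{\lambda}]\!]^{\mathrm{T}}\boldsymbol{C}_i[\![\boldsymbol{d}_i]\!]$.

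Finally, I would multiply through by $\eta_i$ (so that $\eta_i\Delta t_i = \Delta t$ produces a uniform system-level factor and the Lagrange multiplier prefactor becomes unity) and sum over subdomains. The right-hand side becomes $[\![\boldsymbol{\lambda}]\!]^{\mathrm{T}}\sum_i\boldsymbol{C}_i[\![\boldsymbol{d}_i]\!]$, which vanishes because the $d$-continuity method enforces $\sum_i\boldsymbol{C}_i\boldsymbol{d}_i^{(n)} = \boldsymbol{0}$ at every system time-level. For $\vartheta_i \geq 1/2$ the two leftover terms ($\propto (\vartheta_i-\tfrac{1}{2})$ on the mass side and the $\boldsymbol{K}_i^{\mathrm{sym}}$ quadratic form) are non-negative, so I obtain $\sum_i [\![\boldsymbol{v}_i^{\mathrm{T}}\boldsymbol{M}_i\boldsymbol{v}_i]\!] \leq 0$. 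Positive definiteness of each $\boldsymbol{M}_i$ then gives a uniform-in-$n$ bound on $\boldsymbol{v}_i^{(n)}$. The delicate step — and the main obstacle — is the choice of test function: because the subdomains march with different $\Delta t_i$, a naive pairing leaves unbalanced $1/\eta_i$ factors that prevent the interface term from being absorbed by the system-level constraint; the scaling by $\eta_i$ is what reconciles the sub-step dynamics with the system-level compatibility, and the use of Lemma \ref{Thm:Nonsymmetric_K} is what makes this possible despite the non-self-adjoint spatial operator.
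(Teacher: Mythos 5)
Your proposal is correct and follows essentially the same route as the paper's proof: jump the discrete equations over each subdomain sub-step, test with $[\boldsymbol{d}_i]_i$, invoke the trapezoidal identity \eqref{Eqn:Trapezoidal_jump_d} together with Lemma \ref{Thm:Nonsymmetric_K} to discard the semi-definite remainders when $\vartheta_i \geq 1/2$, and let the jump form of the $d$-continuity constraint annihilate the Lagrange-multiplier work after summing over $j$ and $i$. The only (harmless) cosmetic differences are that you discard the entire $\mathrm{sym}[\boldsymbol{K}_i]$ contribution and so telescope the energy $\boldsymbol{v}_i^{\mathrm{T}}\boldsymbol{M}_i\boldsymbol{v}_i$, whereas the paper retains part of it in $\boldsymbol{Q}_i = \boldsymbol{M}_i + 2(\vartheta_i - \tfrac{1}{2})\Delta t_i\,\mathrm{sym}[\boldsymbol{K}_i]$, and that you rescale by $\eta_i$ where the paper divides by $\Delta t_i$ to obtain the uniform $1/\Delta t$ prefactor.
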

\begin{proof}
  Using the notation introduced earlier, one can write: 
  \begin{subequations}
    \begin{align}
      \label{Eqn:S4_d_con_Stability_GE}
      &\boldsymbol{M}_i \left[ \boldsymbol{v}_i^{\left( n + \frac{j}{\eta_i}\right)} 
        \right]_i + \boldsymbol{K}_i 
      \left[ \boldsymbol{d}_i^{\left( n + \frac{j}{\eta_i}\right)} \right]_i
      = \frac{1}{\eta_i}\boldsymbol{C}_i^{\mathrm{T}} \left \llbracket \boldsymbol{\lambda}^{\left( n \right)} \right \rrbracket \\
      \label{Eqn:S4_d_con_Stability_Constraint}
      &\sum_{i = 1}^{\mathcal{S}} \boldsymbol{C}_i 
      \left\llbracket \boldsymbol{d}_i^{\left( n \right)} 
      \right\rrbracket = \boldsymbol{0}
\end{align}
\end{subequations}
where interpolation of Lagrange multipliers
using a first-order polynomial is used. 
For convenience, let us denote:
\begin{align}
  \boldsymbol{Q}_i := \boldsymbol{M}_i 
  + 2 \left(\vartheta_i - \frac{1}{2}\right) 
  \Delta t_i \; \mathrm{sym}\left[\boldsymbol{K}_i
    \right] 
\end{align}
Clearly, the matrix $\boldsymbol{Q}_i$ is symmetric, 
as the matrix $\boldsymbol{M}_i$ is symmetric. Since 
the matrix $\boldsymbol{M}_i$ is positive definite, 
the symmetric part of $\boldsymbol{K}_i$ is positive 
semi-definite, $\vartheta_i \geq 1/2$, and $\Delta t_i 
> 0$; one can conclude that the matrix $\boldsymbol{Q}_i$ 
is positive definite. 

Premultiplying both sides of equation 
\eqref{Eqn:S4_d_con_Stability_GE} by $\left[ 
\boldsymbol{d}_i^{\left( n + \frac{j}{\eta_i}\right)}\right]_i$ 
and using equation \eqref{Eqn:Trapezoidal_jump_d}, gives
the following equation:
\begin{align}
  \label{Eqn:S4_d_con_Stability_Int_1}
  \left\langle \boldsymbol{v}_i^{\left( n + \frac{j}{\eta_i}\right)}
  \right \rangle_i^{\mathrm{T}} \boldsymbol{Q}_i
  \left[ \boldsymbol{v}_i^{\left( n + \frac{j}{\eta_i}\right)}\right]_i 
  &+\left(\vartheta_i - \frac{1}{2}\right) 
  \left[ \boldsymbol{v}_i^{\left( n + \frac{j}{\eta_i}\right)}\right]_i^{\mathrm{T}} 
  \left(\boldsymbol{M}_i + (\vartheta_i - \frac{1}{2}) \Delta t_i 
  \; \mathrm{sym}\left[\boldsymbol{K}_i\right]\right)
  \left[ \boldsymbol{v}_i^{\left( n + \frac{j}{\eta_i}\right)}\right]_i \nonumber \\
  &+ \Delta t_i \left \langle \boldsymbol{v}_i^{\left( n + \frac{j}{\eta_i}\right)} 
  \right \rangle_i^{\mathrm{T}} \mathrm{sym}\left[\boldsymbol{K}_i\right] 
  \left \langle \boldsymbol{v}_i^{\left( n + \frac{j}{\eta_i}\right)} \right 
  \rangle_i 
  = \frac{1}{\Delta t} \left \llbracket 
  \boldsymbol{\lambda}^{\left( n \right)} \right \rrbracket^{\mathrm{T}} 
  \boldsymbol{C}_i \left[ \boldsymbol{d}_i^{\left( n + \frac{j}{\eta_i}\right)}
    \right]_i
\end{align}
Since the symmetric part of $\boldsymbol{K}_i$ is 
positive semi-definite, and $\Delta t_i > 0$, we 
have the following inequality: 
\begin{align}
  \left\langle \boldsymbol{v}_i^{\left( n + \frac{j}{\eta_i}\right)}
  \right \rangle_i^{\mathrm{T}} \boldsymbol{Q}_i
  \left[\boldsymbol{v}_i^{\left( n + \frac{j}{\eta_i}\right)}\right]_i 
  &+\left(\vartheta_i - \frac{1}{2}\right) 
  \left[ \boldsymbol{v}_i^{\left( n + \frac{j}{\eta_i}\right)}\right]_i^{\mathrm{T}} 
  \left(\boldsymbol{M}_i + (\vartheta_i - \frac{1}{2}) \Delta t_i 
  \; \mathrm{sym}\left[\boldsymbol{K}_i\right]\right)\left[ \boldsymbol{v}_i^{\left( n + \frac{j}{\eta_i}\right)}\right]_i \nonumber \\
  &\leq \frac{1}{\Delta t} \left \llbracket 
  \boldsymbol{\lambda}^{\left( n \right)} \right \rrbracket^{\mathrm{T}} 
  \boldsymbol{C}_i \left[ \boldsymbol{d}_i^{\left( n + \frac{j}{\eta_i}\right)}
    \right]_i
\end{align}
The matrices $\boldsymbol{M}_i$ and 
$\mathrm{sym} \left[ \boldsymbol{K}_i\right]$ are positive definite
and semidefinite respectively. In addition to that, if one
has $\vartheta_i \geq 1/2 \; \forall i$, then the following
inequality can be derived:
\begin{align}
  \left\langle \boldsymbol{v}_i^{\left( n + \frac{j}{\eta_i}\right)}
  \right \rangle_i^{\mathrm{T}} \boldsymbol{Q}_i 
  \left[ \boldsymbol{v}_i^{\left( n + \frac{j}{\eta_i}\right)}\right]_i 
  \leq \frac{1}{\Delta t} \left \llbracket 
  \boldsymbol{\lambda}^{\left( n \right)} \right \rrbracket^{\mathrm{T}} 
  \boldsymbol{C}_i \left[ \boldsymbol{d}_i^{\left( n + \frac{j}{\eta_i}\right)}
    \right]_i
\end{align}
Summing over all the subdomain time levels
(i.e., summing over $j$),
subdomains (i.e., summing over $i$),  
and using equation \eqref{Eqn:S4_d_con_Stability_Constraint}
will give:
\begin{align}
  \sum_{i=1}^{\mathcal{S}} \sum_{j=0}^{\eta_i - 1}
  \left\langle \boldsymbol{v}_i^{\left( n + \frac{j}{\eta_i}\right)}
  \right \rangle_i^{\mathrm{T}} \boldsymbol{Q}_i 
  \left[ \boldsymbol{v}_i^{\left( n + \frac{j}{\eta_i}\right)}\right]_i 
  \leq 0 
\end{align}
Since the matrices $\boldsymbol{Q}_i$ are symmetric, 
the above inequality can be rewritten as follows:
\begin{align}
  \sum_{i=1}^{\mathcal{S}} \sum_{j=0}^{\eta_i - 1}
  \left[ \left(\boldsymbol{v}_i^{\left( n + \frac{j}{\eta_i}\right)}
  \right)^{\mathrm{T}} \boldsymbol{Q}_i 
    \boldsymbol{v}_i^{\left( n + \frac{j}{\eta_i}\right)}\right]_i 
  \leq 0 
\end{align}
By executing the telescopic summation, 
we obtain the following: 
\begin{align}
  \sum_{i=1}^{\mathcal{S}} 
  \left\llbracket \left(\boldsymbol{v}_i^{(n)}
  \right)^{\mathrm{T}} \boldsymbol{Q}_i 
  \boldsymbol{v}_i^{(n)}\right\rrbracket 
  \leq 0 
\end{align}
This further implies that 
\begin{align}
  \sum_{i = 1}^{\mathcal{S}} {\boldsymbol{v}_i^{(n)}}^{\mathrm{T}} 
  \boldsymbol{Q}_i \boldsymbol{v}_i^{(n)} \leq 
  \sum_{i = 1}^{\mathcal{S}} {\boldsymbol{v}_i^{(n-1)}}^{\mathrm{T}} 
  \boldsymbol{Q}_i \boldsymbol{v}_i^{(n-1)} \leq \cdots 
  \leq \sum_{i = 1}^{\mathcal{S}} {\boldsymbol{v}_i^{(0)}}^{\mathrm{T}} 
  \boldsymbol{Q}_i \boldsymbol{v}_i^{(0)}
\end{align}
Boundedness of $\boldsymbol{v}_i^{(0)}$ 
and positive definiteness of matrices $\boldsymbol{Q}_i$ ($i = 1, \cdots, 
\mathcal{S}$) concludes the boundedness of $\boldsymbol{v}_i^{(n)}$, in
all subdomains and at all time-levels.
\end{proof}

\begin{remark}
  One cannot relax the condition $\vartheta_i 
  \geq 1/2$ under the coupling method based 
  on the $d$-continuity method. It should be 
  noted that one would obtain numerical instability 
  if this condition is violated. This will be the case 
  even if one does not employ subcycling 
  \citep{Nakshatrala_Prakash_Hjelmstad_JCP_2009_v228_p7957}. 
  However, the main advantage of employing the coupling 
  method based on the $d$-continuity is that one can choose 
  any system time-step and subdomain time-step, and still 
  achieve numerical stability.  
\end{remark}

We now assess the stability of the proposed coupling 
method based on the Baumgarte stabilization. We are 
able to construct a proof only for the case in which 
the matrices $\boldsymbol{K}_i$ are symmetric. This 
means that the proof does not hold for the case in which 
advection is present. However, the numerical results 
presented in a subsequent section show that the coupling 
method based on the Baumgarte stabilization provide 
stable solutions even in the presence of advection. 
It is therefore a good research problem to theoretically 
assess the stability of the coupling method based on the 
Baumgarte stabilization in the presence of advection. 

\begin{theorem}[\textsf{Stability of the proposed 
    method with Baumgarte stabilization}]
\label{Thm:S4_Stability_Baumgarte}
Under the proposed multi-time-step method with Baumgarte 
stabilization, the rate variables $\boldsymbol{v}_i$, will
remain bounded if one chooses $\Delta t_i \leq \Delta 
t_i^{\mathrm{critical}}$ and $\alpha \leq \alpha_{\mathrm{max}}$ 
where 
\begin{subequations}
  \label{Eqn:Baum_Stab}
  \begin{align}
    &\Delta t_i^{\mathrm{critical}} := \left\{ \begin{array}{c}
      \frac{2}{\left( 1 - 2 \vartheta_i\right) \omega_i} \quad
      \text{if} \quad 0 \leq \vartheta_i < 1/2 \\
      +\infty \quad \quad \quad \text{if} \quad 1/2 \leq 
      \vartheta_i \leq 1 \end{array} \right. \\
    &\alpha_{\mathrm{max}} := \left\{ \begin{array}{c}
      \min \left\{ \frac{2 \eta_i}{1 - 2\vartheta_i} : 0 
      \leq \vartheta_i < 1/2\right \}\\
      +\infty \quad \text{if} \quad 1/2 \leq \vartheta_i 
      \leq 1 \; \forall i
    \end{array}\right. 
\end{align}
\end{subequations}
and $\omega_i = \mathrm{max}
\left\{ \omega \; : \; \mathrm{det} \left( \omega 
\boldsymbol{I}_i - \boldsymbol{M}_i^{-1} 
\boldsymbol{K}_i\right) = 0\right\}.$ 
It is assumed that the matrices $\boldsymbol{K}_i \; 
(i = 1, \cdots, \mathcal{S})$ are symmetric and 
positive semi-definite. 
\end{theorem}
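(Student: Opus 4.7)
\emph{Approach.} The plan is to adapt the energy-method argument of Theorem~\ref{Theorem:First_d_continuity} to the Baumgarte-stabilized coupling. The assumption that $\boldsymbol{K}_i$ is symmetric and positive semi-definite is essential: it makes a clean Lyapunov quadratic form available and allows invocation of the spectral bound $\boldsymbol{K}_i \preceq \omega_i \boldsymbol{M}_i$ that drives the CFL-type restriction $\Delta t_i \leq \Delta t_i^{\mathrm{critical}}$. The essential novelty relative to Theorem~\ref{Theorem:First_d_continuity} is that the algebraic constraint now controls the system-level jump of $\boldsymbol{v}_i + (\alpha/\Delta t)\boldsymbol{d}_i$ rather than the jump of $\boldsymbol{d}_i$ alone, so the test quantity used to annihilate the Lagrange-multiplier contribution must change accordingly.

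\emph{Incremental form and the test multiplier.} First I would write the subdomain equation in incremental form $\boldsymbol{M}_i [\boldsymbol{v}_i^{(n+j/\eta_i)}]_i + \boldsymbol{K}_i [\boldsymbol{d}_i^{(n+j/\eta_i)}]_i = (1/\eta_i)\boldsymbol{C}_i^{\mathrm{T}}\llbracket\boldsymbol{\lambda}^{(n)}\rrbracket$, and then subtract the Baumgarte constraint taken at levels $n$ and $n+1$ to obtain $\sum_i \boldsymbol{C}_i(\llbracket\boldsymbol{v}_i^{(n)}\rrbracket + (\alpha/\Delta t)\llbracket\boldsymbol{d}_i^{(n)}\rrbracket) = \boldsymbol{0}$. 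The incremental equation is then divided by $\Delta t_i$ (so that the right-hand-side coefficient becomes the common factor $1/\Delta t$ rather than the subdomain-dependent $1/\eta_i$), premultiplied by the row vector $\left([\boldsymbol{v}_i^{(n+j/\eta_i)}]_i + (\alpha/\Delta t)[\boldsymbol{d}_i^{(n+j/\eta_i)}]_i\right)^{\mathrm{T}}$, summed over $j = 0, \ldots, \eta_i - 1$ (which converts each subdomain jump into the system jump $\llbracket\cdot\rrbracket$), and finally summed over $i$; by the modified Baumgarte constraint the right-hand side vanishes identically.

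\emph{Energy identity.} Expanding the left-hand side with the trapezoidal identity~\eqref{Eqn:Trapezoidal_jump_d} and using the symmetry of $\boldsymbol{M}_i$ and $\boldsymbol{K}_i$, every cross-product splits into a telescoping quadratic of the type $[\boldsymbol{v}_i^{\mathrm{T}}(\cdot)\boldsymbol{v}_i]_i$, a non-negative averaged quadratic $\langle\boldsymbol{v}_i\rangle_i^{\mathrm{T}}\boldsymbol{K}_i\langle\boldsymbol{v}_i\rangle_i$, or a residual jump-squared quadratic $[\boldsymbol{v}_i]_i^{\mathrm{T}}\boldsymbol{R}_i[\boldsymbol{v}_i]_i$ with
\begin{align}
  \boldsymbol{R}_i = \left[\frac{1}{\Delta t_i} + \frac{\alpha(\vartheta_i - 1/2)}{\Delta t}\right]\boldsymbol{M}_i + \left(\vartheta_i - \frac{1}{2}\right)\left[1 + \frac{\alpha \Delta t_i(\vartheta_i - 1/2)}{\Delta t}\right]\boldsymbol{K}_i .
\end{align}
Collecting the telescoping pieces defines a candidate Lyapunov functional $E^{(n)}$ whose leading quadratic in $\boldsymbol{v}_i^{(n)}$ couples $\boldsymbol{M}_i$ and $\boldsymbol{K}_i$ with $\alpha$-dependent coefficients, and the identity takes the schematic form $\llbracket E^{(n)}\rrbracket + \textit{(non-negative averaged term)} + \sum_i \sum_j [\boldsymbol{v}_i]_i^{\mathrm{T}}\boldsymbol{R}_i[\boldsymbol{v}_i]_i = 0$.

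\emph{Main obstacle.} The closing step is to verify that, under the stated $\Delta t_i \leq \Delta t_i^{\mathrm{critical}}$ and $\alpha \leq \alpha_{\mathrm{max}}$, one has $\boldsymbol{R}_i \succeq \boldsymbol{0}$ together with positive definiteness of $E^{(n)}$ in $\boldsymbol{v}_i^{(n)}$, whence iteration yields $E^{(n)} \leq E^{(0)}$ and boundedness of $\boldsymbol{v}_i^{(n)}$. The $\boldsymbol{M}_i$-coefficient in $\boldsymbol{R}_i$ is non-negative precisely when $\alpha(1-2\vartheta_i) \leq 2\eta_i$ for every $i$ with $\vartheta_i < 1/2$, which is exactly $\alpha \leq \alpha_{\mathrm{max}}$. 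The hardest part is that for explicit subdomains ($\vartheta_i < 1/2$) the coefficient of $\boldsymbol{K}_i$ in $\boldsymbol{R}_i$, as well as that of $\boldsymbol{K}_i$ in the telescoped part of $E^{(n)}$, can fail to be sign-definite; to handle this one must invoke the spectral relation $\boldsymbol{K}_i \preceq \omega_i\boldsymbol{M}_i$ to transfer the unfavourable $\boldsymbol{K}_i$-contributions onto the $\boldsymbol{M}_i$-contributions, at which point the remaining inequality is seen to close exactly under $\Delta t_i \leq 2/((1-2\vartheta_i)\omega_i) = \Delta t_i^{\mathrm{critical}}$. I expect the delicate bookkeeping to decouple the two algebraic requirements so that they recover precisely the conditions in~\eqref{Eqn:Baum_Stab}, rather than coupling them into a joint inequality, to be the main technical hurdle.
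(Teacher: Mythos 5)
Your proposal follows essentially the same route as the paper's proof: the same test vector $[\boldsymbol{v}_i]_i + (\alpha/\Delta t)[\boldsymbol{d}_i]_i$, the same cancellation of the Lagrange-multiplier term via the incremental Baumgarte constraint, and the same splitting into a jump-squared quadratic (your $\boldsymbol{R}_i$, which is the paper's $\boldsymbol{P}_i$ up to the positive factor $\Delta t$), a telescoping average-jump quadratic (the paper's $\boldsymbol{U}_i$), and a discarded non-negative averaged term, with the two hypotheses decoupling because $\boldsymbol{P}_i$ factors as a scalar times $\boldsymbol{M}_i + \Delta t_i(\vartheta_i - \tfrac{1}{2})\boldsymbol{K}_i$. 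If anything, you are more explicit than the paper about needing the spectral bound $\boldsymbol{K}_i \preceq \omega_i \boldsymbol{M}_i$ to also certify positive definiteness of the Lyapunov matrix $\boldsymbol{U}_i$ for explicit subdomains, a point the paper asserts without verification.
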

\begin{proof} 
Consider the following equations: 
\begin{subequations}
\begin{align}
\label{Eqn:S4_Baum_Stab_1}
 &\boldsymbol{M}_i \left[ \boldsymbol{v}_i^{\left( n + \frac{j}{\eta_i}\right)} \right]_i
 + \boldsymbol{K}_i \left[ \boldsymbol{d}_i^{\left( n + \frac{j}{\eta_i}\right)} \right]_i
 = \frac{1}{\eta_i}\boldsymbol{C}_i^{\mathrm{T}} \left \llbracket \boldsymbol{\lambda}^{\left( n \right)} \right \rrbracket \\
\label{Eqn:S4_Baum_Stab_2}
 &\sum_{i = 1}^{S} \boldsymbol{C}_i \left( \left \llbracket \boldsymbol{v}_i^{\left( n \right)}\right \rrbracket + \frac{\alpha}{\Delta t} \left \llbracket \boldsymbol{d}_i^{\left( n \right)} \right \rrbracket\right) = \boldsymbol{0}
\end{align}
\end{subequations}
Premultiplying both sides of equation \eqref{Eqn:S4_Baum_Stab_1} 
by $\left[\boldsymbol{v}_i^{\left( n + \frac{j}{\eta_i} \right)}\right]_i 
+ \frac{\alpha}{\Delta t}\left[ \boldsymbol{d}_i^{\left( n + \frac{j}{\eta_i} 
\right)}\right]_i$ we obtain the following:
\begin{align}
 &\left[ \boldsymbol{v}_i^{\left( n + \frac{j}{\eta_i} \right)}\right]_i^{\mathrm{T}}
 \boldsymbol{M}_i \left[\boldsymbol{v}_i^{\left( n + \frac{j}{\eta_i} \right)}\right]_i
 + \frac{\alpha}{\Delta t} \left[ \boldsymbol{d}_i^{\left( n + \frac{j}{\eta_i} \right)}\right]_i^{\mathrm{T}} \boldsymbol{M}_i \left[ \boldsymbol{v}_i^{\left( n + \frac{j}{\eta_i} \right)}\right]_i 
 +\left[ \boldsymbol{v}_i^{\left( n + \frac{j}{\eta_i} \right)}\right]_i^{\mathrm{T}} 
 \boldsymbol{K}_i \left[ \boldsymbol{d}_i^{\left( n + \frac{j}{\eta_i} \right)}\right]_i 
 \nonumber \\
 &\qquad + \frac{\alpha}{\Delta t} \left[ \boldsymbol{d}_i^{\left( n + \frac{j}{\eta_i} \right)}\right]_i^{\mathrm{T}} \boldsymbol{K}_i \left[ \boldsymbol{d}_i^{\left( n + \frac{j}{\eta_i} \right)}\right]_i 
 = \frac{1}{\eta_i} \left \llbracket \boldsymbol{\lambda}^{\left( n \right)}\right \rrbracket ^
 {\mathrm{T}}\boldsymbol{C}_i \left( \left[ \boldsymbol{v}_i^{\left( n + \frac{j}{\eta_i} \right)}\right]_i + \frac{\alpha}{\Delta t}
 \left[ \boldsymbol{d}_i^{\left( n + \frac{j}{\eta_i} \right)}\right]_i \right)
\end{align}
Employing equation \eqref{Eqn:Trapezoidal_jump_d} yields:
\begin{align}
\label{Eqn:S4_Baum_Stability_Int_1}
 &\left[ \boldsymbol{v}_i^{\left( n + \frac{j}{\eta_i} \right) }\right]_i^{\mathrm{T}} 
 \left( \left( 1 + \alpha \left( \vartheta_i - \frac{1}{2} \right)\frac{\Delta t_i}{\Delta t}\right)\boldsymbol{M}_i
 +  \Delta t_i \left( \vartheta_i - \frac{1}{2} \right)\left( 1 + \alpha \left( \vartheta_i - \frac{1}{2} \right)\frac{\Delta t_i}{\Delta t}\right) \boldsymbol{K}_i \right) 
 \left[ \boldsymbol{v}_i^{\left( n + \frac{j}{\eta_i} \right) }\right]_i  \nonumber \\
+ &\left\langle \boldsymbol{v}_i^{\left( n + \frac{j}{\eta_i} \right)} \right\rangle_i ^{\mathrm{T}}
\left( \alpha \frac{\Delta t_i}{\Delta t} \boldsymbol{M}_i + \Delta t_i
\left( 1 + 2 \alpha \left( \vartheta_i - \frac{1}{2}\right) \frac{\Delta t_i}{\Delta t}\right)\boldsymbol{K}_i \right) \left[ \boldsymbol{v}_i^
{\left( n + \frac{j}{\eta_i} \right)} \right]_i 
+ \alpha \frac{\Delta t_i^{2}}{\Delta t} \left\langle \boldsymbol{v}_i^{\left( n + \frac{j}{\eta_i} \right) }\right\rangle_i^{\mathrm{T}} \boldsymbol{K}_i \left\langle \boldsymbol{v}_i^{\left( n + \frac{j}{\eta_i} \right) }\right\rangle_i \nonumber \\ 
= & \frac{1}{\eta_i}\left \llbracket \boldsymbol{\lambda}^{\left(n\right)} \right \rrbracket ^{\mathrm{T}} \boldsymbol{C}_i
\left( \left[ \boldsymbol{v}_i^{\left( n + \frac{j}{\eta_i} \right)}\right]_i +
 \frac{\alpha}{\Delta t}\left[ \boldsymbol{d}_i^{\left( n + \frac{j}{\eta_i} 
\right)}\right]_i \right)
\end{align}
Note that the parameters $\alpha$, $\Delta t_i$, and $ 
\Delta t$ are strictly positive. The matrices $\boldsymbol{K}_i$ 
are assumed to be positive semi-definite. Thus, we have the following
inequality: 
\begin{align}
  \left[ \boldsymbol{v}_i^{\left( n + \frac{j}{\eta_i} \right) }
    \right]_i^{\mathrm{T}} \boldsymbol{P}_i
  \left[ \boldsymbol{v}_i^{\left( n + \frac{j}{\eta_i} \right) }\right]_i  
  &+ \left\langle \boldsymbol{v}_i^{\left( n + \frac{j}{\eta_i} \right)} 
  \right\rangle_i ^{\mathrm{T}} \boldsymbol{U}_i
  \left[ \boldsymbol{v}_i^
    {\left( n + \frac{j}{\eta_i} \right)} \right]_i \nonumber \\
  &\leq \left \llbracket \boldsymbol{\lambda}^{\left(n\right)} 
\right \rrbracket ^{\mathrm{T}} \boldsymbol{C}_i \left(\left[ 
  \boldsymbol{v}_i^{\left( n + \frac{j}{\eta_i} \right)}\right]_i +
\frac{\alpha}{\Delta t}\left[ \boldsymbol{d}_i^{\left( n + \frac{j}{\eta_i} 
\right)}\right]_i \right)
\end{align}
where 
\begin{subequations}
  \begin{align}
    &\boldsymbol{P}_i := \left(\eta_i + \alpha 
    \left(\vartheta_i - \frac{1}{2}\right)\right)\boldsymbol{M_i}
    +  \Delta t_i \left(\vartheta_i - \frac{1}{2}\right) 
    \left(\eta_i + \alpha \left( \vartheta_i - \frac{1}{2}\right)\right) 
    \boldsymbol{K}_i \\
    &\boldsymbol{U}_i := \alpha \boldsymbol{M}_i + 
    \Delta t_i \left(\eta_i + 2 \alpha \left(\vartheta_i 
    - \frac{1}{2}\right)\right)\boldsymbol{K}_i
  \end{align}
\end{subequations}
Summing over all the subdomains (i.e., summing over $i$) 
and subdomain time-levels (i.e., summing 
over $j$), gives the following inequality: 
\begin{align}
  \label{Eqn:Baum_Stability_Int_2}
  \sum_{i = 1}^{\mathcal{S}} \sum_{j = 0}^{\eta_i - 1} 
  \left[\boldsymbol{v}_i^{\left( n + \frac{j}	
      {\eta_i} \right)} \right]_i^{\mathrm{T}} \boldsymbol{P}_i
  \left[ \boldsymbol{v}_i^{\left( n + \frac{j}{\eta_i} \right)}\right]_i 
  &+  \sum_{i = 1}^{\mathcal{S}} \sum_{j = 0}^{\eta_i - 1} 
  \left\langle \boldsymbol{v}_i^{\left( n + \frac{j}{\eta_i} \right)} 
  \right\rangle_i ^{\mathrm{T}} \boldsymbol{U}_i \left[ 
    \boldsymbol{v}_i^{\left( n + \frac{j}{\eta_i} \right)} \right]_i \nonumber \\
  &\leq \left \llbracket \boldsymbol{\lambda}^{\left( n \right)} 
  \right \rrbracket^{\mathrm{T}} \sum_{i = 1}^{\mathcal{\mathcal{S}}} 
  \boldsymbol{C}_i \left( \left \llbracket \boldsymbol{v}_i^{\left( n \right)}
  \right \rrbracket + \frac{\alpha}{\Delta t} \left \llbracket 
  \boldsymbol{d}_i^{\left( n \right)} \right \rrbracket \right)
\end{align}  
The compatibility condition along the subdomain 
interface in the form given by equation 
\eqref{Eqn:Baum_Stability_Int_2} implies 
that 
\begin{align}
  \sum_{i = 1}^{\mathcal{S}} \sum_{j = 0}^{\eta_i - 1} 
  \left[\boldsymbol{v}_i^{\left( n + \frac{j}	
      {\eta_i} \right)} \right]_i^{\mathrm{T}} \boldsymbol{P}_i
  \left[ \boldsymbol{v}_i^{\left( n + \frac{j}{\eta_i} \right)}\right]_i 
  +  \sum_{i = 1}^{\mathcal{S}} \sum_{j = 0}^{\eta_i - 1} 
  \left\langle \boldsymbol{v}_i^{\left( n + \frac{j}{\eta_i} \right)} 
  \right\rangle_i ^{\mathrm{T}} \boldsymbol{U}_i \left[ 
    \boldsymbol{v}_i^{\left( n + \frac{j}{\eta_i} \right)} \right]_i 
  \leq 0
\end{align}  
From the hypothesis of the theorem, it is easy 
to show that the matrix $\boldsymbol{P}_i$ is 
positive semi-definite. 
%
This implies that we have the following inequality:
\begin{align}
   \sum_{i = 1}^{\mathcal{S}} \sum_{j = 0}^{\eta_i - 1} 
  \left\langle \boldsymbol{v}_i^{\left( n + \frac{j}{\eta_i} \right)} 
  \right\rangle_i ^{\mathrm{T}} \boldsymbol{U}_i \left[ 
    \boldsymbol{v}_i^{\left( n + \frac{j}{\eta_i} \right)} \right]_i 
  \leq 0
\end{align}  
It is easy to check that $\boldsymbol{U}_i$ 
is symmetric, which implies the following: 
\begin{align}
  \sum_{i = 1}^{\mathcal{S}} \sum_{j = 0}^{\eta_i - 1} 
  \left\langle \boldsymbol{v}_i^{\left( n + \frac{j}{\eta_i} \right)} 
  \right\rangle_i ^{\mathrm{T}} \boldsymbol{U}_i \left[ 
    \boldsymbol{v}_i^{\left( n + \frac{j}{\eta_i} \right)} \right]_i 
  &= \sum_{i = 1}^{\mathcal{S}} \sum_{j = 0}^{\eta_i - 1} 
   \frac{1}{2}\left[{\boldsymbol{v}_i^{\left( n + \frac{j}{\eta_i} \right)}}^{\mathrm{T}} 
     \boldsymbol{U}_i \boldsymbol{v}_i^{\left( n + \frac{j}{\eta_i} \right)} \right]_i 
     \nonumber \\
  &= \frac{1}{2} \sum_{i = 1}^{\mathcal{\mathcal{S}}} 
  \left\llbracket{\boldsymbol{v}_i^{\left( n \right)}}^{\mathrm{T}} 
  \boldsymbol{U}_i\boldsymbol{v}_i^{\left( n \right)} \right\rrbracket 
  \leq 0 \quad \forall n
\end{align}
This further implies that 
\begin{align}
  \sum_{i = 1}^{\mathcal{S}} {\boldsymbol{v}_i^{(n)}}^{\mathrm{T}} 
  \boldsymbol{U}_i \boldsymbol{v}_i^{(n)} \leq 
  \sum_{i = 1}^{\mathcal{S}} {\boldsymbol{v}_i^{(n-1)}}^{\mathrm{T}} 
  \boldsymbol{U}_i \boldsymbol{v}_i^{(n-1)} \leq \cdots \leq
  \sum_{i = 1}^{\mathcal{S}} {\boldsymbol{v}_i^{(0)}}^{\mathrm{T}} 
  \boldsymbol{U}_i \boldsymbol{v}_i^{(0)}
\end{align}
Since the matrices $\boldsymbol{U}_i \; (i= 1, 
\cdots, \mathcal{S})$ are positive definite, and the 
initial rates $\boldsymbol{v}_i^{(0)}$ are bounded, 
one can conclude that the rate variables will remain 
bounded at all time-levels. 
\end{proof}

\subsection{Bounds on drifts in concentrations and rate variables}
A well-known phenomenon appearing in numerical solutions 
of DAEs is the drift in the compatibility/constraint 
equations \citep{Hairer_Wanner_V2}). In our case, the 
drift will manifest as discontinuity in the primary 
and/or rate variables along the subdomain interface. 
The drifts will be different for two proposed 
coupling methods, as they differ in handling 
compatibility conditions along the subdomain 
interface. 
Herein, we shall ignore subcycling (i.e., $\eta_i = 1\; 
\forall i$), and assume that $\vartheta_i = \vartheta \; 
\forall i$. The following notation is employed:
\begin{align}
  \label{Eqn:S4_Drift_Notation}
  \boldsymbol{d}_{\mathrm{drift}}^{(n)} := 
  \sum_{i=1}^{\mathcal{S}} \boldsymbol{C}_i 
  \boldsymbol{d}_i^{(n)}\; , \quad
  \boldsymbol{v}_{\mathrm{drift}}^{(n)} := 
  \sum_{i = 1}^{\mathcal{S}} \boldsymbol{C}_i 
  \boldsymbol{v}_i^{(n)}
\end{align}
Under the $d$-continuity coupling method, 
by construction of the method, there is 
no drift in the primary variable (i.e., 
concentration) along the subdomain 
interface at all system time levels. 
The drift in the rate satisfy the 
following recursive relation:
\begin{align}
\label{Eqn:d_con_drift}
	\boldsymbol{v}_{\mathrm{drift}}^{\left( n + 1 \right)} = 
	\left( 1 - \frac{1}{\vartheta} \right) \boldsymbol{v}_
	{\mathrm{drift}}^{\left( n \right)} \quad \forall n > 1
\end{align}
Note that if the implicit Euler method (i.e., $\vartheta = 1$) 
is employed then the drifts at system time-levels will be zero 
in both concentrations and rate variables.	

Under the proposed coupling method with Baumgarte 
stabilization, the following recursive relations 
hold:
\begin{subequations}
\label{Eqn:S4_Drift_Measurement}
\begin{align}
	&\boldsymbol{d}_{\mathrm{drift}}^{\left( n + 1 \right)} = \frac{1}{1 + \alpha \vartheta} \boldsymbol{d}_{\mathrm{drift}}^{\left( n \right)} + 
	\frac{\Delta t \left( 1 - \vartheta \right)}{1 + \alpha \vartheta} \boldsymbol{v}_{\mathrm{drift}}^{\left( n \right)} \quad \forall n > 1\\ 
	&\boldsymbol{v}_{\mathrm{drift}}^{\left( n + 1 \right)} = 
	- \frac{\alpha}{\Delta t \left( 1 + \alpha \vartheta \right)} 
	\boldsymbol{d}_{\mathrm{drift}}^{\left( n \right)} - \frac{\alpha \left( 1 - \vartheta \right)}{1 + \alpha \vartheta} \boldsymbol{v}_{\mathrm{drift}}^{\left( n \right)} \quad \forall  n > 1
\end{align}
\end{subequations}
which imply that choosing larger $\alpha$ will 
decrease drifts in concentration.
It should be noted that subcycling, and mixed methods 
can have adverse effects on the drifts. That is, the 
drifts can be worse than predictions made by the 
above bounds. However, the above relations can be 
valuable to check a computer implementation, and 
can show a general trend of the drifts in the numerical 
time integration process. In a subsequent section, 
some numerical results are presented to corroborate 
the aforementioned theoretical predictions.

\subsection{Influence of perturbations}
In this section, we will study the propagation of 
perturbations over a system time-step. This analysis
will help us to better understand how perturbations in 
input (in this case, previous time-level) will affect 
the solution at the next time-level. In the following
theorem, we will consider application of the proposed
method to non-linear DAEs of the form:
\begin{align}
  &\boldsymbol{M}_i \dot{\boldsymbol{c}}_i \left( t \right) =
  \boldsymbol{h}_i \left( \boldsymbol{c}_i \left( t\right),
  t \right) + \boldsymbol{C}_i^{\mathrm{T}} \boldsymbol{\lambda} \left( t \right) \quad \forall i\\
  &\sum_{i = 1}^{\mathcal{S}} \boldsymbol{C}_i \boldsymbol{c}_i \left( t \right) = \boldsymbol{0}
\end{align}
\begin{theorem}
 	Let $\left( \widehat{\boldsymbol{v}}_i^{\left( n + (j+1)/\eta_i\right)},
	\widehat{\boldsymbol{d}}_i^{\left( n + (j+1)/\eta_i \right)},
	\widehat{\lambda}^{(n+1)}\right)$ with $j = 1 , \cdots , \eta_i - 1 $
	and $i = 1, \cdots, \mathcal{S}$ be the solution of the 
	following system
	\begin{subequations}
	\label{Eqn:IoP_perturbed}
	\begin{align}
		& \widehat{\boldsymbol{v}}_i^{\left( n + \frac{j+1}{\eta_i}\right)} = 
		\boldsymbol{M}_{i}^{-1} \boldsymbol{h}_i 									\left( \widehat{\boldsymbol{d}}_i^{\left( n + \frac{j+1}{\eta_i}			\right)} , t^{\left( n + \frac{j+1}{\eta_i}\right)}\right) + 
		\boldsymbol{M}_{i}^{-1}\boldsymbol{C}_i^{\mathrm{T}} 						\widehat{\boldsymbol{\lambda}}^{\left( n + \frac{j+1}{\eta_i}\right)} 		\\
                \label{Eqn:IoP_perturbed_d}
		& \widehat{\boldsymbol{d}}_i^{\left( n + \frac{j+1}{\eta_i} \right)} 		= \widehat{\boldsymbol{d}}_i^{\left( n + \frac{j+1}{\eta_i}\right)} + 
		\Delta t_i \left( 1 - \vartheta_i\right) \widehat{\boldsymbol{v}}			_i^{\left( n + \frac{j}{\eta_i}\right)} + \Delta t_i \vartheta_i 			\widehat{\boldsymbol{v}}_i^{\left( n + \frac{j+1}{\eta_i}\right)} + 		\Delta t_i \boldsymbol{\varepsilon}_{d_i} \\
		&\widehat{\boldsymbol{\lambda}}^{\left( n + \frac{j+1}{\eta_i}				\right)} = \left( 1 - \frac{j+1}{\eta_i}\right) 							\widehat{\boldsymbol{\lambda}}^{(n)} + 
		\left( \frac{j+1}{\eta_i} \right) \widehat{\boldsymbol{\lambda}}^{(
		+1)} + \Delta t \boldsymbol{\Delta}_{\lambda} \\
		& \underbrace{\sum_{i = 1}^{\mathcal{S}} \boldsymbol{C}_i 					\widehat{\boldsymbol{d}}_i^{(n+1)} = \boldsymbol{\varepsilon}				_{\lambda}}_{d-\mathrm{continuity}} \quad \text{or} \quad 				\underbrace{\sum_{i = 1}^{\mathcal{S}}\boldsymbol{C}_i 						\left( \widehat{\boldsymbol{v}}_i^{(n+1)} + 								\frac{\alpha}{\Delta t} \widehat{\boldsymbol{d}}_i^{(n+1)}\right) = 		\frac{1}{\Delta t}\boldsymbol{\varepsilon}_{\lambda}}_{\mathrm{Baumgarte \;			stabilization}}
	\end{align}
	\end{subequations}
	in which we have assumed that 
	\begin{align}
	\boldsymbol{\Delta}_{\lambda} = O(\Delta t), \quad
	\boldsymbol{\varepsilon}_{d_i} = O(\Delta t_i), \quad
	\boldsymbol{\varepsilon}_{\lambda} = O(\Delta t^2)
    \end{align}
       Furthermore,
	\begin{align}
	  \widehat{\boldsymbol{v}}_i^{\left( n\right)} - \boldsymbol{v}_i^{\left( n\right)} = O(\Delta t_i),
		 \quad \widehat{\boldsymbol{d}}_i^{\left( n\right)} - \boldsymbol{d}_i^{\left( n\right)} 
                 = O(\Delta t_i^2), \quad \widehat{\boldsymbol{\lambda}}^{(n)} - \boldsymbol{\lambda}^{(n)} 
                 = O(\Delta t)
	\end{align}
	Let the functions $\boldsymbol{M}_i^{-1}\boldsymbol{h}_i \; 
        (i = 1, \cdots, \mathcal{S})$ be Lipschitz continuous, 	then the following inequalities will hold:
	\begin{subequations}
	  \label{Eqn:IoP_d_continuity}
	  \begin{align}
	    &\left\| \delta \boldsymbol{d}_i^{(n+1)}\right\| \leq \mathcal{C}_{d} 
	    \left( \sum_{l = 1}^{\mathcal{S}} \left( \left\| \delta \boldsymbol{d}_l^{(n)}\right\| 
            + \Delta t \left\| \boldsymbol{\varepsilon}_{d_l}\right\| \right) 
            + \Delta t \left\| \delta \boldsymbol{\lambda}^{(n)}\right\| 
            + \phi \left\| \boldsymbol{\varepsilon}_{\lambda}\right\| 
            + \Delta t^2 \left\|\boldsymbol{\Delta}_{\lambda}\right\|\right) \\
	    &\left\| \delta \boldsymbol{v}_i^{(n+1)}\right\| \leq 
            \mathcal{C}_{v} \left(\sum_{l = 1}^{\mathcal{S}} 
            \left(\frac{1}{\Delta t}\left\| \delta \boldsymbol{d}_l^{(n)}
            \right\| 
            + \left\| \boldsymbol{\varepsilon}_{d_l}\right\| \right) 
            + \left\| \delta \boldsymbol{\lambda}^{(n)}\right\| 
            + \frac{\phi}{\Delta t}\left\| \boldsymbol{\varepsilon}_{\lambda}\right\| 
            + \Delta t \left\| \boldsymbol{\Delta}_{\lambda}\right\|\right) \\
	    &\left\| \delta \boldsymbol{\lambda}^{(n+1)}\right\| 
            \leq \mathcal{C}_{\lambda} \left( \sum_{l = 1}^{\mathcal{S}} 
            \left( \frac{1}{\Delta t}\left\|\delta \boldsymbol{d}_l^{(n)}\right\| 
            + \left\| \boldsymbol{\varepsilon}_{d_l}\right\| \right) 
            + \left\| \delta \boldsymbol{\lambda}^{(n)}\right\| 
            + \frac{\phi}{\Delta t}\left\| \boldsymbol{\varepsilon}_{\lambda}\right\| 
            + \Delta t \left\| \boldsymbol{\Delta}_{\lambda}\right\|\right)
	  \end{align}
	\end{subequations}
	where $\mathcal{C}_{d}$, $\mathcal{C}_{v}$, 
	$\mathcal{C}_{\lambda}$ are constants, $\delta 
        \Box = \widehat{\Box} - \Box$, and 
        \begin{align}
          \phi = \left\{\begin{array}{ll}
          1 & d-\mathrm{continuity \; method} \\
          \Delta t & \mathrm{Baumgarte \; stabilization \; method} 
          \end{array} \right.
        \end{align}
\end{theorem}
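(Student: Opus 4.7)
The plan is to derive the bounds by a standard perturbation-versus-reference argument: subtract the unperturbed system (i.e., equations \eqref{Eqn:Coupling_d_continuity}/\eqref{Eqn:Coupling_Baumgarte}) from the perturbed system \eqref{Eqn:IoP_perturbed}, introduce the error variables $\delta\boldsymbol{v}_i$, $\delta\boldsymbol{d}_i$, $\delta\boldsymbol{\lambda}$, and propagate these errors through the three ingredients of the scheme (momentum equation, trapezoidal update, and interface constraint) in that order. Because the ODE part is common to both coupling methods, the first two steps will be identical; only the final step involving the interface constraint will split into the two cases controlled by the parameter $\phi$.

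First I would handle the subtracted momentum equation. Applying Lipschitz continuity of $\boldsymbol{M}_i^{-1}\boldsymbol{h}_i$ at each intermediate time level gives a pointwise bound of the form $\|\delta\boldsymbol{v}_i^{(n+(j+1)/\eta_i)}\|\le L_i\|\delta\boldsymbol{d}_i^{(n+(j+1)/\eta_i)}\|+\|\boldsymbol{M}_i^{-1}\boldsymbol{C}_i^{\mathrm{T}}\|\,\|\delta\boldsymbol{\lambda}^{(n+(j+1)/\eta_i)}\|$. Next, I would use the perturbed trapezoidal update \eqref{Eqn:IoP_perturbed_d} and telescope it across the subcycles $j=0,\ldots,\eta_i-1$; together with the linear interpolation formula for $\delta\boldsymbol{\lambda}$, this expresses $\delta\boldsymbol{d}_i^{(n+1)}$ and $\delta\boldsymbol{v}_i^{(n+1)}$ as affine combinations of the inputs $\delta\boldsymbol{d}_i^{(n)}$, $\delta\boldsymbol{v}_i^{(n)}$, $\delta\boldsymbol{\lambda}^{(n)}$, $\delta\boldsymbol{\lambda}^{(n+1)}$ and the perturbations $\boldsymbol{\varepsilon}_{d_i}$, $\boldsymbol{\Delta}_{\lambda}$. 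Using the consistency assumption $\delta\boldsymbol{v}_i^{(n)}=O(1)\|\delta\boldsymbol{d}_i^{(n)}\|/\Delta t+\cdots$ (i.e., eliminating $\delta\boldsymbol{v}_i^{(n)}$ via the previous-step momentum equation), one obtains a preliminary estimate of the form
\begin{align*}
\|\delta\boldsymbol{d}_i^{(n+1)}\|\le A_i\!\!\sum_{l=1}^{\mathcal{S}}\!\|\delta\boldsymbol{d}_l^{(n)}\|+B_i\,\Delta t\,\|\delta\boldsymbol{\lambda}^{(n)}\|+E_i\,\Delta t\,\|\delta\boldsymbol{\lambda}^{(n+1)}\|+\Delta t\!\!\sum_{l=1}^{\mathcal{S}}\!\|\boldsymbol{\varepsilon}_{d_l}\|+\Delta t^2\|\boldsymbol{\Delta}_{\lambda}\|,
\end{align*}
and analogously for $\|\delta\boldsymbol{v}_i^{(n+1)}\|$ with an extra factor $1/\Delta t$ in the appropriate places.

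The main obstacle will be the last step: extracting a bound on $\delta\boldsymbol{\lambda}^{(n+1)}$ from the perturbed interface constraint. Substituting the affine expression for $\delta\boldsymbol{d}_i^{(n+1)}$ (respectively, for $\delta\boldsymbol{v}_i^{(n+1)}+(\alpha/\Delta t)\delta\boldsymbol{d}_i^{(n+1)}$ in the Baumgarte case) into $\sum_i\boldsymbol{C}_i(\cdot)=\boldsymbol{\varepsilon}_{\lambda}$ yields a linear system for $\delta\boldsymbol{\lambda}^{(n+1)}$ whose coefficient matrix is a discrete Schur complement of the form $\sum_i\boldsymbol{C}_i\,\boldsymbol{G}_i(\Delta t_i)\boldsymbol{C}_i^{\mathrm{T}}$. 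Under the hypotheses that ensure solvability of the monolithic step (namely invertibility of the interface operator, see Appendix), this Schur complement is symmetric positive definite, and one can control its inverse uniformly in $\Delta t$; in the $d$-continuity case the operator scales like $\Delta t$ which produces the factor $\phi=1$ multiplying $\|\boldsymbol{\varepsilon}_{\lambda}\|$ in the bound for $\|\delta\boldsymbol{d}^{(n+1)}\|$, while in the Baumgarte case the leading scale is $\alpha$, producing $\phi=\Delta t$. Once $\delta\boldsymbol{\lambda}^{(n+1)}$ is bounded in terms of $\delta\boldsymbol{d}_l^{(n)}$, $\delta\boldsymbol{\lambda}^{(n)}$, and the perturbations, back-substitution into the preliminary estimates yields the three stated inequalities \eqref{Eqn:IoP_d_continuity} with suitably defined constants $\mathcal{C}_d,\mathcal{C}_v,\mathcal{C}_\lambda$ that depend only on $\boldsymbol{M}_i$, the Lipschitz constants of $\boldsymbol{M}_i^{-1}\boldsymbol{h}_i$, $\boldsymbol{C}_i$, $\vartheta_i$, $\eta_i$, and $\alpha$. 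Careful bookkeeping of the $\Delta t$ powers that arise from the $1/\eta_i$ factors in \eqref{Eqn:Lagrange_Multiplier_Approx} and from the $\Delta t$ appearing in the Baumgarte constraint is what yields the precise $\phi$-dependence; this is the delicate but purely algebraic heart of the argument.
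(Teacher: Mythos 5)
Your proposal follows essentially the same route as the paper: bound $\delta\boldsymbol{v}_i$ at each sublevel via Lipschitz continuity of $\boldsymbol{M}_i^{-1}\boldsymbol{h}_i$ and the interpolated multiplier, feed this into the perturbed trapezoidal update to get a one-sublevel recursion for $\delta\boldsymbol{d}_i$, telescope over the $\eta_i$ subcycles (assuming $\Delta t_i$ small enough that $1-\Delta t_i\mathcal{C}_i^{h}>0$), and finally use the perturbed interface constraint to close the system for $\delta\boldsymbol{\lambda}^{(n+1)}$ before back-substituting. The one place you genuinely diverge is the last step: the paper simply takes norms of the constraint and ``substitutes,'' which by itself does not produce a bound on $\|\delta\boldsymbol{\lambda}^{(n+1)}\|$; your explicit inversion of the interface Schur complement $\sum_i\boldsymbol{C}_i\boldsymbol{G}_i(\Delta t_i)\boldsymbol{C}_i^{\mathrm{T}}$, with its $O(\Delta t)$ scaling for $d$-continuity versus $O(1)$ scaling for Baumgarte, is the argument actually needed to extract the multiplier bound and to explain the factor $\phi$, so your version is the more complete one (modulo the caveat that for nonlinear $\boldsymbol{h}_i$ the dependence on $\delta\boldsymbol{\lambda}^{(n+1)}$ is only affine after linearization, so the Schur-complement step should be phrased as an invertibility/perturbation argument rather than exact linear algebra, and its symmetry need not hold when $\boldsymbol{K}_i$ is non-symmetric --- only uniform invertibility is required).
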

\begin{proof}
From equation \eqref{Eqn:IoP_perturbed} we can write:
\begin{align}
\label{Eqn:IoP_temp_1}
	\delta \boldsymbol{v}_i^{\left( n + \frac{j+1}{\eta_i}\right)}
	&= \boldsymbol{M}_i^{-1} \left( \boldsymbol{h}_i \left( \widehat{\boldsymbol{d}}_i^{\left( n + \frac{j+1}{\eta_i}\right)} , t^{\left( n + \frac{j+1}{\eta_i}\right)}\right) - \boldsymbol{h}_i \left( \boldsymbol{d}_i^{\left( n + \frac{j+1}{\eta_i}\right)} , t^{\left( n + \frac{j+1}{\eta_i}\right)}\right)\right)  \nonumber \\ &+ \boldsymbol{M}_i^{-1} \boldsymbol{C}_i^{\mathrm{T}} \left( \left( 1 - \frac{j+1}{\eta_i}\right) \delta \boldsymbol{\lambda}^{(n)} + \left( \frac{j+1}{\eta_i}\right) \delta\boldsymbol{\lambda}^{(n+1)} + \Delta t \boldsymbol{\Delta}_{\lambda}\right) \quad \forall i
\end{align}
Lipschitz continuity of  
functions $\boldsymbol{M}_i^{-1}\boldsymbol{h}_i$ and 
$\boldsymbol{M}_i^{-1} \boldsymbol{C}_i^{\mathrm{T}}$ 
can be used to obtain the following inequalities:  
\begin{subequations}
\begin{align} 
&\left\| \boldsymbol{M}_i^{-1}\boldsymbol{h}_i \left( \widehat{\boldsymbol{d}}_i^{\left( n + \frac{j+1}{\eta_i}\right)}, t^{\left( n + \frac{j+1}{\eta_i}\right)}\right)
- \boldsymbol{M}_i^{-1}\boldsymbol{h}_i \left( \boldsymbol{d}_i^{\left( n + \frac{j+1}{\eta_i}\right)}, t^{\left( n + \frac{j+1}{\eta_i}\right)}\right)
\right\| \leq \mathcal{C}_i^{h} \left\| \delta 
\boldsymbol{d}_i^{\left( n + \frac{j+1}{\eta_i}\right)}\right\| \\
&\left \|  \boldsymbol{M}_i^{-1} \boldsymbol{C}_i^{\mathrm{T}} 
\left( \widehat{\boldsymbol{\lambda}}^{\left( n + \frac{j+1}{\eta_i}\right)} - \boldsymbol{\lambda}^{\left( n + \frac{j+1}{\eta_i}\right)}\right) \right \| \leq \mathcal{C}_{i}^{\lambda} \left\| \delta \boldsymbol{\lambda}^{\left( n + \frac{j+1}{\eta_i}\right)}\right\|
\end{align}
\end{subequations}
By taking norms of both sides of equation \eqref{Eqn:IoP_temp_1}, 
and applying the triangle inequality, we obtain the following: 
\begin{align}
\label{Eqn:IoP_deltav_1}
	\left\| \delta \boldsymbol{v}_i^{\left( n + \frac{j+1}{\eta_i}\right)}\right\|
	&\leq \mathcal{C}_i^{h} \left\| \delta \boldsymbol{d}_i^{\left( n + \frac{j+1}{\eta_i}\right)}\right\| + \left( 1 - \frac{j+1}{\eta_i}\right) \mathcal{C}_i^{\lambda} \left\| \delta \boldsymbol{\lambda}^{(n)}\right\| + 
	\left( \frac{j+1}{\eta_i}\right) \mathcal{C}_i^{\lambda} \left\| \delta \boldsymbol{\lambda}^{(n+1)}\right\| + \Delta t \mathcal{C}_i^{\lambda} \left\| \boldsymbol{\Delta}_{\lambda}\right\| \nonumber \\
	&\leq \mathcal{C}_i^{h} \left\| \delta \boldsymbol{d}_i^{\left( n + \frac{j+1}{\eta_i}\right)}\right\| + \mathcal{C}_i^{\lambda} \left\| \delta \boldsymbol{\lambda}^{(n)}\right\| + 
	 \mathcal{C}_i^{\lambda} \left\| \delta \boldsymbol{\lambda}^{(n+1)}\right\| + \Delta t \mathcal{C}_i^{\lambda} \left\| \boldsymbol{\Delta}_{\lambda}\right\|
\end{align}
Note that $0 \leq (j+1)/\eta_i \leq 1 \; \forall j$. 
Using equation \eqref{Eqn:IoP_perturbed_d} one can 
obtain the following inequality: 
\begin{align}
\label{Eqn:IoP_deltad_1}
	\left\| \delta \boldsymbol{d}_i^{\left( n + \frac{j+1}{\eta_i}\right)}\right\|
	&\leq \left\| \delta \boldsymbol{d}_i^{\left( n + \frac{j}{\eta_i}\right)}\right\| + \Delta t_i \left( 1 - \vartheta_i\right)\left\| \delta \boldsymbol{v}_i^{\left( n + \frac{j}{\eta_i}\right)}\right\| + \Delta t_i \vartheta_i \left\| \delta \boldsymbol{v}_i^{\left( n + \frac{j+1}{\eta_i}\right)}\right\| + \Delta t_i \left\| \boldsymbol{\varepsilon}_{d_i}\right\|
	\nonumber \\
	& \leq \left\| \delta \boldsymbol{d}_i^{\left( n + \frac{j}{\eta_i}\right)}\right\| + \Delta t_i \left\| \delta \boldsymbol{v}_i^{\left( n + \frac{j}{\eta_i}\right)}\right\| + \Delta t_i \left\| \delta \boldsymbol{v}_i^{\left( n + \frac{j+1}{\eta_i}\right)}\right\| + \Delta t_i \left\| \boldsymbol{\varepsilon}_{d_i}\right\|
\end{align}
Inequalities \eqref{Eqn:IoP_deltav_1} and 
\eqref{Eqn:IoP_deltad_1} imply the following:
\begin{align}
\left( 1 - \Delta t_i \mathcal{C}_i^{h}\right) \left\| \delta \boldsymbol{d}_i^{\left( n + \frac{j+1}{\eta_i}\right)}\right\| & \leq 
\left( 1 + \Delta t_i \mathcal{C}_i^{h}\right) \left\| \delta \boldsymbol{d}_i^{\left( n + \frac{j}{\eta_i}\right)}\right\| + 2 \Delta t_i \mathcal{C}_i^{\lambda} \left\| \delta \boldsymbol{\lambda}^{(n)}\right\| + 
2 \Delta t_i \mathcal{C}_i^{\lambda} \left\| \delta \boldsymbol{\lambda}^{(n+1)}\right\| \nonumber \\ &+ 2 \Delta t_i \Delta t \mathcal{C}_i^{\lambda} \left\| \boldsymbol{\Delta}_{\lambda}\right\| + \Delta t_i \left\| \boldsymbol{\varepsilon}_{d_i}\right\|
\end{align}
We shall assume that the subdomain time-steps 
$\Delta t_i$ are sufficiently small such that 
$1 - \Delta t_i \mathcal{C}_i^{h} > 0$ holds. 
Then, the propagation of perturbations over a 
subdomain time-step will satisfy the following
inequality:
\begin{align}
\label{Eqn:IoP_deltad_2}
	\left\| \delta \boldsymbol{d}_i^{\left( n + \frac{j+1}{\eta_i}\right)}\right\| & \leq 
\frac{ 1 + \Delta t_i \mathcal{C}_i^{h}}{1 - \Delta t_i \mathcal{C}_i^{h}} \left\| \delta \boldsymbol{d}_i^{\left( n + \frac{j}{\eta_i}\right)}\right\| + \frac{2 \Delta t_i \mathcal{C}_i^{\lambda}}{1 - \Delta t_i \mathcal{C}_i^{h}}  \left\| \delta \boldsymbol{\lambda}^{(n)}\right\| + 
\frac{2 \Delta t_i \mathcal{C}_i^{\lambda}}{1 - \Delta t_i \mathcal{C}_i^{h}} \left\| \delta \boldsymbol{\lambda}^{(n+1)}\right\| \nonumber \\ &+ \frac{2 \Delta t_i \Delta t \mathcal{C}_i^{\lambda}}{1 - \Delta t_i \mathcal{C}_i^{h}}  \left\| \boldsymbol{\Delta}_{\lambda}\right\| + \frac{\Delta t_i}{1 - \Delta t_i \mathcal{C}_i^{h}} \left\| \boldsymbol{\varepsilon}_{d_i}\right\|
\end{align}
Applying the above inequality in a recursive manner, 
the following inequality can be obtained over a 
system time-step: 
\begin{align}
\label{Eqn:IoP_deltad}
\left\| \delta \boldsymbol{d}_i^{\left( n + 1 \right)}\right\|
	&\leq \left( \frac{1 + \Delta t_i \mathcal{C}_i^{h}}{1 - \Delta t_i \mathcal{C}_i^{h}}\right)^{\eta_i} \left\| \delta \boldsymbol{d}_i^{(n)}\right\| + 
	\left\{ \sum_{k = 0}^{\eta_i - 1} \left( \frac{1 + \Delta t_i \mathcal{C}_i^{h}}{1 - \Delta t_i \mathcal{C}_i^{h}}\right)^{k}\right\} \left( \frac{2 \Delta t_i \mathcal{C}_i^{\lambda}}{1 - \Delta t_i \mathcal{C}_i^{h}} \left\| \delta \boldsymbol{\lambda}^{(n)}\right\| \right. \nonumber \\ & \left. + \frac{2 \Delta t_i \mathcal{C}_i^{\lambda}}{1 - \Delta t_i \mathcal{C}_i^{h}} \left\| \delta \boldsymbol{\lambda}^{(n + 1)}\right\|  + \frac{2 \Delta t_i \Delta t \mathcal{C}_i^{\lambda}}{1 - \Delta t_i \mathcal{C}_i^{h}} \left\| \boldsymbol{\Delta}_{\lambda}\right\| + 
	\frac{\Delta t_i}{1 - \Delta t_i \mathcal{C}_i^{h}} \left\| \boldsymbol{\varepsilon}_{d_i}\right\| \right)
\end{align}
Similarly, one can derive the following inequality 
for the rate variables:
\begin{align}
\label{Eqn:IoP_deltav}
	\left\| \delta \boldsymbol{v}_i^{(n+1)}\right\| &\leq \mathcal{C}_i^{h} \left( \frac{1 + \Delta t_i \mathcal{C}_i^{h}}{1 - \Delta t_i \mathcal{C}_i^{h}}\right)^{\eta_i} \left\| \delta \boldsymbol{d}_i^{(n)}\right\|  \nonumber \\
	&+ \left\{ \mathcal{C}_i^{h} \left\{ \sum_{k = 0}^{\eta_i - 1}\left( \frac{1 + \Delta t_i \mathcal{C}_i^{h}}{1 - \Delta t_i \mathcal{C}_i^{h}} \right)^{k} \right\}\frac{2 \Delta t_i \mathcal{C}_i^{\lambda}}{1 - \Delta t_i \mathcal{C}_i^{h}} + \mathcal{C}_i^{\lambda} \right\} \left( \left\| \delta \boldsymbol{\lambda}^{(n)}\right\| + \left\| \delta \boldsymbol{\lambda}^{(n+1)}\right\|\right) \nonumber \\
&+ \left\{ \mathcal{C}_i^{h} \left\{ \sum_{k = 0}^{\eta_i - 1}\left( \frac{1 + \Delta t_i \mathcal{C}_i^{h}}{1 - \Delta t_i \mathcal{C}_i^{h}} \right)^{k} \right\} \frac{2 \Delta t_i \Delta t \mathcal{C}_i^{\lambda}}{1 - \Delta t_i \mathcal{C}_i^{h}} + \Delta t \mathcal{C}_i^{\lambda} \right\} \left\| \boldsymbol{\Delta}_{\lambda}\right\| \nonumber \\
&+\mathcal{C}_i^{h} \left\{ \sum_{k = 0}^{\eta_i - 1}\left( \frac{1 + \Delta t_i \mathcal{C}_i^{h}}{1 - \Delta t_i \mathcal{C}_i^{h}} \right)^{k} \right\} \frac{\Delta t_i}{1 - \Delta t_i \mathcal{C}_i^{h}} \left\| \boldsymbol{\varepsilon}_{d_i}\right\|
\end{align}
From the perturbed constraint equations, we get the 
following inequality for the $d$-continuity method: 
\begin{align}
	\left\| \boldsymbol{\varepsilon}_{\lambda}\right\| = \left\| \sum_{i = 1}^{\mathcal{S}} \boldsymbol{C}_i \boldsymbol{d}_i^{(n+1)}\right\| \leq \sum_{i = 1}^{\mathcal{S}} \left\| \boldsymbol{d}_i^{(n+1)}\right\|
\end{align}
Similarly, the following inequality can be 
derived for the coupling method based on the 
Baumgarte stabilization:
\begin{align}
	\left\| \boldsymbol{\varepsilon}_{\lambda}\right\| =
	\left\| \sum_{i = 1}^{\mathcal{S}} \boldsymbol{C}_i \left( \boldsymbol{v}_i^{(n+1)} + \frac{\alpha}{\Delta t} \boldsymbol{d}_i^{(n+1)}\right)\right\| \leq \sum_{i = 1}^{\mathcal{S}} \left( \left\| \boldsymbol{v}_i^{(n+1)}\right\| + \frac{\alpha}{\Delta t} \left\| \boldsymbol{d}_i^{(n+1)}\right\|\right)
\end{align}
By substituting inequalities \eqref{Eqn:IoP_deltad} 
and \eqref{Eqn:IoP_deltav} in the above inequalities, one 
can obtain the desired inequality for $\left\| \delta 
\boldsymbol{\lambda}^{(n+1)}\right\|$. By substituting 
the resulting inequality in \eqref{Eqn:IoP_deltad} 
and \eqref{Eqn:IoP_deltav}, one can obtain the desired 
inequalities for $\left\| \delta \boldsymbol{d}_i^{(n+1)}\right\|$ 
and $\left\| \delta \boldsymbol{v}_i^{(n+1)}\right\|$.
\end{proof}

\begin{remark}
  The difference in the order of the perturbation in
  the algebraic constraints in \eqref{Eqn:IoP_perturbed}
  arises due to the difference in the differential index 
  of the governing DAEs. That is, the $d$-continuity 
  method form a system of DAEs of index 2, whereas the 
  coupling method based on the Baumgarte stabilization 
  form a system of DAEs of index 1. One can also decide 
  on the order of perturbations based on dimensional 
  analysis and consistency. 
\end{remark}

\section{BENCHMARK PROBLEMS FOR VERIFICATION}
\label{Sec:S6_First_Benchmark}
In this section, several benchmark problems are solved to 
illustrate the accuracy of the proposed coupling methods, 
to verify numerically the theoretical predictions, and 
to check the computer implementation. 

\subsection{Split degree-of-freedom problem}
\label{Sec:S5_First_SDOF}
The governing equations of the coupled system 
that is shown in Figure \ref{Fig:SDOF_problem} 
take the following form:
\begin{subequations}
  \label{Eqn:SDOF_GE}
  \begin{align}
    \label{Eqn:SDOF_Subdomain_A}
    &m_1 \dot{\mathrm{c}}_1(t) + k_1 \mathrm{c}_1(t) = f_1(t) + \lambda(t) \\
    &m_2 \dot{\mathrm{c}}_2(t) + k_2 \mathrm{c}_2(t) = f_2(t) - \lambda(t) \\
    \label{Eqn:SDOF_Constraint}
    &\mathrm{c}_1(t)  - \mathrm{c}_2(t) = 0
  \end{align}
\end{subequations}
where $\lambda(t)$ is the Lagrange multiplier. 
The following parameters have been used 
in this numerical simulation:
\begin{align}
  m_1 = 100, \; m_2 = 1, \; k_1 = 1, \; 
  k_2 = 100, \; f_1 = f_2 = 0
\end{align}
We shall solve the DAEs given by equations 
\eqref{Eqn:SDOF_Subdomain_A}--\eqref{Eqn:SDOF_Constraint} 
using the proposed multi-time-step coupling 
methods, subject to the initial 
condition $\mathrm{c}_1(t = 0) = \mathrm{c}_2(t = 0) = 1$. 

\subsubsection{Performance of the $d$-continuity method}
Figure \ref{Fig:SDOF_d_continuity} shows the results
of numerical solution to \eqref{Eqn:SDOF_GE} using the
proposed coupling method with $d$-continuity. Implicit 
Euler method (i.e., $\vartheta_1 = 1$) is used to integrate the first 
subdomain, and the second subdomain is integrated using 
the midpoint rule (i.e., $\vartheta_2 = 1/2$). The results are
shown for several different choices of system and subdomain
time-steps (see Table \ref{Tbl:SDOF_d_con}). As shown earlier, 
the proposed method is stable under $d$-continuity if 
$\vartheta_i \geq 1/2$ in all subdomains. Enforcing 
$d$-continuity, assures the continuity of primary 
variable (which will be the concentration in this paper) 
along the interface at all system time-levels. The 
proposed methods shows good compatibility with 
the exact solution.
\begin{table}
\caption{Split degree-of-freedom problem: Time-integration parameters for 
the $d$-continuity method. \label{Tbl:SDOF_d_con}}
\begin{tabular}{|c | c c c c c|}
\hline
Case 	& $\Delta t$ & $\Delta t_1$ & $\Delta t_2$ & $\vartheta_1$ & $\vartheta_2$ \\ \hline
1				& 0.5 		 & 0.25         & 0.5   	   & 1 		& 1/2 \\ 
2				& 0.5		 & 0.05 		& 0.1 		   & 1 		& 1/2 \\
3 				& 0.1 		 & 0.05 		& 0.1 		   & 1 		& 1/2 \\ \hline
\end{tabular}
\end{table}
\subsubsection{Performance of the Baumgarte stabilization}
Baumgarte stabilization allows coupling explicit and 
implicit time-integrators in different subdomains. Midpoint
rule is employed in the first subdomain (i.e., $\vartheta_1 = 1/2$).
In this problem explicit Euler method is used in the second
subdomain (i.e., $\vartheta_2 = 0$). As it can be seen in 
Figure \ref{Fig:SDOF_Baumgarte}, choice of system time-step
$\Delta t$, and Baumgarte stabilization parameter $\alpha$,
influence the accuracy of the numerical result (see Table 
\ref{Tbl:SDOF_Baumgarte} for the values of integration parameters). 
The drift in the primary variables, $u_1$ and $u_2$, is nonzero.
One can observe in Figure \ref{Fig:SDOF_Baumgarte}, that
increasing the Baumgarte stabilization parameter $\alpha$,
or decreasing the system time-step $\Delta t$ can improve
the accuracy. Figure \ref{Fig:SDOF_Error} shows the absolute error
at time $t = 1$ vs. the system time-step. These 
figures show that despite subcycling (and using 
linear interpolation for Lagrange multipliers),
the convergence rate remains close to that of 
the midpoint rule (which was used in all subdomains).
\begin{table}
\caption{Split degree-of-freedom problem: Time-integration parameters for 
the Baumgarte stabilization method. \label{Tbl:SDOF_Baumgarte}}
\begin{tabular}{|c | c c c c c c|}
\hline
Case 	& $\Delta t$ & $\alpha$ & $\Delta t_1$ & $\Delta t_2$ & $\vartheta_1$ & $\vartheta_2$ \\ \hline
1				& 0.5 	& 1.0	 & 0.1      & 0.02   	   & 1/2 		& 0 \\ 
2				& 0.1	& 1.0	 & 0.1 		& 0.02 		   & 1/2 		& 0 \\
3 				& 0.5 	& 25.0	 & 0.1 		& 0.02 		   & 1/2 		& 0 \\ \hline
\end{tabular}
\end{table}

\subsection{One-dimensional problem}
We will consider an unsteady diffusion with 
decay in one-dimension, which is an extension 
of the steady-state version considered in 
\citep{Farrell_Hemker_Shishkin_JCM_1995}. 
The governing equations can be written as 
follows: 
\begin{subequations}
  \label{Eqn:1D_Perturbed_Problem}
  \begin{alignat}{2}
    &\frac{\partial \mathrm{c}}{\partial t} + \mathrm{c} 
    - \varepsilon^2 \frac{\partial^2 \mathrm{c}}{\partial \mathrm{x}^2}
    = 1 \quad && \mathrm{x} \in (0,1), \; t \in (0,T] \\
	&\mathrm{c}(\mathrm{x}=0,t) = \mathrm{c}(\mathrm{x}=1,t) = 0 
        \quad &&t \in (0,T] \\
	  &\mathrm{c}(\mathrm{x},t = 0) = 0 \quad &&\mathrm{x} \in (0,1)
  \end{alignat}
\end{subequations}
It is well-known that the solution of this singularly
perturbed problem will exhibit boundary layers for 
small values of $\varepsilon$. Herein, we have taken 
$\varepsilon = 0.01$.
We shall demonstrate the benefits of using the 
proposed multi-time-step coupling methods to 
problems in which the behavior of the solution 
can be very different in various regions of 
the computational domain.

The domain is decomposed into three subdomains, as 
shown in Figure \ref{Fig:Hemker_1D}. Subdomains 1 
and 3 are the regions in which the boundary layers 
will appear. Note that these subdomains are meshed 
using much finer elements than subdomain 2. For 
time-integration variables, see tables 
\ref{Tbl:Hemker_1D_d_con} and \ref{Tbl:Hemker_1D_Baumgarte}.
The numerical results obtained using the 
proposed multi-time-step coupling methods are shown 
in Figures \ref{Fig:1D_Hemker_d_continuity} and 
\ref{Fig:1D_Hemker_Baumgarte_Set1}. Results are 
in good agreement with the exact solution, and 
the boundary layers are captured accurately by 
the proposed coupling methods. 
The drifts in concentrations and rate variables
are plotted in figures \ref{Fig:1D_Hemker_d_continuity_drift} 
and \ref{Fig:1D_Hemker_Baumgarte_drift}.
This numerical experiment illustrates the 
following attractive features of the proposed 
coupling methods: 
\begin{enumerate}[(a)]
\item The system time-step can be much larger 
  than subdomain time-steps. 
\item For fixed subdomain time-steps, smaller 
  system time-step will result in better accuracy. 
\item Under the coupling method based on the 
  Baumgarte stabilization and fixed subdomain 
  time-steps, decreasing system time-step and/or 
  increasing the Baumgarte stabilization parameter 
  will result in improved accuracy.
\item Utilizing smaller time-steps in individual 
  subdomains improves the accuracy of results in 
  the respective subdomain.
\end{enumerate}

\begin{table}
\caption{One-dimensional problem: Time-integration parameters for the
$d$-continuity method.\label{Tbl:Hemker_1D_d_con}}
\begin{tabular}{|c | c c c c c c c|}
\hline
Case 	& $\Delta t$ & $\Delta t_1$ & $\Delta t_2$ & $\Delta t_3$ & $\vartheta_1$ & $\vartheta_2$ & $\vartheta_3$ \\ \hline
1		& 0.25 & 0.05 & 0.25 & 0.05 & 1/2 & 1 & 1/2 \\ 
2 		& 0.25 & 0.05 & 0.01 & 0.05 & 1/2 & 1 & 1/2 \\ 
3		& 0.1  & 0.1  & 0.1  & 0.1  & 1/2 & 1/2 & 1/2 \\\hline
\end{tabular}
\end{table}

\begin{table}
\caption{One-dimensional problem: Time-integration parameters for the
Baumgarte stabilization method.\label{Tbl:Hemker_1D_Baumgarte}}
\begin{tabular}{|c | c c c c c c c c|}
\hline
Case 	& $\Delta t$ & $\alpha$ &$\Delta t_1$ & $\Delta t_2$ & $\Delta t_3$ & 
$\vartheta_1$ & $\vartheta_2$ & $\vartheta_3$ \\ \hline
1	& 0.25 & 1 & 0.125   & 0.25 & 0.125   & 1/2 & 0 & 1/2 \\ 
2 	& 0.25 & 5 & 0.125   & 0.05 & 0.125   & 1/2 & 0 & 1/2 \\ 
3 	& 0.25 & 5 & 0.00125 & 0.25 & 0.00125 & 0   & 1 & 0   \\
4 	& 0.25 & 1 & 0.0025  & 0.25 & 0.0025  & 0   & 1 & 0   \\ 
5   & 0.1  & 1 & 0.1     & 0.1  & 0.1     & 1/2 & 1/2 & 1/2 \\\hline
\end{tabular}
\end{table}

\subsection{Two-dimensional problem}
A transient version of the well-known problem 
proposed by Hemker \citep{Hemker_JCAM_1996} 
will be considered. The governing equations take 
the following form:
\begin{subequations}
  \label{Eqn:Hemker_2D}
  \begin{alignat}{2}
    &\frac{\partial \mathrm{c}}{\partial t} + 
    \frac{\partial \mathrm{c}}{\partial \mathrm{x}} - \varepsilon 
    \mathrm{div} \left[ \mathrm{grad}[\mathrm{c}]\right] = 0 
    \quad &&\mathrm{in} \; \Omega\\
    &\mathrm{c}(\mathrm{x},\mathrm{y},t) = 1 \quad &&\mathrm{on} \; 
    \Gamma^{\mathrm{D}}_{1} \\
    &\mathrm{c}(\mathrm{x},\mathrm{y},t) = 0 \quad &&\mathrm{on}\; 
    \Gamma^{\mathrm{D}}_{2} \\
    -&\varepsilon \mathrm{grad} [\mathrm{c}]\cdot 
    \widehat{\mathbf{n}}(\mathbf{x}) = 0 
    \quad &&\mathrm{on} \; \Gamma^{\mathrm{N}} \\
    &\mathrm{c}(\mathrm{x},\mathrm{y},t = 0) = 0 \quad 
    &&\mathrm{in} \; \Omega
  \end{alignat}
\end{subequations}
Computational domain, 
mesh, and domain decomposition are shown in Figures 
\ref{Fig:Hemker_2D} and \ref{Fig:2D_Hemker_mesh}. 
In this problem, the advection velocity is $\mathbf{v} = 
(1,0)$, and $\varepsilon = 0.01$.
The problem at hand is a singularly perturbed
equation and is known to exhibit both boundary 
and interior layers. Furthermore, the standard 
Galerkin formulation is known to produce spurious 
oscillations for small values of $\varepsilon$ 
\citep{Gresho_Sani_v1}.

The numerical results obtained using the Galerkin 
weak formulation are shown in Figure 
\ref{Fig:2D_Hemker_Galerkin}. As expected, spurious 
oscillations occur at the vicinity of the circle. 
The minimum value observed in both cases is -0.439. 
The spurious oscillations and the violation of the non-negative 
constraint is because of using the Galerkin weak formulation, 
and is \emph{not} due to the use of proposed multi-time-step 
coupling methods. To corroborate this claim, Figure 
\ref{Fig:2D_Hemker_GLS_SUPG_Galerkin} shows the results 
where tailored weak formulations are employed in different 
subdomains. The GLS formulation is used in subdomain 1, the 
SUPG formulation is employed in subdomain 2, and the Galerkin 
formulation in subdomain 3. There are no spurious oscillations 
and the minimum value observed is -0.062. 
In Figure \ref{Fig:2D_Hemker_drift}, the $\infty$-norm of
drift of concentrations from compatibility constraints
is shown. There is no noticeable drift and in the 
case of Baumgarte stabilization method, the the 
drifts are controlled.
Time integration
parameters are given in tables \ref{Tbl:2D_Hemker_1} and 
\ref{Tbl:2D_Hemker_2}.
This example demonstrates choice of disparate time-steps,
and different numerical formulations in different
spatial regions of the computational domain. 

\begin{table}
\centering
\caption{Two-dimensional transient Hemker problem:~Time-integration
parameters for results using the standard Galerkin method. 
\label{Tbl:2D_Hemker_1}}
\begin{tabular}{| c | c c c c c c c c |}
\hline
Compatibility condition & $\Delta t$ & $\alpha$ & $\Delta t_1$ & $\Delta t_2$ & $\Delta t_3$ & $\vartheta_1$ & $\vartheta_2$ & $\vartheta_3$ \\ \hline
$d$-continuity method & 0.1 & & 0.001 & 0.01 & 0.1 & 1/2 & 1 & 1 \\
Baumgarte stabilization & 0.2 & 1 & 0.01 & 0.05 & 0.02 & 1/2 & 1 & 0 \\ \hline
\end{tabular}
\end{table}

\begin{table}
\centering
\caption{Two-dimensional transient Hemker problem:~Time-integration
parameters for results using the GLS-SUPG-Galerkin formulations. 
\label{Tbl:2D_Hemker_2}}
\begin{tabular}{| c | c c c c c c c c |}
\hline
Compatibility condition & $\Delta t$ & $\alpha$ & $\Delta t_1$ & $\Delta t_2$ & $\Delta t_3$ & $\vartheta_1$ & $\vartheta_2$ & $\vartheta_3$ \\ \hline
$d$-continuity method & 0.2 & & 0.001 & 0.005 & 0.2 & 1/2 & 1 & 1 \\
Baumgarte stabilization & 0.2 & 1 & 0.001 & 0.005 & 0.02 & 1 & 1/2 & 0 \\ \hline
\end{tabular}
\end{table}

\section{MULTI-TIME-STEP TRANSIENT ANALYSIS OF 
  A TRANSPORT-CONTROLLED BIMOLECULAR REACTION}
\label{Sec:First_Fast}
In this section, we shall apply the proposed multi-time-step 
coupling methods to a transport-controlled bimolecular 
reaction. This problem is of tremendous practical 
importance in areas such as transverse mixing-limited 
chemical reactions in groundwater and aquifers, and 
mixing-controlled bioreactive transport in heterogeneous 
porous media arising in bioremediation. We shall now 
document the most important equations of the mathematical 
model. A more detailed discussion about the model can be 
found in \citep{Nakshatrala_Mudunuru_Valocchi_JCP_2013}, 
which however did not address multi-time-step coupling methods. 

\subsection{Mathematical model}
Consider the following irreversible chemical reaction:
\begin{align}
  \label{Eqn:Bimolecular_reaction}
  n_A A + n_B B \rightarrow n_C C 
\end{align}
where $A$, $B$ and $C$ are the chemical species 
participating in the reaction, and $n_A$, $n_B$ and 
$n_C$ are their respective (positive) stoichiometry 
coefficients. The fate of the reactants and the 
product are governed by coupled system of transient 
advection-diffusion-reaction equations.
We shall assume the part of the boundary on which 
the Dirichlet boundary condition is enforced to 
be the same for the reactants and the product. 
Likewise is assumed for the Neumann boundary 
conditions. 
One can then find two invariants that are 
unaffected by the underlying reaction, which 
can be obtained via the following linear 
transformation:
\begin{subequations}
  \label{Eqn:Bimolecular_transformation}
  \begin{align}
    \mathrm{c}_F := \mathrm{c}_A + \left( \frac{n_A}{n_C} \right) \mathrm{c}_C \\
    \mathrm{c}_G := \mathrm{c}_B + \left( \frac{n_B}{n_C} \right) \mathrm{c}_C
  \end{align}
\end{subequations}
The evolution of these invariants is given by the 
following uncoupled transient advection-diffusion 
equations: 
\begin{subequations}
  \label{Eqn:Bimolecular_reaction_PDE_uncoupled}
  \begin{alignat}{2}
    \label{Eqn:Bimolecular_uncoupled_PDE}
    &\frac{\partial \mathrm{c}_i}{\partial t} + \mathrm{div} 
    \left[\mathbf{v} \mathrm{c}_i - \mathbf{D}(\mathbf{x}) 
      \mathrm{grad}[\mathrm{c}_i]\right] = f_i(\mathbf{x},t) 
    \quad &&\mathrm{in} \; \Omega \times \mathcal{I} \\
    &\mathrm{c}_i(\mathbf{x},t) = \mathrm{c}_i^{\mathrm{p}}(\mathbf{x},t) 
    := \mathrm{c}_j^{\mathrm{p}}(\mathbf{x},t) + \left( \frac{n_j}{n_C}\right) 
    \mathrm{c}_C^{\mathrm{p}}(\mathbf{x},t) \quad && \mathrm{on} \; 
    \Gamma^{\mathrm{D}} \times \mathcal{I} \\
    -&\widehat{\mathbf{n}}(\mathbf{x}) \cdot \mathbf{D}(\mathbf{x}) 
    \mathrm{grad}[\mathrm{c}_i] = h_{i}^{\mathrm{p}}(\mathbf{x},t)
    := h_j^{\mathrm{p}}(\mathbf{x},t) + \left( \frac{n_j}{n_C}\right)
    h_{C}^{\mathrm{p}}(\mathbf{x},t) \quad &&\mathrm{on} \; 
    \Gamma^{\mathrm{N}} \times \mathcal{I} \\
    \label{Eqn:Bimolecular_uncoupled_initial}
    &\mathrm{c}_i(\mathbf{x},t = 0) = \mathrm{c}_i^{0}(\mathbf{x}):=\mathrm{c}_j^{0}(\mathbf{x}) 
    + \left( \frac{n_j}{n_C}\right) \mathrm{c}_C^{0}(\mathbf{x}) \quad 
    && \mathrm{in} \; \Omega
  \end{alignat}
\end{subequations}
where $i = F$ or $G$.
We shall restrict to fast bimolecular reactions. 
That is, the time-scale of the chemical reaction 
is much smaller than the time-scale of the transport 
processes. For such situations, one can assume that 
the chemical species $A$ and $B$ cannot coexist at 
a spatial point and for a given instance of time. 
This implies that the concentrations of the reactants 
and the product can be obtained from the concentrations 
of the invariants through algebraic manipulations. To wit, 
\begin{subequations}
  \label{Eqn:Bimolecular_max_relations}
  \begin{align}
  \label{Eqn:Bimolecular_max_A}
    &\mathrm{c}_A \left( \mathbf{x}, t\right) = \mathrm{max} \left\{ \mathrm{c}_F \left( \mathbf{x},t\right) - \left( \frac{n_A}{n_B}\right) \mathrm{c}_G \left( \mathbf{x},t\right), 0\right\} \\
	&\mathrm{c}_B \left( \mathbf{x}, t\right) = \left( \frac{n_B}{n_A}\right) \mathrm{max} \left\{ -\mathrm{c}_F \left( \mathbf{x},t\right) + \left( \frac{n_A}{n_B}\right) \mathrm{c}_G \left( \mathbf{x},t\right), 0\right\} \\
  \label{Eqn:Bimolecular_max_C}
	&\mathrm{c}_C \left( \mathbf{x},t\right) = \left( \frac{n_C}{n_A} \right) \left( \mathrm{c}_F \left( \mathbf{x},t\right) - \mathrm{c}_A \left( \mathbf{x},t\right)\right)
\end{align}
\end{subequations}
Note that the solution procedure is still nonlinear, 
as the $\mathrm{max}\{\cdot,\cdot\}$ operator is 
nonlinear. 

We shall employ the proposed multi-time-step 
computational framework to solve equations 
\eqref{Eqn:Bimolecular_uncoupled_PDE}--\eqref{Eqn:Bimolecular_uncoupled_initial} to obtain concentrations of the invariants. 
Using the calculated values, we then find 
the concentrations for the reactants and 
the product using equations 
\eqref{Eqn:Bimolecular_max_A}--\eqref{Eqn:Bimolecular_max_C}. 
The Galerkin formulation is employed in all 
subdomains. The negative values for the 
concentration are clipped at every subdomain 
time-step in the numerical simulations.

\subsection{Numerical results for a diffusion-controlled 
bimolecular reaction}
Consider a reaction chamber with $L_x = L_y = 1$, 
as shown in Figure \ref{Fig:Bimolecular_Baumgarte}. 
The computational domain is meshed using 5442 four-node
quadrilateral elements. As shown in this figure, the domain 
is decomposed into four non-contiguous subdomains using 
METIS \citep{METIS_paper}. The diffusivity tensor is taken as 
follows:
\begin{align}
  \mathbf{D} \left( \mathrm{x}, \mathrm{y}\right) = 
  \left[ \begin{array}{c c}
      \gamma \mathrm{x}^2 + \mathrm{y}^2 & - \left( 1 - \gamma \right) 
      \mathrm{x}\mathrm{y} \\
      - \left( 1 - \gamma \right) \mathrm{x}\mathrm{y} & \mathrm{x}^2 + 
      \gamma \mathrm{y}^2
    \end{array} \right]
\end{align}
where $\gamma = 0.001$.
Baumgarte stabilization is employed to enforce 
compatibility along the subdomain interfaces with 
$\alpha = 100$. Implicit Euler 
method is employed in subdomains 1 and 3, 
and midpoint rule is employed in subdomains 
2 and 4. The system time-step is taken as 
$\Delta t = 10^{-3}$, and the subdomain time-steps 
are taken as $\Delta t_1 = \Delta t_3 = 5 \times 
10^{-4}$, and $\Delta t_2 = \Delta t_4 = 10^{-3}$. 

Numerical results for the concentrations of the invariants, 
reactants and product are shown in Figures 
\ref{Fig:Bimolecular_Baumgarte_set1_FandG} and 
\ref{Fig:Bimolecular_Baumgarte_set1}.
As discussed earlier, Baumgarte stabilized coupling method 
can result in drift in the primary variable but it can be 
controlled using the stabilization parameter $\alpha$. 
Equation \eqref{Eqn:S4_Drift_Measurement} can serve as 
a valuable tool assessing the overall behavior of drifts 
with respect to system time-step, and the Baumgarte 
stabilization parameter $\alpha$. Drifts for several 
choices of $\alpha$ and $\Delta t$ are shown in Figure 
\ref{Fig:Bimolecular_Baumgarte_Drift}. Note that
equation \eqref{Eqn:S4_Drift_Measurement} assumes 
no subcycling, and no mixed time-integration.

\subsection{Numerical results for a fast 
 bimolecular reaction with advection}
Consider a reaction chamber with $L_x = 4$ 
and $L_y = 1$, as shown in Figure 
\ref{Fig:Bimolecular_Problem_with_advection_a}. 
The computational domain is meshed using three-node 
triangular elements, and METIS \citep{METIS_paper} is 
employed to decomposed the domain into four non-contiguous 
subdomains using , as shown in Figure 
\ref{Fig:Bimolecular_Problem_with_advection_b}. 
There are 4151 interface constraints to ensure 
continuity of concentration along the subdomain 
interface. 
The diffusivity tensor is taken as follows:
\begin{align}
  \mathbf{D}(\mathbf{x}) = \alpha_T \| \mathbf{v} \| \mathbf{I} +
  \frac{\alpha_L - \alpha_T}{\| \mathbf{v} \|} \mathbf{v} \otimes 
  \mathbf{v}
\end{align}
where $\mathbf{I}$ is the second-order identity tensor, 
$\otimes$ is the tensor product, $\|\cdot\|$ is the 2-norm, 
$\mathbf{v}(\mathbf{x})$ is the velocity, and $\alpha_L$ and 
$\alpha_T$ are, respectively, the longitudinal and transverse 
diffusivities. This form of diffusivity tensor is commonly 
employed in subsurface hydrology \citep{Pinder_Celia}. 
We define the velocity through the following stream function:
\begin{align}
  \label{Eqn:Bimolecular_Stream}
  \psi(\mathrm{x},\mathrm{y}) = - \mathrm{y} - 
  \sum_{k = 1}^{3} A_{k} \mathrm{cos}\left( \frac{p_k \pi \mathrm{x}}{L_x} 
  - \frac{\pi}{2}\right) \mathrm{sin} \left( \frac{q_k \pi \mathrm{y}}{L_y}\right)
\end{align}
The components of the advection velocity 
can then be calculated as follows:
\begin{align}
  \mathrm{v}_{x}(\mathrm{x},\mathrm{y}) = 
  - \frac{\partial \psi}{\partial \mathrm{y}}, 
  \quad \mathrm{v}_{y}(\mathrm{x},\mathrm{y}) 
  = + \frac{\partial \psi}{\partial \mathrm{x}}
\end{align}
The following parameters are used in the numerical simulation: 
\begin{align}
  p_1 = 4, \; p_2 = 5, \; p_3 = 10, \; 
  q_1 = 1, \; q_2 = 5, \; q_3 = 10, \; 
  A_1 = 0.08, \; A_2 = 0.02, \; A_3 = 0.01
\end{align}
The diffusivities are taken as $\alpha_L = 1$ and 
$\alpha_T = 10^{-4}$, and the prescribed concentrations 
on the boundary are taken as $c_A^{\mathrm{p}} = 1.0$ and 
$c_B^{\mathrm{p}} = 1.5$.

The numerical results for the concentration of the 
product at various time levels obtained using the 
$d$-continuity coupling method are shown in Figure 
\ref{Fig:Bimolecular_d_Continuity}, and there is no 
drift along the subdomain interface, which is expected 
under the proposed $d$-continuity method. 

The above numerical examples clearly demonstrate that the 
proposed multi-time-step coupling methods can handle 
any decomposition of the computational domain: either 
the subdomains are contiguous or non-contiguous; whether 
the decomposition is based on the physics of the problem 
or based on numerical performance; or whether the decomposition 
is done manually by the user or obtained from a graph-partitioning 
software package.

\section{CONCLUDING REMARKS}
\label{Sec:First_CR}
We presented a stable multi-time-step computational 
framework for transient advective-diffusive-reactive 
systems. The computational domain can be divided 
into an arbitrary number of subdomains. Different 
time-stepping schemes under the trapezoidal family 
can be used in different subdomains. Different time-steps 
and different numerical formulations can be employed in 
different subdomains. Unlike many of the prior works on 
multi-time-step methods (e.g., staggered schemes proposed 
in \citep{Piperno_IJNMF_1997_v25_p1207}), no preferential 
treatment is given to the subdomain with the largest subdomain 
time-step, and thereby eliminating the associated 
subdomain-dependent solutions. 

Under the framework, we proposed two different 
monolithic coupling methods, which differ in the way 
compatibility conditions are enforced along the 
subdomain interface. 
Under the first method (i.e., $d$-continuity method), 
the continuity of the primary variable is enforced 
along the subdomain interface at every system time-step. 
An attractive feature of the $d$-continuity method is 
that, by construction, there is no drift in the primary 
variable along the subdomain interface. However, one 
cannot couple explicit and implicit schemes under the 
$d$-continuity method. But this method has good stability 
characteristics. 
The second method is based on an extension of the classical 
Baumgarte stabilization \citep{Baumgarte_CMAME_1972_v1_p1,
Nakshatrala_Prakash_Hjelmstad_JCP_2009_v228_p7957} to 
first-order transient systems. Under this method one can couple 
explicit and implicit schemes. However, there can be drift in the 
primary variable along the subdomain interface. But this drift is 
bounded and small, which we have shown both theoretically 
and numerically. 
The other salient features of the proposed coupling 
methods are as follows: There is no limitation on the 
number of subdomains or on the subcycling ratios 
$\eta_i$. Since no preference is given to any 
subdomain, the numerical solutions under the 
proposed coupling methods will not be affected 
by the way the computational domain is decomposed 
into subdomains. This is also evident from the numerical 
results presented in this paper. The coupling methods 
are shown to be stable, which has been illustrated both 
mathematically and numerically. 

Based on the above discussion, we shall make the 
following two recommendations for a multi-time-step 
analysis of first-order transient systems:
\begin{enumerate}[(i)]
\item If it is not needed to couple explicit/implicit 
	time integrators,
  but one just wants to use different time-steps and different 
  numerical formulations in different regions, then it is 
  recommended to use the proposed $d$-continuity method. 
  If one wants to couple explicit and implicit schemes, 
  then one has to use the proposed coupling method based on 
  Baumgarte stabilization. 
\item Accuracy can be improved by decreasing the system time-step. 
\end{enumerate}
A possible research work can be towards the implementation 
of the proposed multi-time-step coupling methods in a parallel 
computing environment and on graphical processing units (GPUs).

\section*{APPENDIX: A COMPACT NOTATION FOR COMPUTER IMPLEMENTATION}
\label{Sec:Appendix}
Herein, we present a compact matrix form for the 
proposed coupling methods. In fact, we present 
in a more general setting by considering nonlinear 
first-order transient DAEs of the following form:
\begin{align}
\label{Eqn:Appendix_Gov}
	&\boldsymbol{M}_i \dot{\boldsymbol{c}}_i 
	\left( t \right) =
	\boldsymbol{h}_i \left( \boldsymbol{c}_i \left( t\right),
	t\right) + \boldsymbol{C}_i^{\mathrm{T}} \boldsymbol{\lambda}\left( t \right) \quad \forall i \\
	&\sum_{i = 1}^{\mathcal{S}} \boldsymbol{C}_i \boldsymbol{c}_i\left( t \right) = \boldsymbol{0}
\end{align}
We will employ a Newton-Raphson-based approach to 
solve the given system of equation.
Other methods of solving nonlinear equations (e.g., 
Picard method) can also be utilized. However for 
simplicity of the presentation, we shall ignore 
further details on solution techniques for solving 
nonlinear equations. 

The resulting system of equations will have 
to be solved in iterations until some suitable 
convergence condition is met. Let us denote 
the differentiation operator with respect to 
$\boldsymbol{u}$ by $\mathrm{D}$ (i.e., $\mathrm{D}:= \partial / \partial\boldsymbol{u}$). Let $\Box^{\left( n + (j+1)/\eta_i\right)}_{i,\nu}$ 
denote the nodal values of $\Box$ in the $i$-th 
subdomain, at time-level $n + \frac{j+1}{\eta_i}$, 
and after $\nu$ iterations.
The following notation will be useful: 
\begin{align}
	\label{Eqn:Notation_D}
  &\boldsymbol{h}_{i,\nu}^{\left( n + \frac{j+1}{\eta_i}\right)} = 
  \boldsymbol{h}_i \left( \boldsymbol{d}_{i,\nu}^{\left( n + \frac{j+1}{\eta_i}\right)}, t^{\left( n + \frac{j+1}{\eta_i}\right)}\right) \\
  &\mathrm{D} \boldsymbol{h}_{i,\nu}^{\left( n + \frac{j+1}{\eta_i}\right)}= 
  \mathrm{D} \boldsymbol{h}_i \left( \boldsymbol{d}_{i,\nu}^{\left( n + \frac{j+1}{\eta_i}\right)}, t^{\left( n + \frac{j+1}{\eta_i}\right)}\right)
\end{align} 
The unknowns at all subdomain time-levels for the 
$i$-th subdomain and for a given Newton-Raphson 
iteration number $\nu$ can be grouped as follows:
\begin{align}
  \label{Eqn:S3_Notation_X}
  \boldsymbol{X}_{i,\nu+1}^{\left(n + \frac{j}{\eta_i}\right)} = \left[ 
    \begin{array} {c}
      \boldsymbol{v}_{i,\nu+1}^{\left( n + \frac{j}{\eta_i}\right)} \\
      \boldsymbol{d}_{i,\nu+1}^{\left( n + \frac{j}{\eta_i}\right)}
    \end{array} \right] ,\quad
  \mathbb{X}_{i,\nu+1}^{\left(n + 1\right)} = \left[ 
    \begin{array} {c}
      \boldsymbol{X}_{i,\nu+1}^{\left(n + \frac{1}{\eta_i} \right)} \\
      \boldsymbol{X}_{i,\nu+1}^{\left(n + \frac{2}{\eta_i} \right)} \\
      \vdots \\
      \boldsymbol{X}_{i,\nu+1}^{\left(n + 1\right)}
    \end{array} \right], \quad
  \mathbb{X}_{\nu+1}^{\left(n + 1\right)} = \left[ 
    \begin{array} {c}
      \mathbb{X}_{1,\nu+1}^{\left(n + 1 \right)} \\
      \mathbb{X}_{2,\nu+1}^{\left(n + 1 \right)} \\
      \vdots \\
      \mathbb{X}_{\mathcal{S},\nu+1}^{\left(n + 1\right)}
    \end{array} \right]
\end{align}
Using equation \eqref{Eqn:Notation_D} and the trapezoidal 
time-stepping schemes, the following linearized matrices 
can be constructed: 
\begin{align}
\label{Eqn:S3_Left_Right_C}
 \mathbb{L}_{i,\nu}^{\left( n + \frac{j+1}{\eta_i}\right)} = \left[ \begin{array} {c c}
 		\boldsymbol{M}_i & -\mathrm{D} \boldsymbol{h}_{i,\nu}^{\left( n + \frac{j+1}{\eta_i}\right)} \\
		-\vartheta_i \Delta t_i \boldsymbol{I}_i & \boldsymbol{I}_i 
 \end{array} \right], \quad
 \mathbb{R}_i = \left[ \begin{array} {c c}
 		\mathcal{O}_i & \mathcal{O}_i \\
		\left( 1 -\vartheta_i \right) \Delta t_i \boldsymbol{I}_i & \boldsymbol{I}_i 
 \end{array} \right], \quad
 \mathbb{C}_i = \left[ \boldsymbol{C}_i \quad \mathrm{O}_i \right]
\end{align}
where $\mathcal{O}_i$ and $\boldsymbol{I}_i$ are, 
respectively, zero matrix and identity matrix of 
size $N_i \times N_i$, and the matrix $\mathrm{O}_i$ 
is a zero matrix of size $N_{\lambda} \times N_i$.
The forcing function and the results from the 
previous iteration can be compactly assumed into 
the following vector:
\begin{align}
	\boldsymbol{F}_{i,\nu}^{\left( n + \frac{j+1}{\eta_i}\right)} = 
	\left[ \begin{array}{c}
		\boldsymbol{h}_{i,\nu}^{\left( n + \frac{j+1}{\eta_i}\right)} - 
		\mathrm{D}\boldsymbol{h}_{i,\nu}^{\left( n + \frac{j+1}{\eta_i}\right)} \boldsymbol{d}_{i,\nu}^{\left( n + \frac{j+1}{\eta_i}\right)} \\
		\boldsymbol{0}
	\end{array} \right]
\end{align}
Now, let the square matrices $\mathbb{A}_{i,\nu}$ and
$\mathbb{A}_{\nu}$, and column vectors $\mathbb{F}_{\nu}^{\left( n + 1 \right)}$ and $\mathbb{F}_{\nu}^{\left( 
n + 1 \right)}$ be defined as below:
\begin{align}
\label{Eqn:App_temp_1}
	&\mathbb{A}_{i,\nu}^{(n+1)} = \underbrace{\left[ \begin{array}{c c c c}
	\mathbb{L}_{i, \nu}^{\left( n + \frac{1}{\eta_i}\right)} & & & \\
	-\mathbb{R}_i & \mathbb{L}_{i, \nu}^{\left( n + \frac{2}{\eta_i}\right)} & & \\
	& \ddots & \ddots & \\
	& & -\mathbb{R}_i & \mathbb{L}_{i, \nu}^{\left( n + 1\right)}
	\end{array} \right]}_{2\eta_i N_i}, \quad
	\mathbb{A}_{\nu}^{(n+1)} = \left[ \begin{array}{c c c c}
	\mathbb{A}_{1, \nu}^{(n+1)} & & & \\
	& \mathbb{A}_{2 , \nu}^{(n+1)} & & \\
	& & \ddots & \\
	& & & \mathbb{A}_{\mathcal{S}, \nu}^{(n+1)}
	\end{array} \right] \\
	&\mathbb{F}_{i, \nu}^{\left( n + 1 \right)} = \left[ \begin{array}{c}
	\boldsymbol{F}_{i,\nu}^{\left( n + \frac{1}{\eta_i} \right)} + \mathbb{R}_i  \boldsymbol{X}_i^{\left( n \right)} + \mathbb{C}_i^{\mathrm{T}} \boldsymbol{\lambda}^{\left( n \right)} \\
	\boldsymbol{F}_{i,\nu}^{\left( n + \frac{2}{\eta_i}\right)} + \mathbb{C}_i^{\mathrm{T}} \boldsymbol{\lambda}^{\left( n \right)} \\
	\vdots \\
	\boldsymbol{F}_{i,\nu}^{\left( n + 1 \right)} + \mathbb{C}_i^{\mathrm{T}} \boldsymbol{\lambda}^{\left( n \right)} 
	\end{array} \right], \quad
	\mathbb{F}_{\nu}^{\left( n + 1 \right)} = \left[ \begin{array}{c}
	\mathbb{F}_{1, \nu}^{\left( n + 1 \right)} \\
	\mathbb{F}_{2, \nu}^{\left( n + 1 \right)} \\
	\vdots \\
	\mathbb{F}_{\mathcal{S}, \nu}^{\left( n + 1 \right)}
	\end{array} \right]
\end{align}
Enforcing the algebraic constraints can 
be done using the following matrices:
\begin{align}
  \mathbb{B}_i &= \underbrace{\left[ \mathrm{O}_i 
      \quad \mathrm{O}_i \quad \hdots \quad 
      \overbrace{\mathrm{O}_i \quad 
        \boldsymbol{C}_i}^{2N_i}\right]}_{2\eta_i N_{i}}  
  \quad \mbox{for $d$-continuity method} \\
  \mathbb{B}_i &= \underbrace{\left[ \mathrm{O}_i 
      \quad \mathrm{O}_i \quad \hdots \quad 
      \overbrace{\boldsymbol{C}_i \quad
        \frac{\alpha}{\Delta t}\boldsymbol{C}_i}^{2N_i}\right]}_{2\eta_i N_{i}} 
  \quad \mbox{for Baumgarte stabilization method}
\end{align}
Using the notation above, one can then construct the 
following augmented matrix:
\begin{align}
  &\mathbb{B} = \left[\mathbb{B}_1 \quad 
    \mathbb{B}_2 \quad \hdots \quad 
    \mathbb{B}_\mathcal{S} \right]
\end{align}
The algebraic constraint (for both $d$-continuity 
and Baumgarte stabilization) can be compactly 
written as follows:
\begin{align}
	\mathbb{B} \mathbb{X}_{\nu+1}^{\left( n + 1\right)} = \boldsymbol{0}
\end{align}
We define the matrix $\mathbb{C}$ as below:
\begin{align}
	\mathbb{C} = \underbrace{\left[ \quad -\frac{1}{\eta_1} 
	\mathbb{C}_1 \quad 
	-\frac{2}{\eta_1} \mathbb{C}_1 \quad \hdots \quad -\mathbb{C}_1\quad | \quad \hdots \quad | \quad -\frac{1}{\eta_\mathcal{S}} \mathbb{C}_\mathcal{S} \quad 
	-\frac{2}{\eta_\mathcal{\mathcal{S}}} \mathbb{C}_\mathcal{S} \quad \hdots \quad -\mathbb{C}_\mathcal{S} \quad \right]}_{\sum_{i = 1}^{\mathcal{S}} 2\eta_i N_i}
\end{align}
Finally, time marching can be performed by solving the
following equation:
\begin{align}
\label{Eqn:Monolithic_Final}
	\left[ \begin{array}{c | c} 
	\mathbb{A}_{\nu}^{(n+1)} & \mathbb{C}^{\mathrm{T}} \\ \hline
	\mathbb{B} & \mathbb{O}
	\end{array} \right] \left[ \begin{array}{c} 
	\mathbb{X}_{\nu+1}^{\left( n + 1 \right)} \\ \hline
	\boldsymbol{\lambda}_{\nu+1}^{\left( n + 1\right)} - \boldsymbol{\lambda}^{\left( n \right)}
	\end{array} \right] = \left[ \begin{array}{c}
	\mathbb{F}_{\nu}^{\left( n + 1\right)} \\ \hline
	\boldsymbol{0}
	\end{array} \right]
\end{align}
which gives the values of the nodal concentrations 
and the corresponding rates within a Newton-Raphson 
iteration for all subdomains and at all subdomain 
time-levels within a system time-step. The solution
procedure is outlined in Algorithm \ref{Alg:Algorithm_Appendix}.

\begin{algorithm}[ht]
\caption{Multi-time-step transient analysis using the proposed coupling methods.\label{Alg:Algorithm_Appendix}}
\begin{algorithmic}[1]
	\STATE Generate subdomain specific matrices and vectors $\boldsymbol{M}_i$, 
	$\boldsymbol{C}_i$, and $\boldsymbol{h}_i$
	\STATE Read time integration parameters: $\Delta t$, $\Delta t_i$ ($i = 1, \cdots, \mathcal{S}$), and $\alpha$ (in the case of Baumgarte stabilization)
	\ENSURE Time integration parameters should satisfy the criteria 
	given in Theorems \ref{Theorem:First_d_continuity} and \ref{Thm:S4_Stability_Baumgarte}
	\STATE Read the initial values $\boldsymbol{d}_i^{(0)}$ ($i = 1, \cdots, 
	\mathcal{S}$)
	\STATE Calculate the initial values for the Lagrange multipliers, $\boldsymbol{\lambda}^{(0)}$, by solving the following system: \\ $ \left( \sum_{i = 1}^{\mathcal{S}} \boldsymbol{C}_i \boldsymbol{M}_i^{-1} \boldsymbol{C}_i^{\mathrm{T}}\right) \boldsymbol{\lambda}^{(0)} = -   \sum_{i = 1}^{\mathcal{S}} 
	\boldsymbol{C}_i \boldsymbol{M}_i^{-1} \boldsymbol{h}_i \left( \boldsymbol{d}_i^{(0)}, t = 0\right)$
	\STATE Calculate the initial values of the rate variables: \\
	$\boldsymbol{M}_i \boldsymbol{v}_i^{(0)} = \boldsymbol{h}_i \left( \boldsymbol{d}_i^{(0)}, t = 0\right) + \boldsymbol{C}_i^{\mathrm{T}} \boldsymbol{\lambda}^{(0)} \quad \forall i = 1,...,\mathcal{S}$ 
	\STATE Specify some convergence criteria (e.g., maximum number of iterations, 
	maximum tolerance for concentrations, rate variables, or the Lagrange multipliers)	
	\FOR{$n = 1, \cdots, \mathcal{N}$}
		\STATE Initiate the iteration counter: $\nu \leftarrow 0$
		\STATE Assign values for $\boldsymbol{d}_{i,0}^{(n + j/\eta_i)}$ (for
		$i = 1, \cdots , \mathcal{S}$ and $j = 1, \cdots , \eta_i$) \COMMENT{a consistent initial guess for the nodal concentrations}
		\REPEAT
			\STATE Set $\nu \leftarrow \nu + 1$
			\STATE Initiate/update $\mathbb{A}_{\nu-1}^{(n)}$ according to equation \eqref{Eqn:App_temp_1}
			\STATE Initiate/update $\mathbb{F}_{\nu-1}^{(n)}$ according to equation \eqref{Eqn:App_temp_1}
			\STATE Solve for $\mathbb{X}_{\nu}^{(n)}$ and 
			$\boldsymbol{\lambda}_{\nu}^{(n)}$ using equation 										\eqref{Eqn:Monolithic_Final}
		\UNTIL{convergence criteria are met}
	\ENDFOR
\end{algorithmic}
\end{algorithm}

\section*{ACKNOWLEDGMENTS}
The authors acknowledge the support of the National Science 
Foundation under Grant no. CMMI 1068181. The opinions 
expressed in this paper are those of the authors and do 
not necessarily reflect that of the sponsors. 

\bibliographystyle{plainnat}
\bibliography{Master_References,Books}

\begin{figure}
  \psfrag{1}{$\omega_1$}
  \psfrag{2}{$\omega_2$}
  \psfrag{3}{$\omega_3$}
  \psfrag{...}{$\cdots$}
  \psfrag{S}{$\omega_{\mathcal{S}}$}
  \psfrag{omega}{$\Omega$}
  \psfrag{i}{Interface interaction}
  \psfrag{l}{$\boldsymbol{\lambda}$}
  \includegraphics[scale = 0.75]{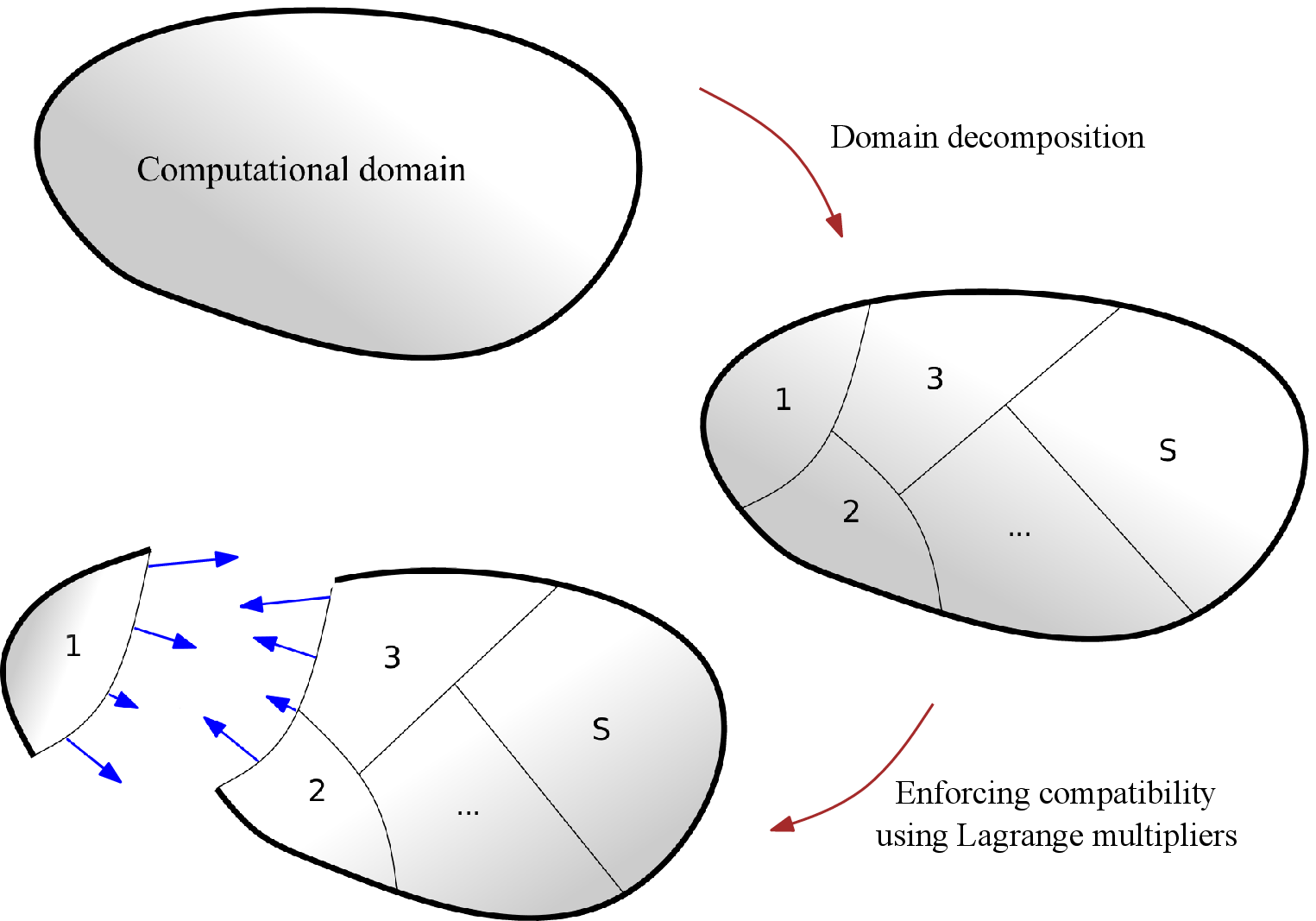}
  \caption{A pictorial description of computational 
    domain and its decomposition into subdomains, 
    subdomain interface, and interface interactions (i.e., 
    Lagrange multipliers). \label{Fig:dual_Schur}}
\end{figure}

\begin{figure}
  \psfrag{1}{Subdomain 1}
  \psfrag{2}{Subdomain 2}
  \psfrag{S}{Subdomain $\mathcal{S}$}
  \psfrag{ex}{Information exchange}
  \psfrag{system}{System time-step $\Delta t$}
  \psfrag{tn}{$t_{n}$}
  \psfrag{tn+1}{$t_{n+1}$}
  \psfrag{dt1}{$\Delta t_1$}
  \psfrag{dt2}{$\Delta t_2$}
  \psfrag{dtS}{$\Delta t_{\mathcal{S}}$}
  \psfrag{vdot}{$\vdots$}
  \psfrag{en}{enforcement of compatibility conditions 
    at system time-levels}
  \includegraphics[scale=0.8]{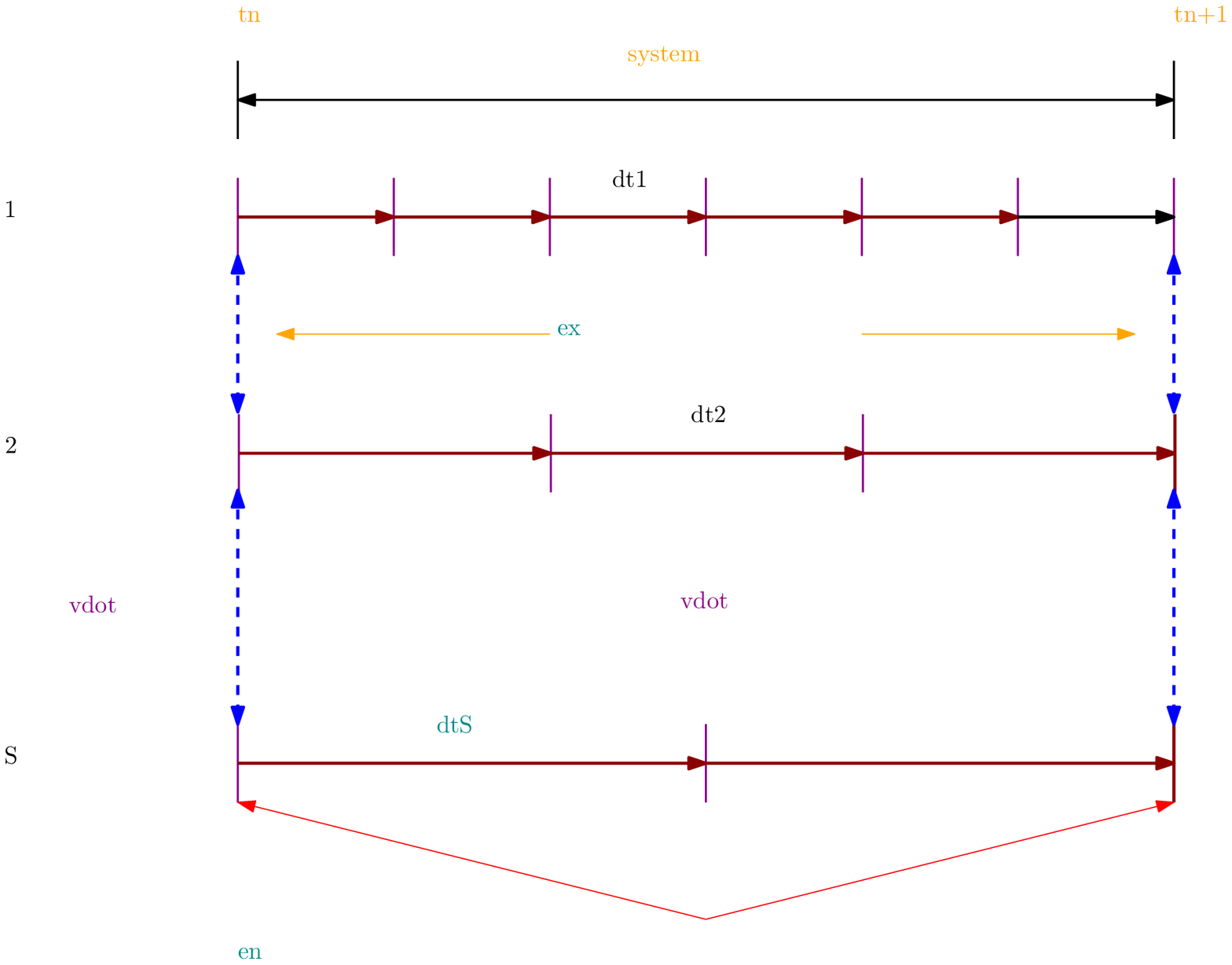}
  \caption{A pictorial description of time levels ($t_n$), 
    system time-step $(\Delta t)$, subdomain time-step 
    $(\Delta t_i)$, and subcycling. By subcycling in 
    the $i$-th subdomain we mean that $\Delta t_i < 
    \Delta t$.
    \label{Fig:Monolithic_multi_time_step_notation}}
\end{figure}

\begin{figure}[ht]
  \psfrag{m1}{$m_1$}
  \psfrag{m2}{$m_2$}
  \psfrag{f1}{$f_1$}
  \psfrag{f2}{$f_2$}
  \psfrag{k1}{$k_1$}
  \psfrag{k2}{$k_2$}
  \psfrag{l1}{$\lambda$}
  \psfrag{l2}{$-\lambda$}
  \includegraphics[scale = 0.8]{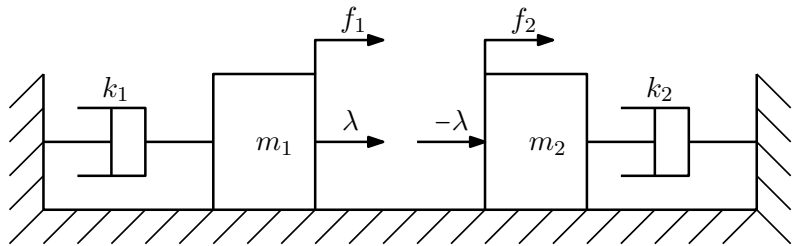}
  \caption{Split degree-of-freedom problem: A pictorial 
    description. \label{Fig:SDOF_problem}}
\end{figure}

\begin{figure}
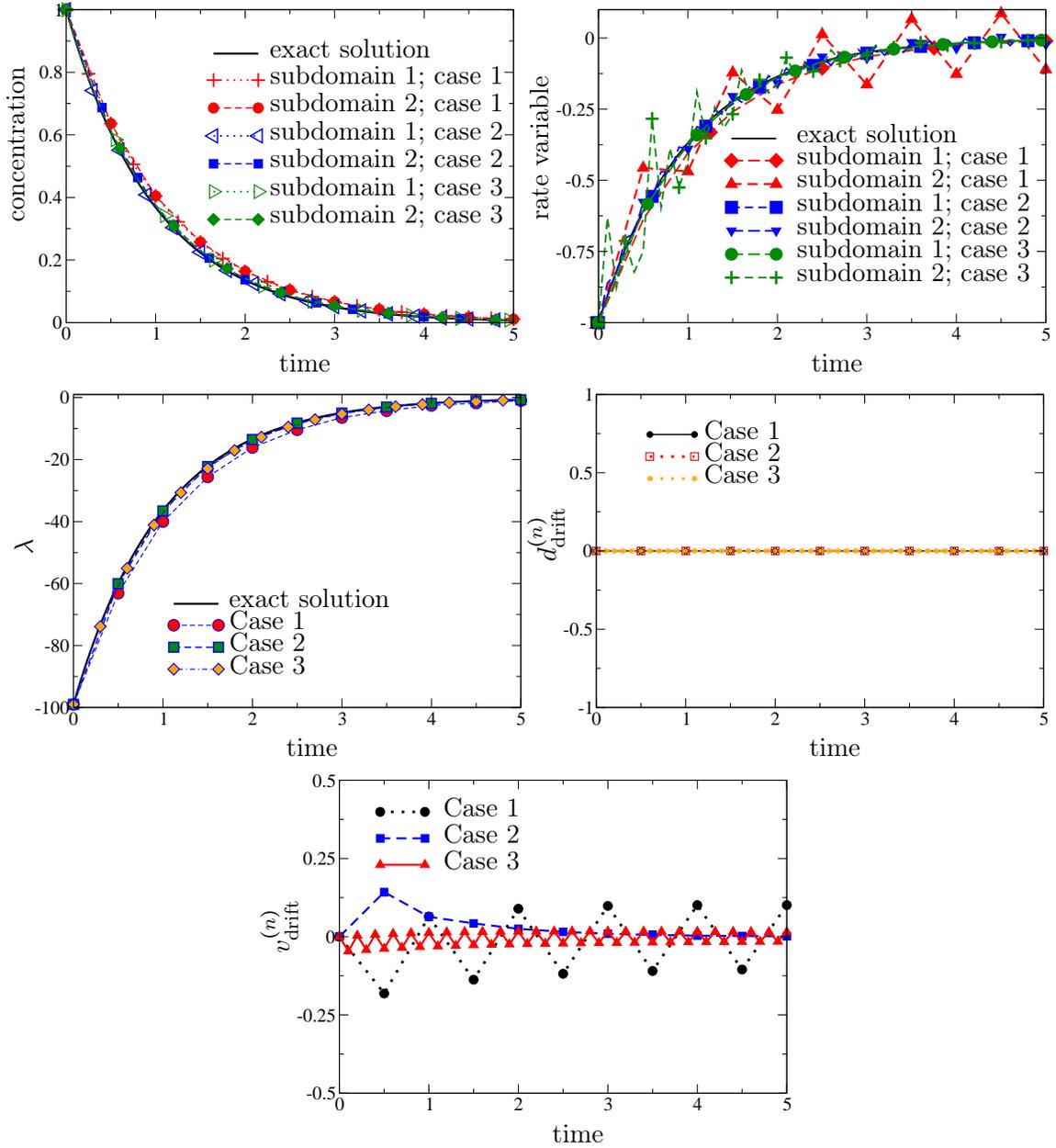

  \centering
  \psfrag{time}{time}
  \psfrag{cons}{concentration}
  \psfrag{L}{$\lambda$}
  \psfrag{drift}{$ d_{\mathrm{drift}}^{(n)}$}
  \psfrag{v_drift}{$ v_{\mathrm{drift}}^{(n)}$}
  \psfrag{rate}{rate variable}
  \psfrag{exact}{exact solution}
  \psfrag{s1_1}{subdomain 1; case 1}
  \psfrag{s1_2}{subdomain 2; case 1}
  \psfrag{s2_1}{subdomain 1; case 2}
  \psfrag{s2_2}{subdomain 2; case 2}
  \psfrag{s3_1}{subdomain 1; case 3}
  \psfrag{s3_2}{subdomain 2; case 3}
  \psfrag{s1}{Case 1}
  \psfrag{s2}{Case 2}
  \psfrag{s3}{Case 3}
  \subfigure{
    \includegraphics[scale = 0.3, clip]{SDOF_d_cons.eps}}
  \subfigure{
    \includegraphics[scale = 0.3, clip]{SDOF_d_rate.eps}}
  \subfigure{
    \includegraphics[scale = 0.3, clip]{SDOF_d_lambda.eps}}
  \subfigure{
    \includegraphics[scale = 0.3, clip]{SDOF_d_drift.eps}}
    \subfigure{
    \includegraphics[scale = 0.3, clip]{SDOF_d_v_drift.eps}}
  \caption{Split degree-of-freedom problem: 
     We have employed 
    the multi-time-step coupling method based on $d$-continuity 
    method. The values of concentrations, rate variables, 
    Lagrange multipliers, and drifts are compared with their 
    respective exact solutions. 
    It can be seen that the numerical results obtained using 
    the proposed coupling method based on $d$-continuity 
    method show very good compatibility with the exact
    solution. For values of time-integration parameters
    see Table \ref{Tbl:SDOF_d_con}. Values for the 
    time-integration parameters are given in Table
    \ref{Tbl:SDOF_d_con}. 
    \label{Fig:SDOF_d_continuity}}
\end{figure}

\begin{figure}
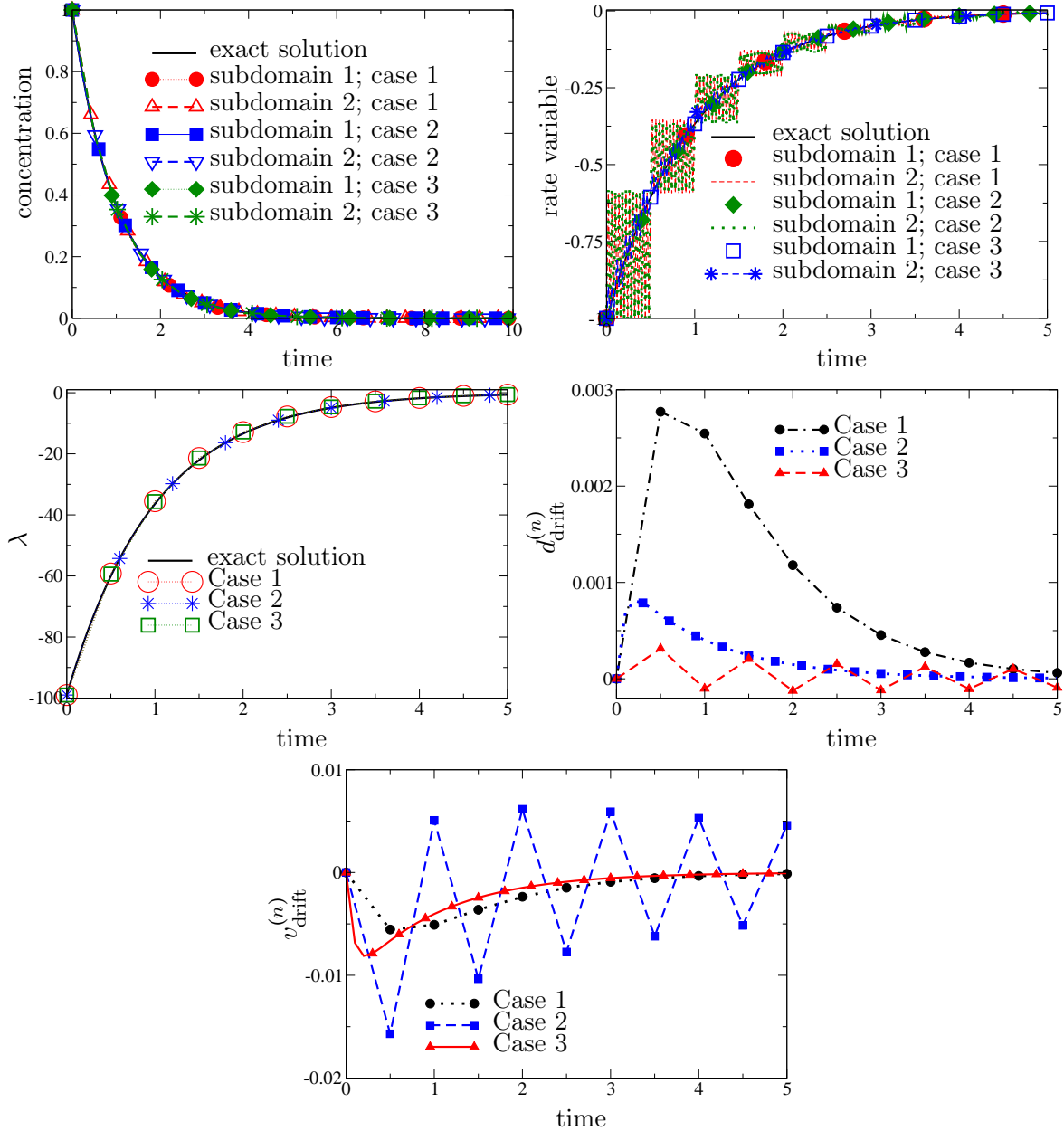

  \centering
  \psfrag{time}{time}
  \psfrag{cons}{concentration}
  \psfrag{L}{$\lambda$}
  \psfrag{rate}{rate variable}
  \psfrag{drift}{$d_{\mathrm{drift}}^{(n)}$}
  \psfrag{v_drift}{$v_{\mathrm{drift}}^{(n)}$}
  \psfrag{exact}{exact solution}
  \psfrag{s1_1}{subdomain 1; case 1}
  \psfrag{s1_2}{subdomain 2; case 1}
  \psfrag{s2_1}{subdomain 1; case 2}
  \psfrag{s2_2}{subdomain 2; case 2}
  \psfrag{s3_1}{subdomain 1; case 3}
  \psfrag{s3_2}{subdomain 2; case 3}
  \psfrag{s1}{Case 1}
  \psfrag{s2}{Case 2}
  \psfrag{s3}{Case 3}
  \subfigure{
    \includegraphics[scale = 0.3,clip]{SDOF_Baumgarte_cons.eps}
  }
  \subfigure{
    \includegraphics[scale = 0.3,clip]{SDOF_Baumgarte_rate.eps}
  }
  \subfigure{
    \includegraphics[scale = 0.3,clip]{SDOF_Baumgarte_lambda.eps}
  }
  \subfigure{
    \includegraphics[scale = 0.3,clip]{SDOF_Baumgarte_drift.eps}
  }
  \subfigure{
    \includegraphics[scale = 0.3,clip]{SDOF_Baumgarte_v_drift.eps}
  }
  \caption{Split degree-of-freedom problem: 
     The values of 
    concentrations, rate variables, Lagrange multipliers, 
    and drifts are compared with their respective exact solutions. 
    In this problem Baumgarte stabilization is used.  As it can 
    be observed, the accuracy can be improved by decreasing 
    the system time-step and increasing the Baumgarte stabilization 
    parameter $\alpha$. Note that there is no significant    	
    drift in the numerical solutions. Values for time-integration
    parameters are given in Table \ref{Tbl:SDOF_Baumgarte}.
    \label{Fig:SDOF_Baumgarte}}
\end{figure}

\begin{figure}
\centering
\psfrag{time-step}{$\Delta t$}
\psfrag{subdomain}{Subdomains 1 and 2}
\psfrag{s1}{Subdomain 1}
\psfrag{s2}{Subdomain 2}
\psfrag{error}{Error in concentration}
\psfrag{sis2}{slope = $2$}
\subfigure[$d$-continuity method]{
\includegraphics[scale = 0.4, clip]{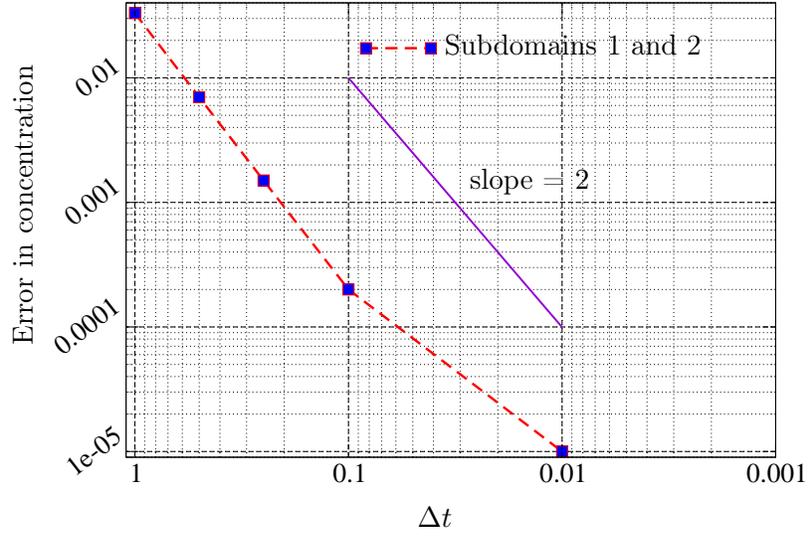}
}
\subfigure[Baumgarte stabilization method]{
\includegraphics[scale = 0.4, clip]{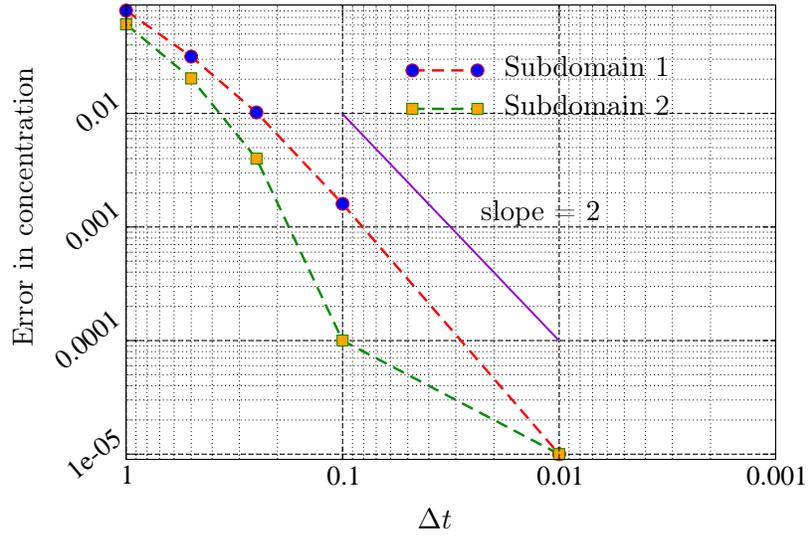}
}
\caption{Split degree-of-freedom problem: 
In these figures
absolute error vs. system time-step at $t = 1$ is plotted. 
In all cases, the subdomain time-steps are $\Delta t_i = 0.01, \; i=1,2$.
All subdomains are integrated using the midpoint rule 
($\vartheta_i = 1/2, \; i=1,2$). The Baumgarte stabilization 
parameter is $\alpha = 1$. These figures show the convergence
of the proposed method at a desirable rate; despite
subcycling, the convergence rate remains close to 2 (that of 
the midpoint rule).
\label{Fig:SDOF_Error}}
\end{figure}

\begin{figure}
  \centering
  \includegraphics[scale=0.8,clip]{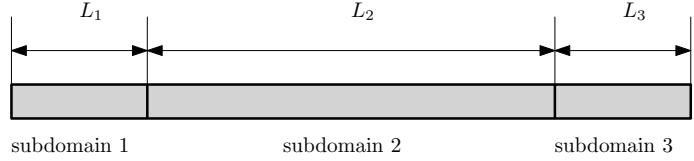}
  \caption{One-dimensional problem: The computational domain 
    is divided into three subdomains of lengths $L_1 = 0.1$, 
    $L_2 = 0.8$, and $L_3 = 0.1$. Two-node linear finite 
    elements are used in all the subdomains. The source 
    is unity in the entire domain (i.e., $f(\mathrm{x},t) 
    = 1$). It should be noted that there will be boundary 
    layers for the chosen parameters. In order to adequately 
    capture these boundary layers, we shall employ very 
    fine meshes in subdomains one and three. 
    \label{Fig:Hemker_1D}}
\end{figure}

\begin{figure}
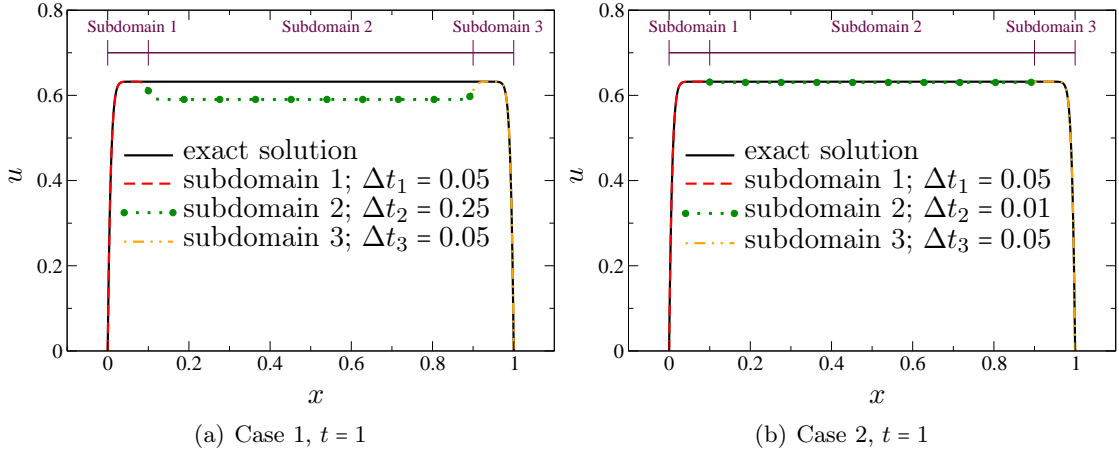

  \centering
  \psfrag{u}{$u$}
  \psfrag{x}{$x$}
  \psfrag{exact}{exact solution}
  \psfrag{sd1}{subdomain 1; $\Delta t_1 = 0.05$}
  \psfrag{sd2}{subdomain 2; $\Delta t_2 = 0.25$}
  \psfrag{sd2_2}{subdomain 2; $\Delta t_2 = 0.01$}
  \psfrag{sd3}{subdomain 3; $\Delta t_3 = 0.05$}
  \subfigure[Case 1, $t = 1$]{
    \includegraphics[scale=0.3,clip]{Hemker_1D_d_con_fig1.eps}}
  \subfigure[Case 2, $t = 1$]{
    \includegraphics[scale=0.3,clip]{Hemker_1D_d_con_fig2.eps}}
  \caption{One-dimensional problem: 
    This figure compares 
    the numerical solution obtained using the proposed 
    $d$-continuity method to the exact 
    solution. Each subdomain is meshed using $100$ 
    two-node finite elements. It should be noted that the time-stepping 
    schemes chosen are implicit in all the subdomains, 
    as it is not possible to have explicit/implicit 
    coupling under the $d$-continuity coupling method. 
    Time-integration parameters are given in 
    Table \ref{Tbl:Hemker_1D_d_con}.
    \label{Fig:1D_Hemker_d_continuity}}
\end{figure}

\begin{figure}
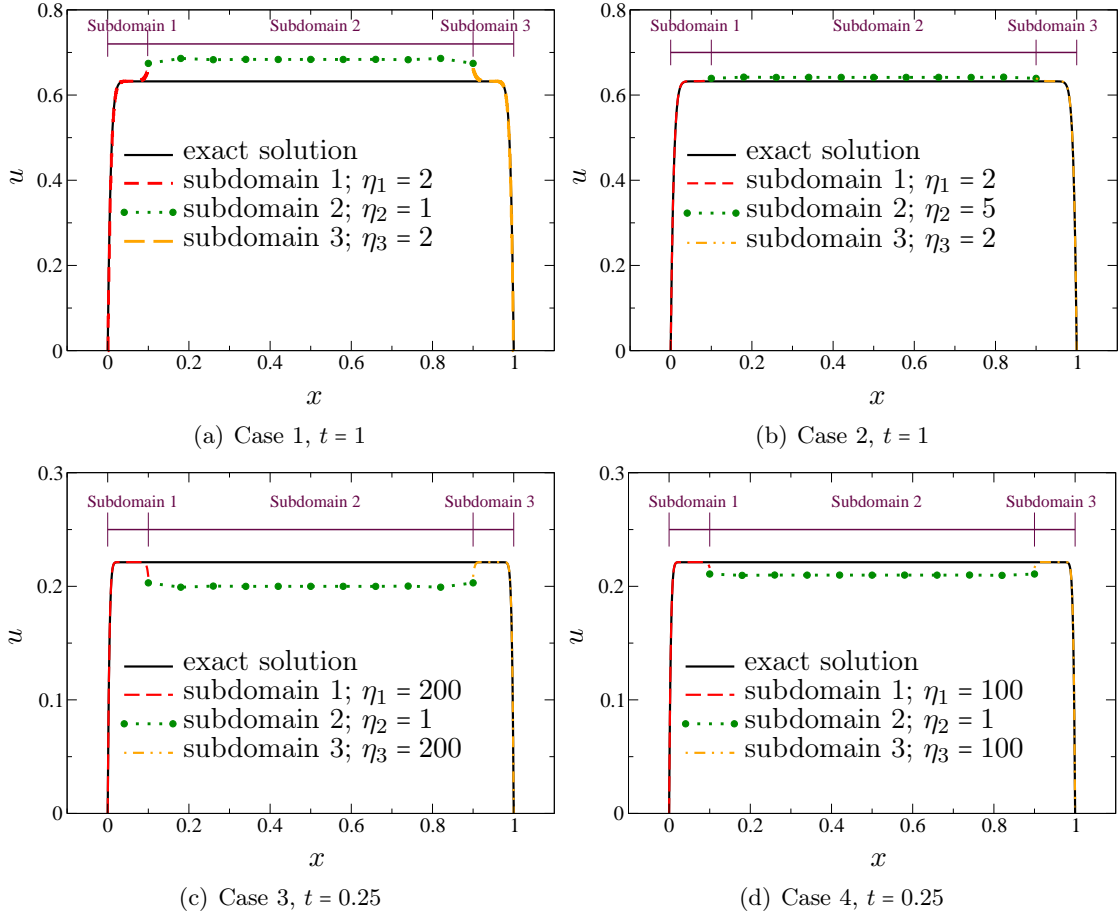

  \centering
  \psfrag{u}{$u$}
  \psfrag{x}{$x$}
  \psfrag{c}{$u$}
  \psfrag{t}{$x$}
  \psfrag{exact}{exact solution}
  \subfigure[Case 1, $t = 1$]{
    \psfrag{sd1}{subdomain 1; $\eta_1 = 2$}
    \psfrag{sd2}{subdomain 2; $\eta_2 = 1$}
    \psfrag{sd2_2}{subdomain 2; $\eta_2 = 5$}
    \psfrag{sd3}{subdomain 3; $\eta_3 = 2$}
    \includegraphics[scale = 0.3, clip]{Hemker_1D_Baumgarte_fig1.eps}}
\subfigure[Case 2, $t = 1$]{
  \psfrag{sd1}{subdomain 1; $\eta_1 = 2$}
  \psfrag{sd2}{subdomain 2; $\eta_2 = 1$}
  \psfrag{sd2_2}{subdomain 2; $\eta_2 = 5$}
  \psfrag{sd3}{subdomain 3; $\eta_3 = 2$}
  \includegraphics[scale = 0.3, clip]{Hemker_1D_Baumgarte_fig2.eps}}
\subfigure[Case 3, $t = 0.25$]{
  \psfrag{sd1}{subdomain 1; $\eta_1 = 200$}
  \psfrag{sd2}{subdomain 2; $\eta_2 = 1$}
  \psfrag{sd3}{subdomain 3; $\eta_3 = 200$}
  \includegraphics[scale = 0.3, clip]{Hemker_1D_Baumgarte_fig3.eps}}
\subfigure[Case 4, $t = 0.25$]{
  \psfrag{sd1}{subdomain 1; $\eta_1 = 100$}
  \psfrag{sd2}{subdomain 2; $\eta_2 = 1$}
  \psfrag{sd3}{subdomain 3; $\eta_3 = 100$}
  \includegraphics[scale = 0.3, clip]{Hemker_1D_Baumgarte_fig4.eps}}
\caption{One-dimensional problem: 
  The numerical 
  solution using the proposed coupling method 
  with Baumgarte stabilization is shown in this figure. As it 
  was shown in theorem \ref{Thm:S4_Stability_Baumgarte}, 
  when conditionally stable trapezoidal schemes are used, 
  multi-time-stepping can expand the acceptable values 
  of $\alpha$ without compromising the stability of the 
  coupling method. Under the proposed coupling
  method, choosing system time-step larger than the critical
  time-step of subdomains, does not cause instability. However,
  reducing the system time-step, or opting for a larger Baumgarte
  stabilization parameter $\alpha$, improves the overall accuracy.
  Time-integration parameters in different cases of the 
  are given in Table \ref{Tbl:Hemker_1D_Baumgarte}.
\label{Fig:1D_Hemker_Baumgarte_Set1}}
\end{figure}

\begin{figure}
\centering
\psfrag{time}{time}
\psfrag{drift}{$\left\| \boldsymbol{v}_{\mathrm{drift}}^{(n)}\right\|_{2}$}
\psfrag{s2}{Case 2}
\psfrag{es1}{Case 3; observed}
\psfrag{es1_pred}{Case 3; predicted}
\includegraphics[scale = 0.4, clip]{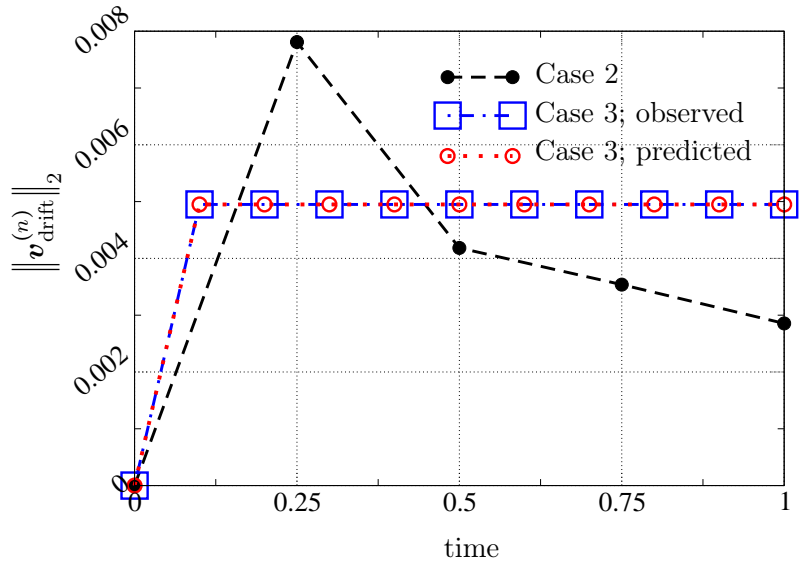}
\caption{One-dimensional problem:  This figure shows the 2-norm 
  of the drift in the rate variable under the $d$-continuity 
  method. (Note that, by algorithmic design, there will be no 
  drift in the concentration along the subdomain interface at 
  all system time levels.) 
  In case 2, subcycling and mixed time-integrators are 
used. To demonstrate the correctness of equation \eqref{Eqn:d_con_drift},
a third case is devised. For the values of the time-integration
parameters in different cases please see Table \ref{Tbl:Hemker_1D_d_con}. 
\label{Fig:1D_Hemker_d_continuity_drift}}
\end{figure}

\begin{figure}
\centering
\psfrag{tc}{Case 5; observed and predicted}
\psfrag{cb}{Case 2}
\psfrag{cc}{Case 4}
\psfrag{time}{time}
\psfrag{d_drift}{$\left\| \boldsymbol{d}_{\mathrm{drift}}^{(n)}\right\|_2$}
\psfrag{v_drift}{$\left\| \boldsymbol{v}_{\mathrm{drift}}^{(n)}\right\|_2$}
\subfigure[Drift in concentrations]{\includegraphics[scale = 0.4, clip]{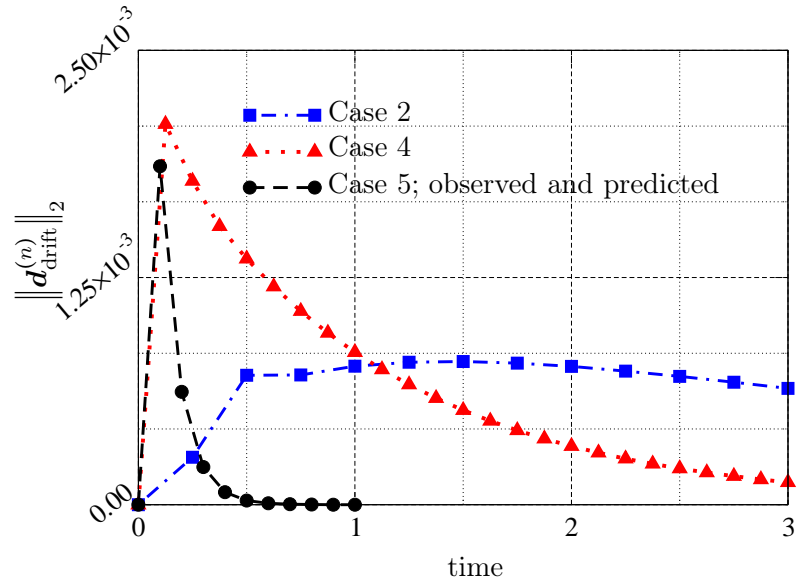}}
\subfigure[Drift in rate variables]{\includegraphics[scale = 0.4, clip]{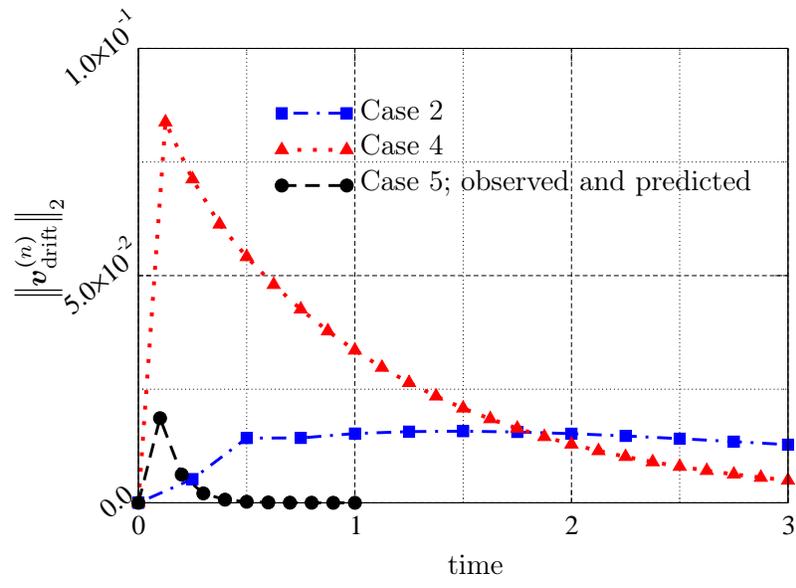}}
\caption{One-dimensional problem:  This figure shows the 
  drifts in the concentration and the rate variable for 
  various cases under the Baumgarte stabilization method. 
  As it can be observed,
the proposed method with Baumgarte stabilization enables 
explicit/implicit coupling at the expense of controlled drifts.
For the value of time-integration parameters in 
different cases see Table \ref{Tbl:Hemker_1D_Baumgarte}.
\label{Fig:1D_Hemker_Baumgarte_drift}}
\end{figure}

\begin{figure}
  \centering
  \includegraphics[scale = 0.8,clip]{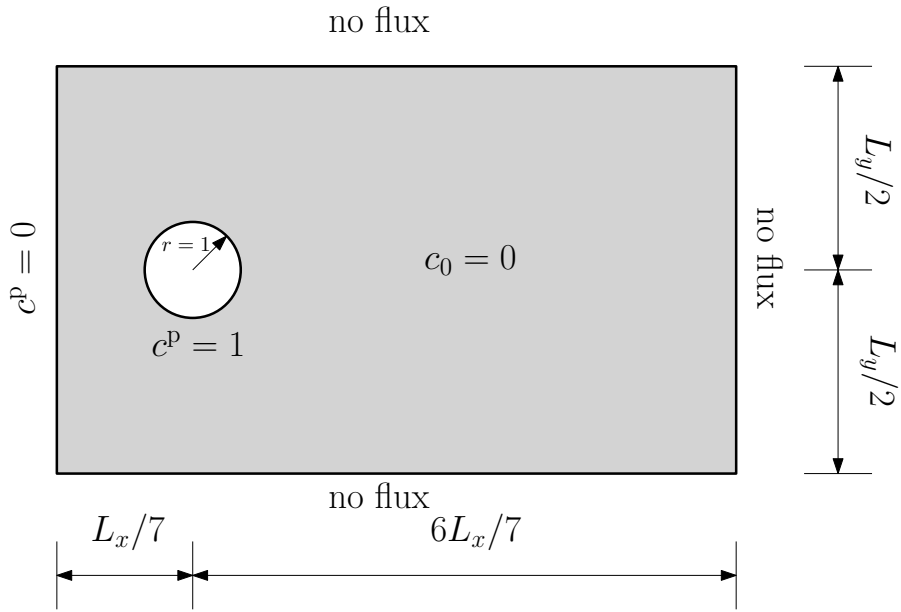}
  \caption{Two-dimensional transient Hemker problem: The 
    dimensions of the computational domain are taken as 
    $L_x = 14$ and $L_y = 8$. A circular hole is centered 
    at the origin, and has a radius of unity. Concentration 
    is unity on the circumference of the circle, and zero 
    along the left side of the domain. No-flux boundary 
    condition is enforced on the rest of the boundary. The 
    prescribed initial condition is zero. \label{Fig:Hemker_2D}}
\end{figure}

\begin{figure}
  \centering 
  \includegraphics[scale = 0.75, clip]{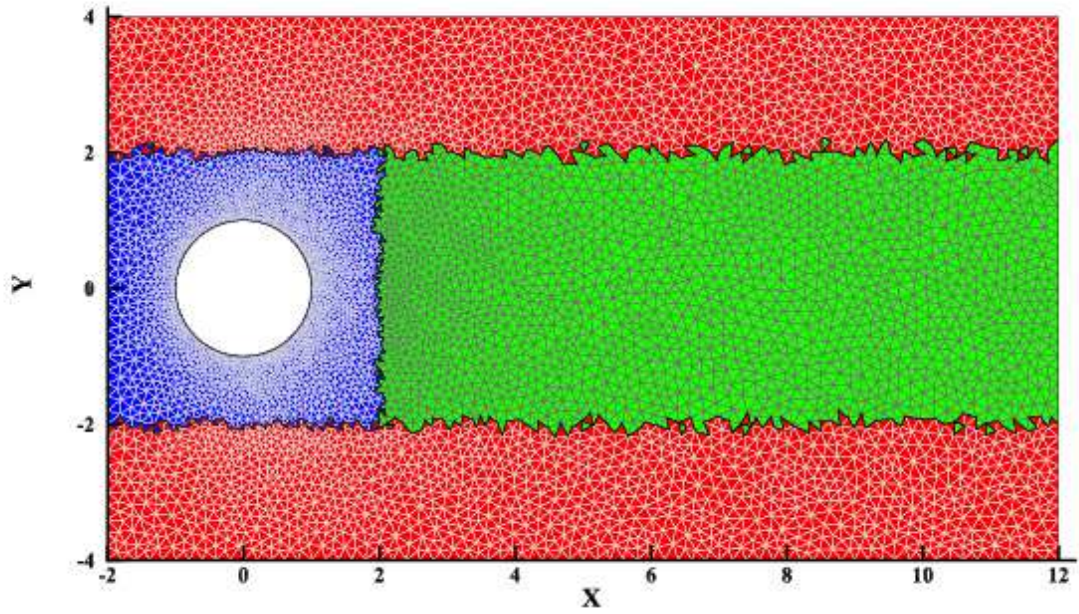}
  \caption{Two-dimensional transient Hemker problem:~This 
    figure shows the computational mesh, and the decomposition 
    of the domain into subdomains. The computational domain is 
    meshed using 11512 triangular finite elements using GMSH 
    \citep{Gmsh_paper}, and is partitioned into three subdomains. 
    The first subdomain is indicated in blue color, the second 
    subdomain is in green color, and the third subdomain is in red 
    color. (See the online version of the paper for a color picture.) 
    \label{Fig:2D_Hemker_mesh}}
\end{figure}

\begin{figure}
  \centering
  \subfigure[$d$-continuity method]{
    \includegraphics[scale = 0.32, clip]{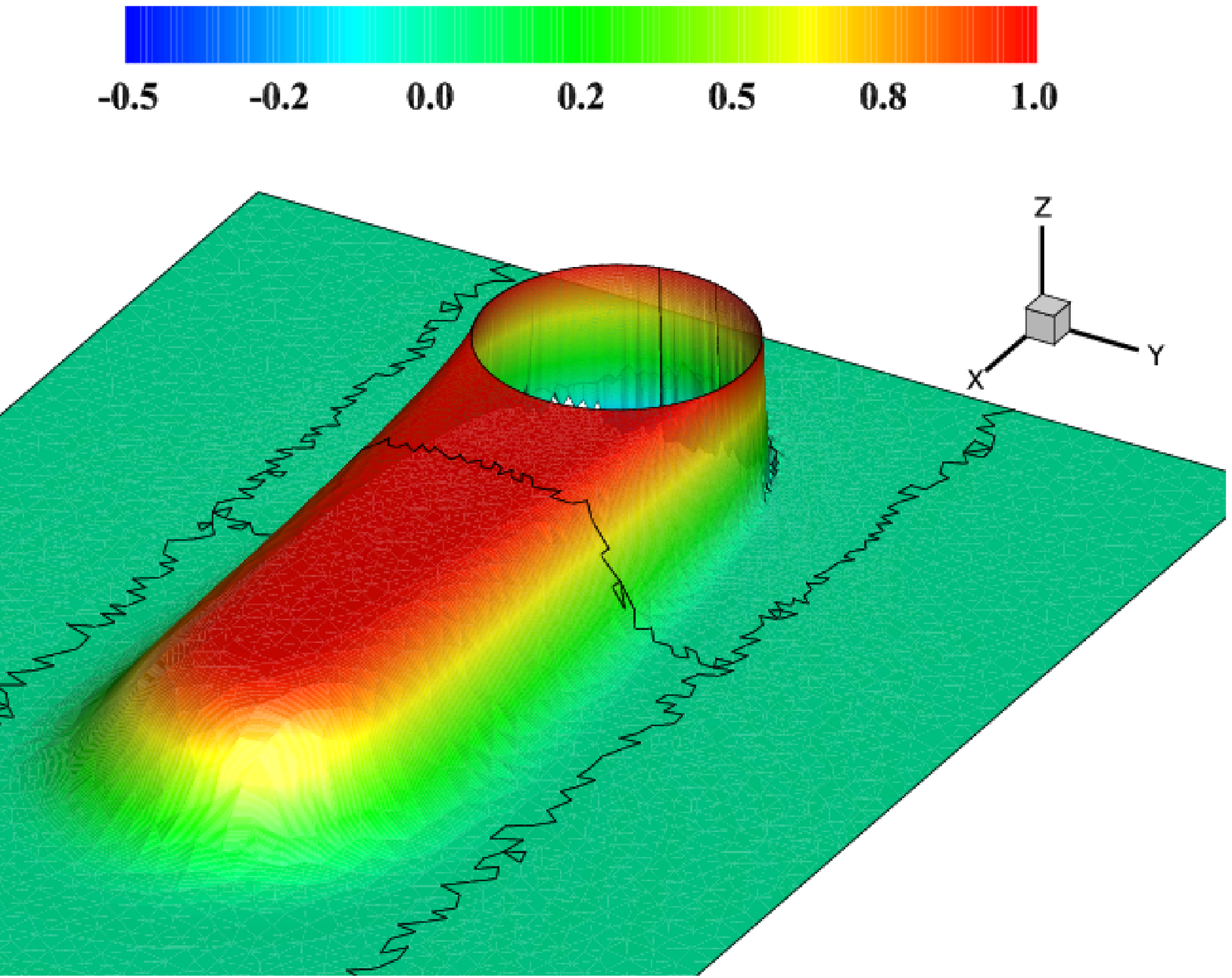}}
  \subfigure[Baumgarte stabilization]{
    \includegraphics[scale = 0.32, clip]{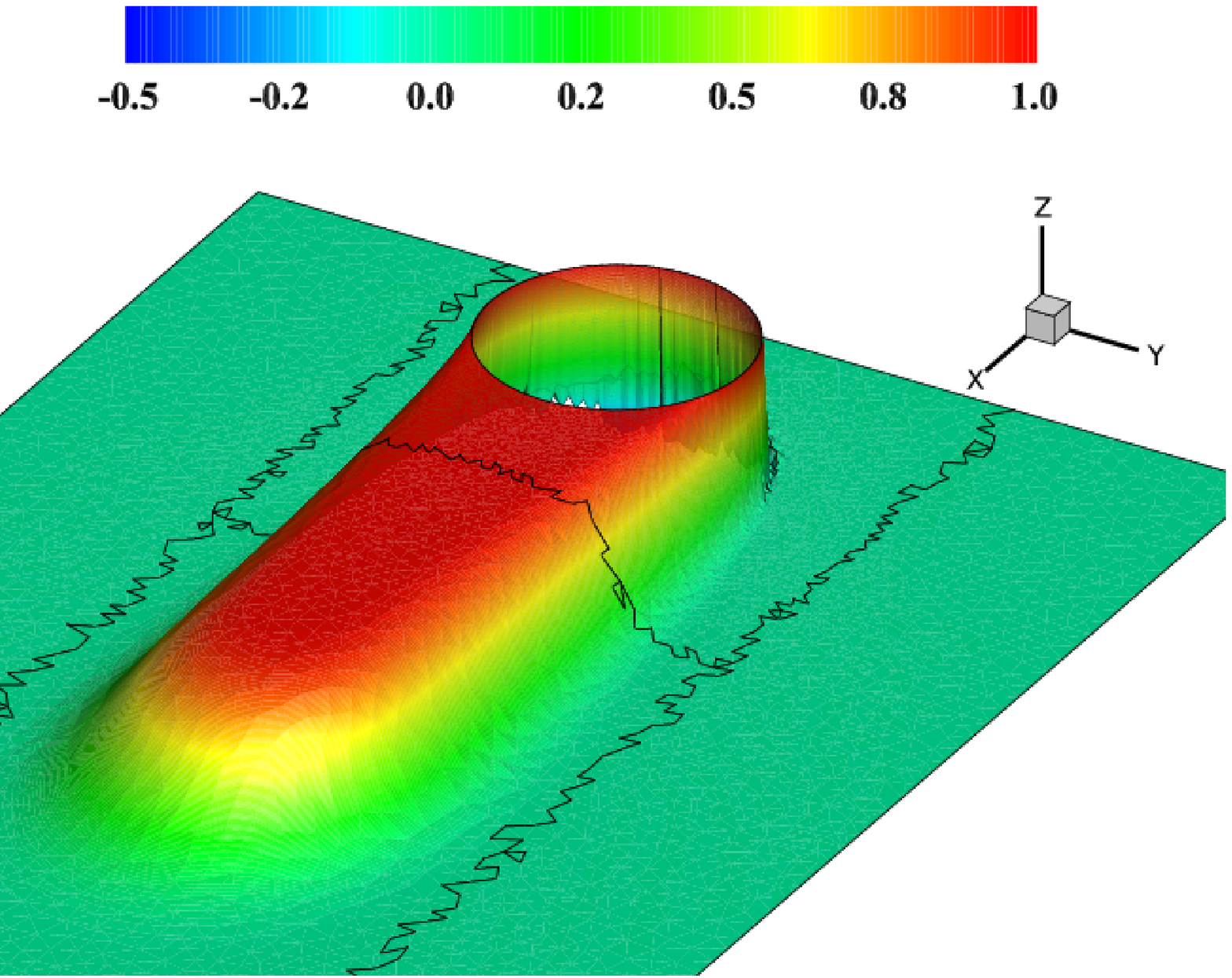}}
  \caption{Two-dimensional transient Hemker problem:
  	 The value of 
    concentrations is shown on the domain of interest at $t = 5$.
    In this particular example, Galerkin weak formulation is 
    employed. In figure (a), $d$-continuity method 
    is employed to enforce continuity at the subdomain interface.
    The computational domain is partitioned into three subdomains. 
    Figure (b) shows the results when Baumgarte stabilization
    is employed to enforce continuity at the interface. 
    Spurious oscillations due to semi-discrete Galerkin
    method can be seen in near the circle. The minimum value of 
    concentrations seen in these examples is -0.439, which is 
    significant compared to the maximum, which is unity. 
    The values of numerical time-integration parameters 
    are given in Table \ref{Tbl:2D_Hemker_1}.
    \label{Fig:2D_Hemker_Galerkin}}
\end{figure}

\begin{figure}
\centering
\subfigure[$d$-continuity method]{
\includegraphics[scale = 0.32, clip]{2D_Hemker_GLS_SUPG_Galerkin_d_Continuity.eps}}
\subfigure[Baumgarte stabilization]{
\includegraphics[scale = 0.32, clip]{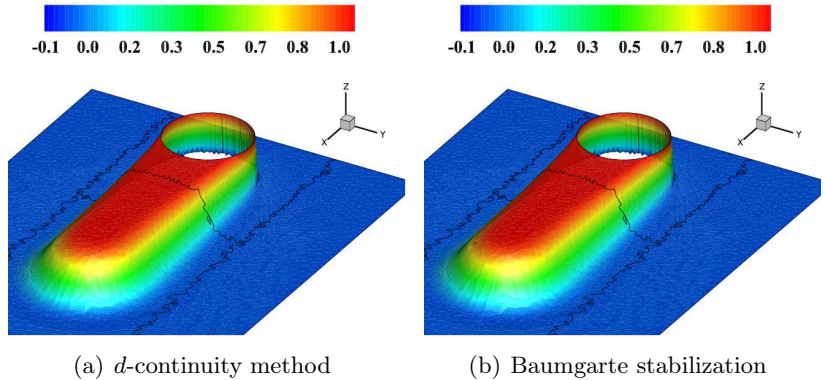}}
\caption{Two-dimensional transient Hemker problem:~
	 Concentrations 
	at $t = 5$ are shown. GLS formulation is used in subdomain 1, SUPG 
	formulation is used in subdomain 2, and the standard Galerkin 
	formulation is used in subdomain 3. The minimum value for 
	concentrations is -0.062 in both cases. Time-integration 
	parameters are given in Table \ref{Tbl:2D_Hemker_2}.
\label{Fig:2D_Hemker_GLS_SUPG_Galerkin}}
\end{figure}

\begin{figure}
\centering
\psfrag{d_con}{$d$-continuity}
\psfrag{baum_gal}{Baumgarte; Standard Galerkin}
\psfrag{baum_mixed}{Baumgarte; GLS-SUPG-Galerkin}
\psfrag{time}{time}
\psfrag{d_drift}{$\left\| \boldsymbol{d}_{\mathrm{drift}}^{(n)} \right\|_{\infty}$}
\includegraphics[scale = 0.5, clip]{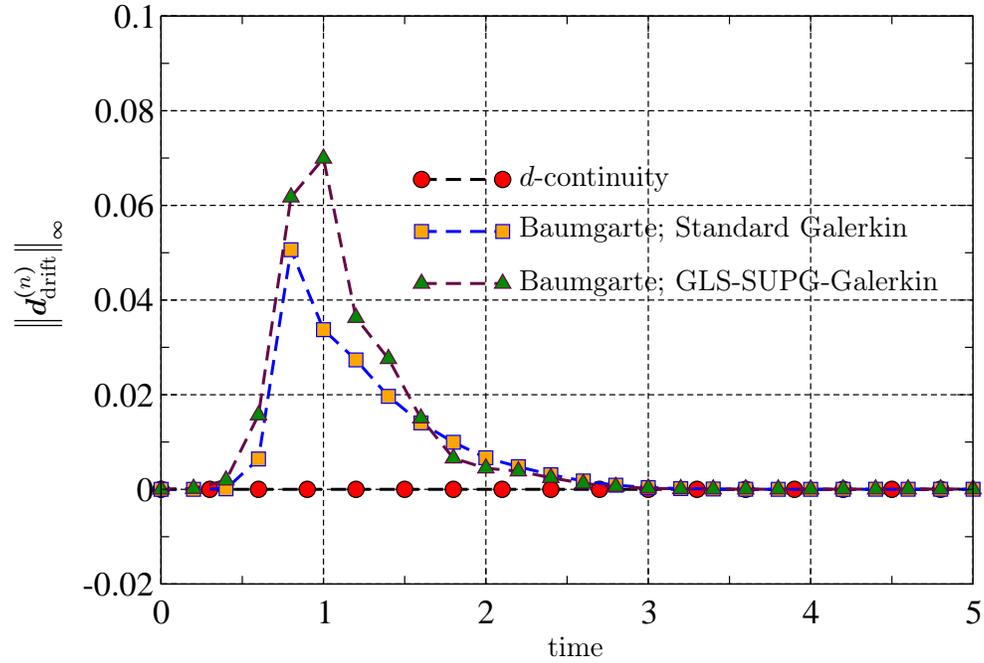}
\caption{Two-dimensional transient Hemker problem:~
 The maximum drift in concentrations is plotted against
time. The time integration parameters are the same
as in Figures \ref{Fig:2D_Hemker_Galerkin} and 
\ref{Fig:2D_Hemker_GLS_SUPG_Galerkin}. In case of
$d$-continuity method, there will be no drift in 
concentrations. As it can be observed in the 
Baumgarte stabilization method drifts are 
controlled. \label{Fig:2D_Hemker_drift}}
\end{figure}

\begin{figure}
  \centering
  \subfigure[A pictorial description of the problem.]{
    \includegraphics[scale=0.6,clip]{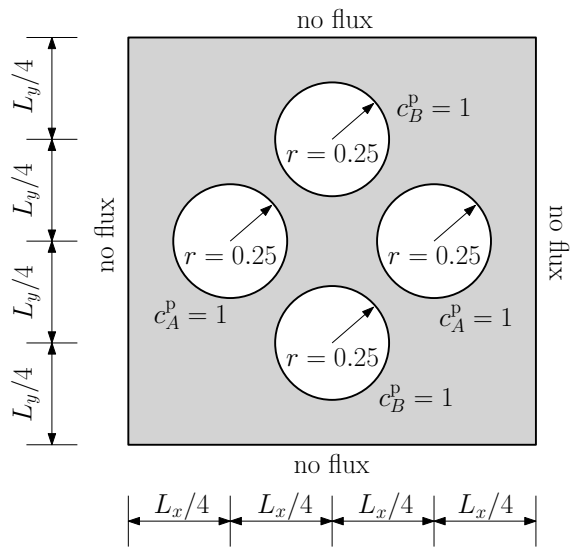}}
  \subfigure[Domain decomposition]{
    \includegraphics[scale=0.4,clip]{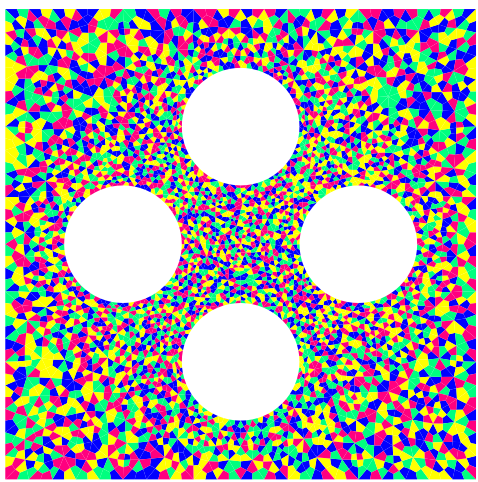}}
  \caption{Diffusion-controlled fast bimolecular reaction:~The 
    initial condition for the concentrations of all reactants is 
    taken to be zero. The computational domain is meshed using 
    5442 four-node quadrilateral elements, and is divided into 
    four subdomains using METIS \citep{METIS_paper}. Subdomain one is 
    indicated in blue color, subdomain two is indicated in green 
    color, subdomain three is in yellow color, and subdomain 
    four is in red color. (See the online version of the paper 
    for a color picture.) \label{Fig:Bimolecular_Baumgarte}}
\end{figure}

\begin{figure}
\centering
\subfigure[Concentration of invariant $F$ at $t = 0.01$.]{
    \includegraphics[scale=0.27,clip]{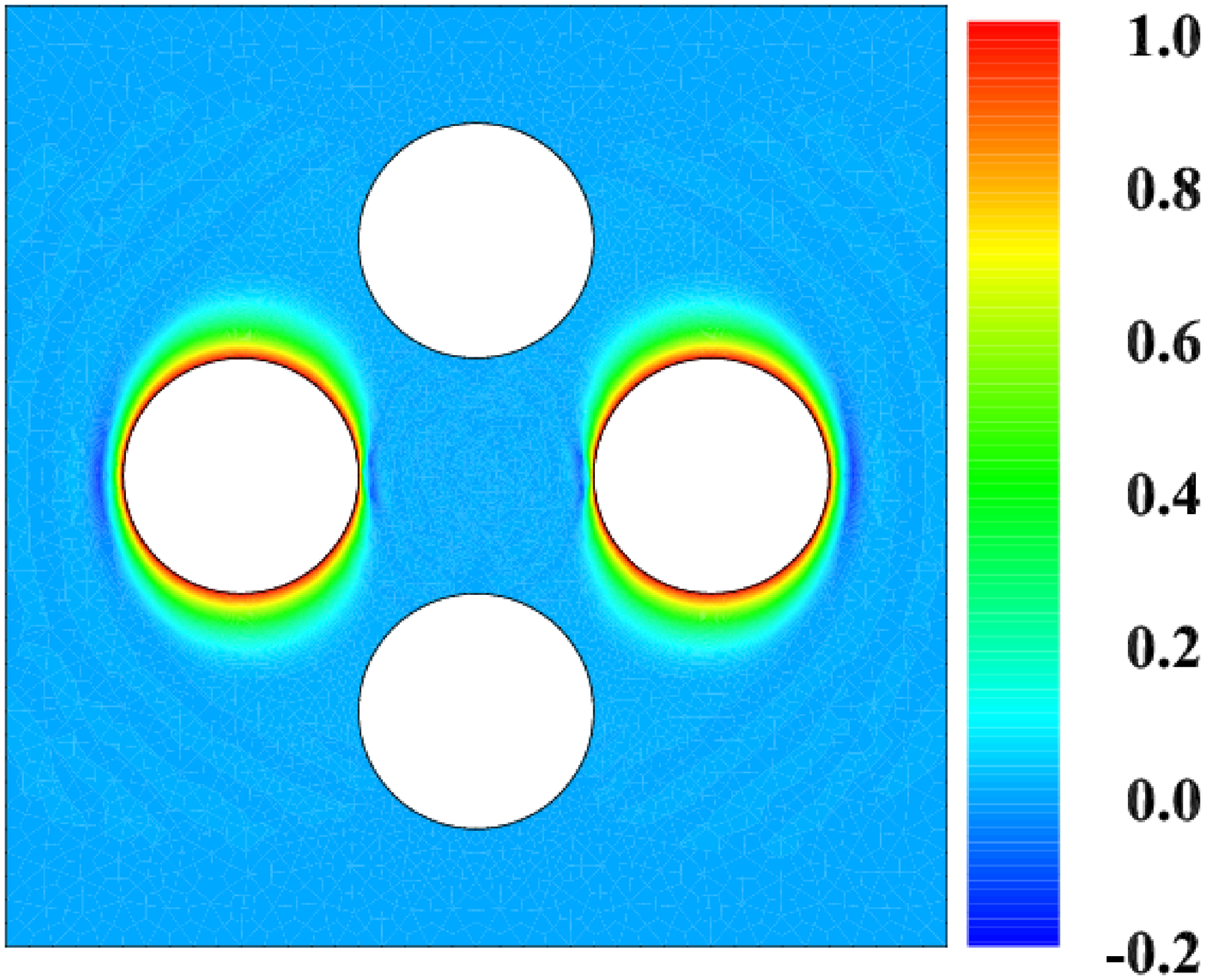}}
  \subfigure[Concentration of invariant $F$ at $t = 0.1$.]{
    \includegraphics[scale=0.27,clip]{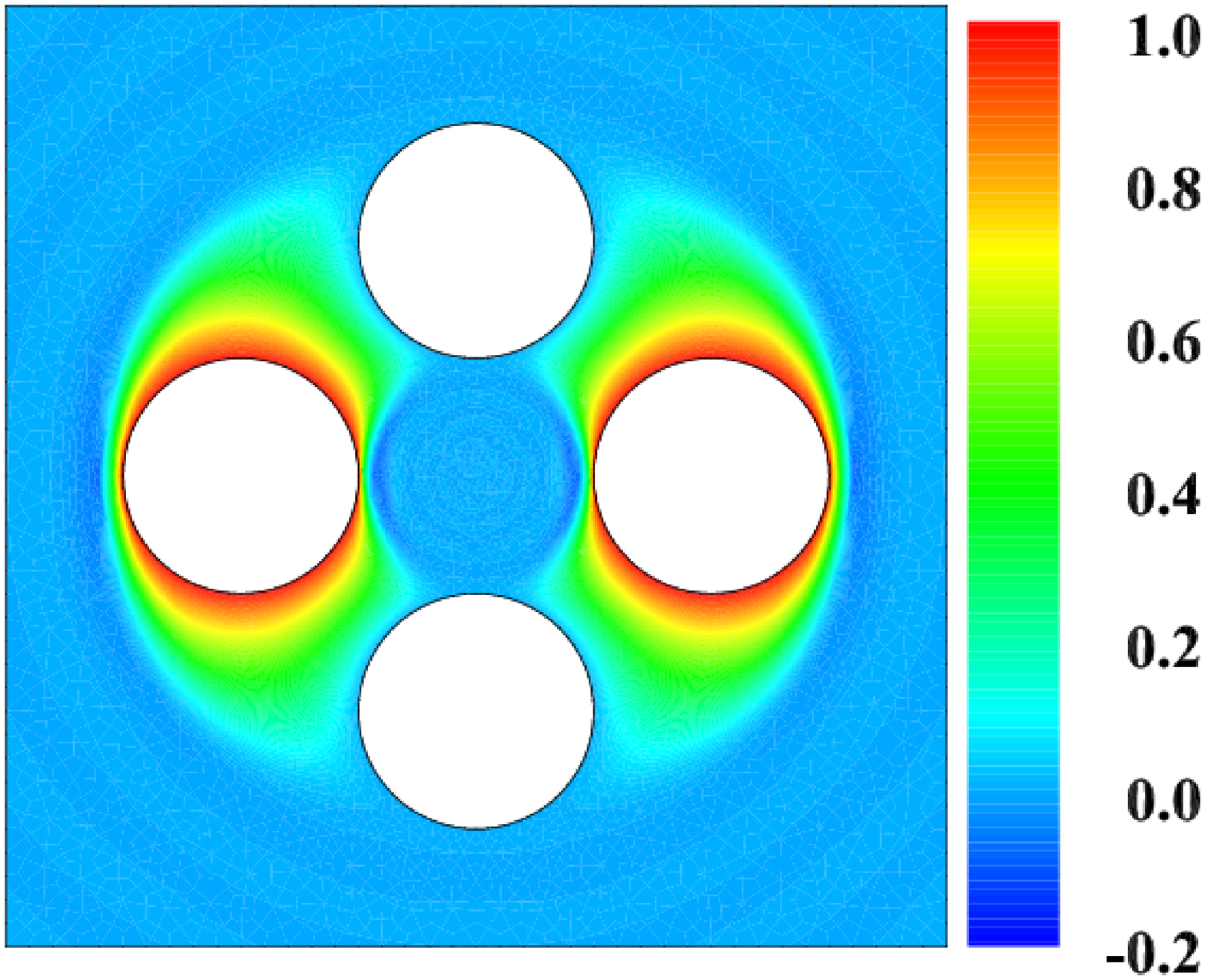}}
  \subfigure[Concentration of invariant $G$ at $t = 0.01$.]{
    \includegraphics[scale=0.27,clip]{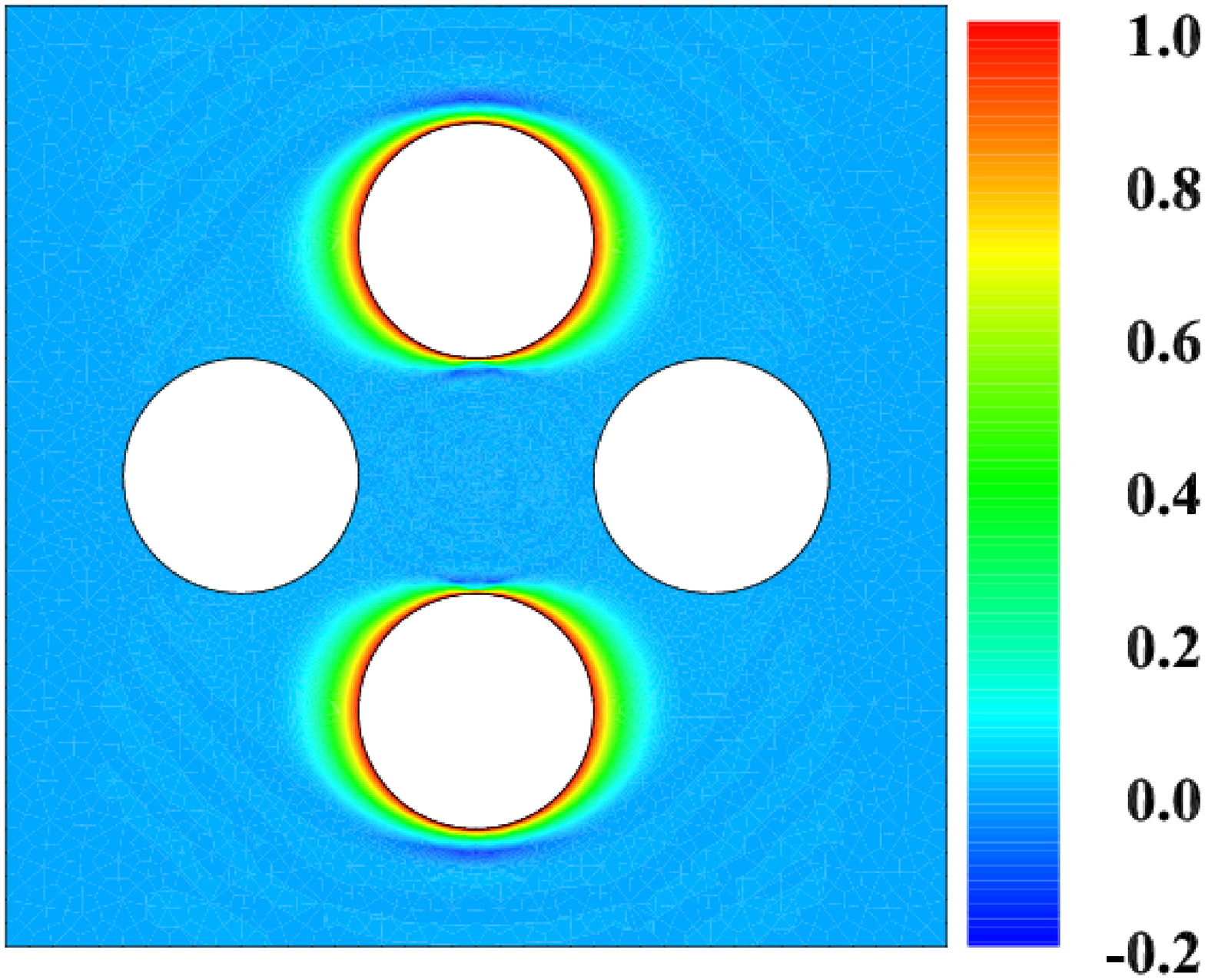}}
  \subfigure[Concentration of invariant $G$ at $t = 0.1$.]{
    \includegraphics[scale=0.27,clip]{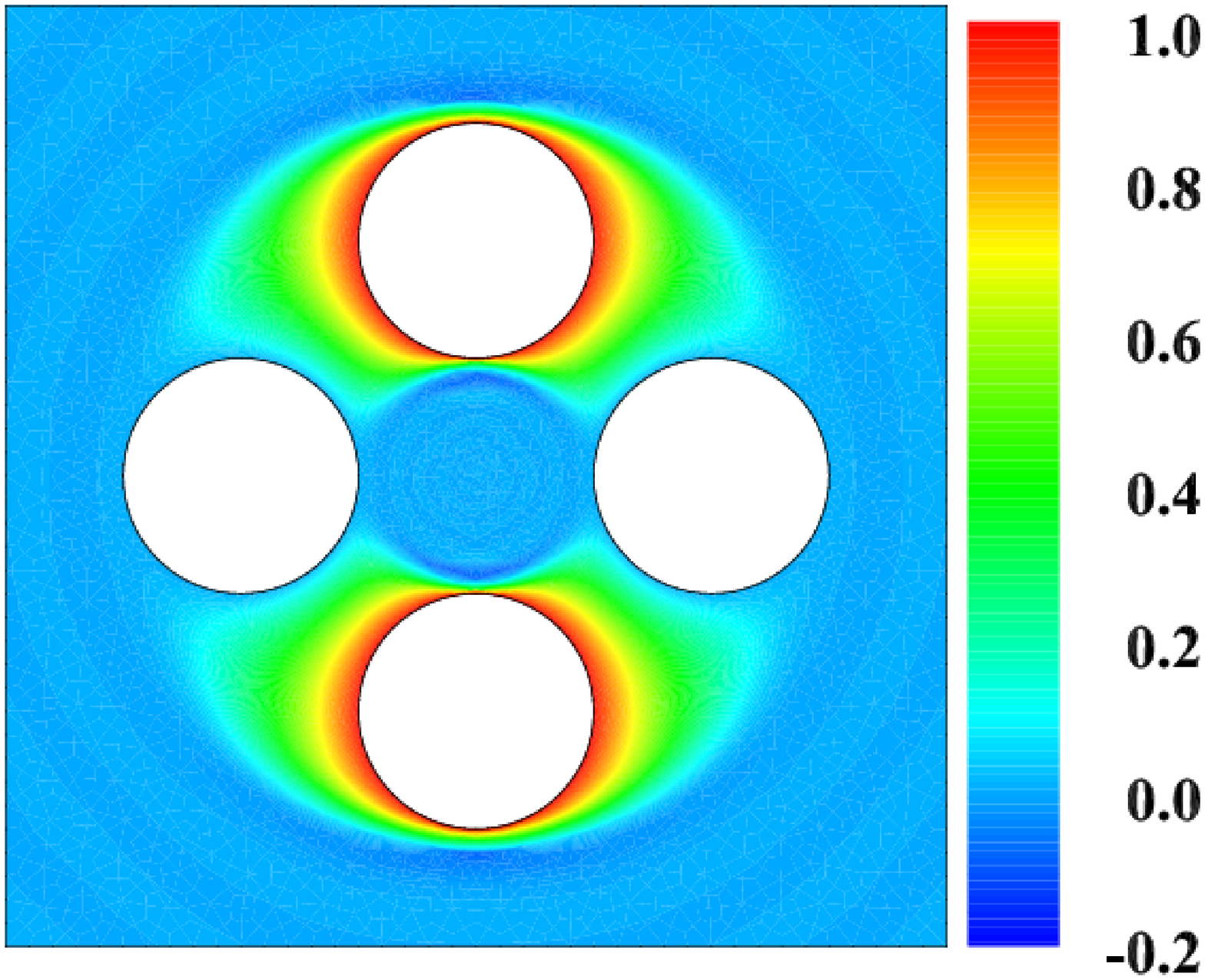}}
  \caption{Diffusion-controlled fast bimolecular reaction:
   This figure shows the concentrations 
    of the invariants $F$ and $G$ at $t = 0.01$ and 
    $t = 0.1$. \label{Fig:Bimolecular_Baumgarte_set1_FandG}}
\end{figure}

\begin{figure}
  \centering
  \subfigure[Concentration of reactant $A$ at $t = 0.01$.]{
    \includegraphics[scale=0.27,clip]{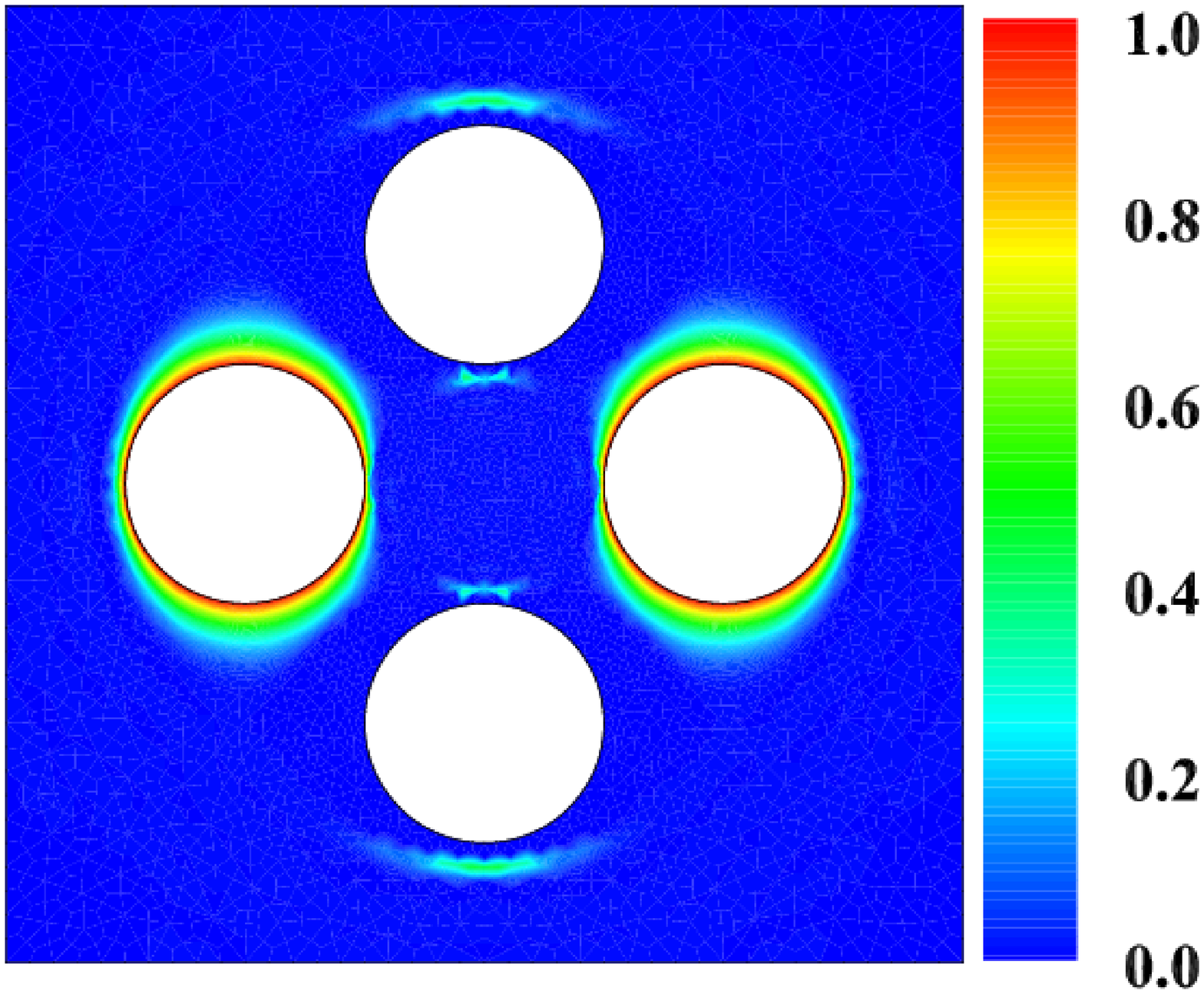}}
  \subfigure[Concentration of reactant $A$ at $t = 0.1$.]{
    \includegraphics[scale=0.27,clip]{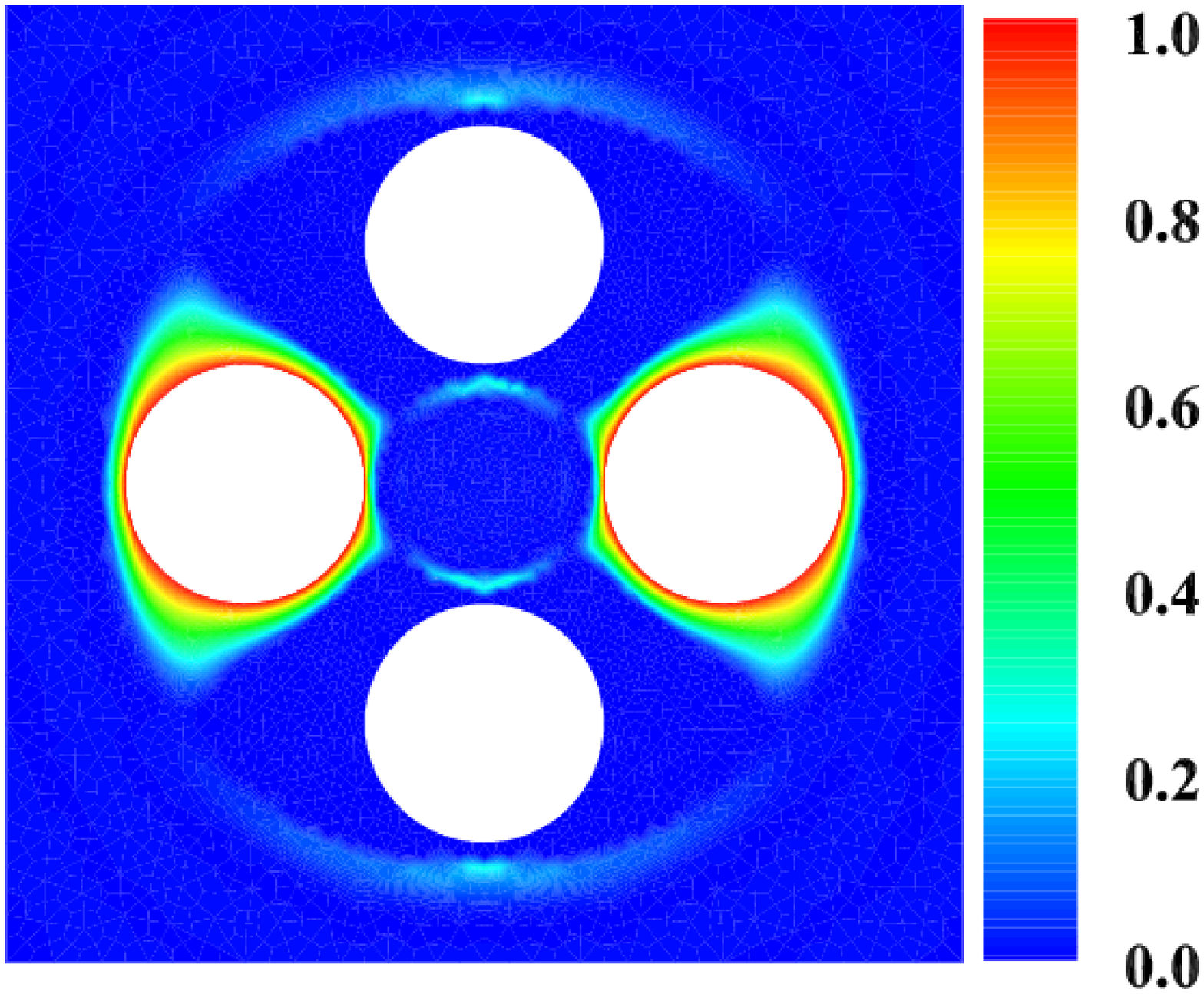}}
  \subfigure[Concentration of reactant $B$ at $t = 0.01$.]{
    \includegraphics[scale=0.27,clip]{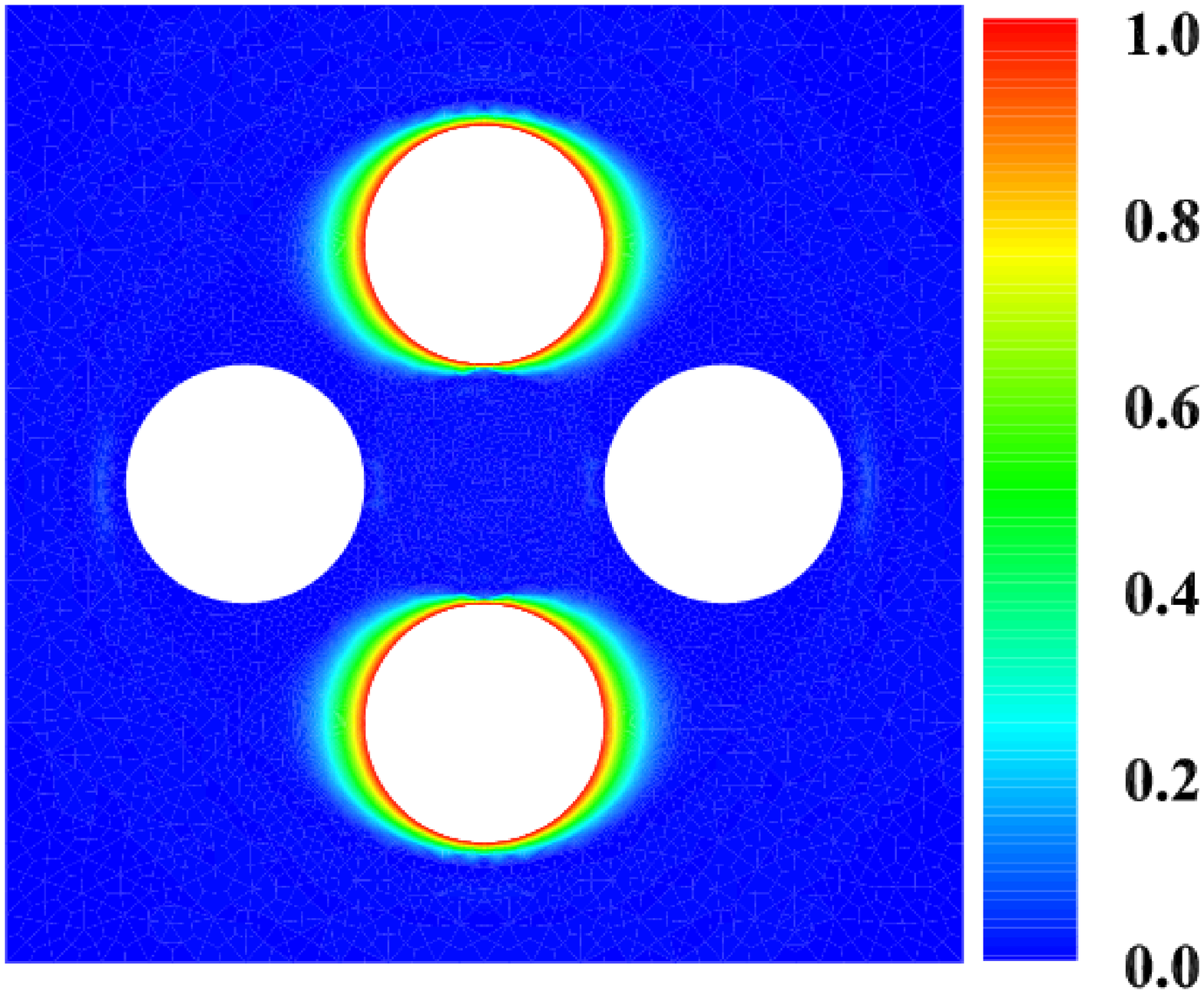}}
  \subfigure[Concentration of reactant $B$ at $t = 0.1$.]{
    \includegraphics[scale=0.27,clip]{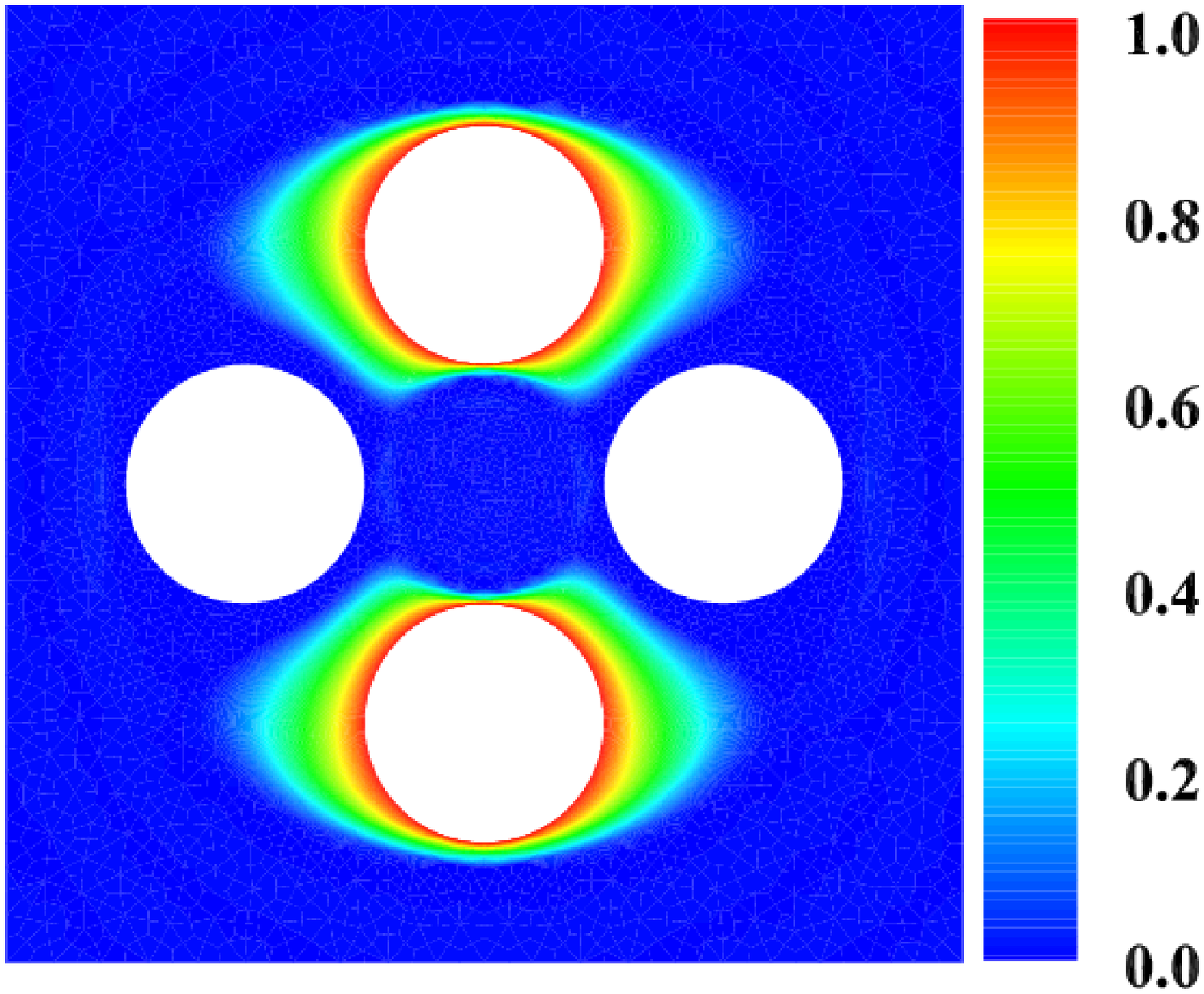}}
  \subfigure[Concentration of product $C$ at $t = 0.01$.]{
    \includegraphics[scale=0.27,clip]{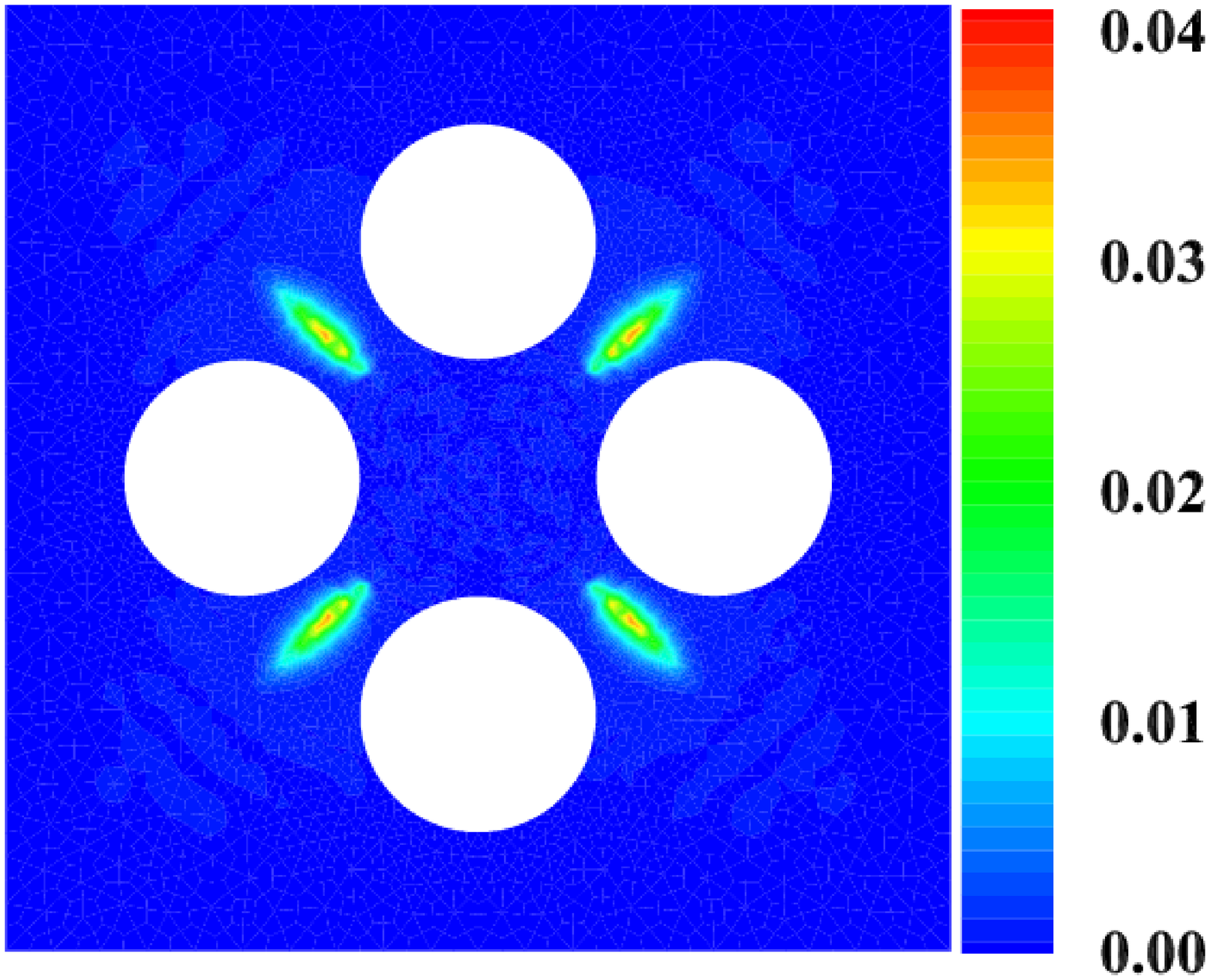}}
  \subfigure[Concentration of product $C$ at $t = 0.1$.]{
    \includegraphics[scale=0.27,clip]{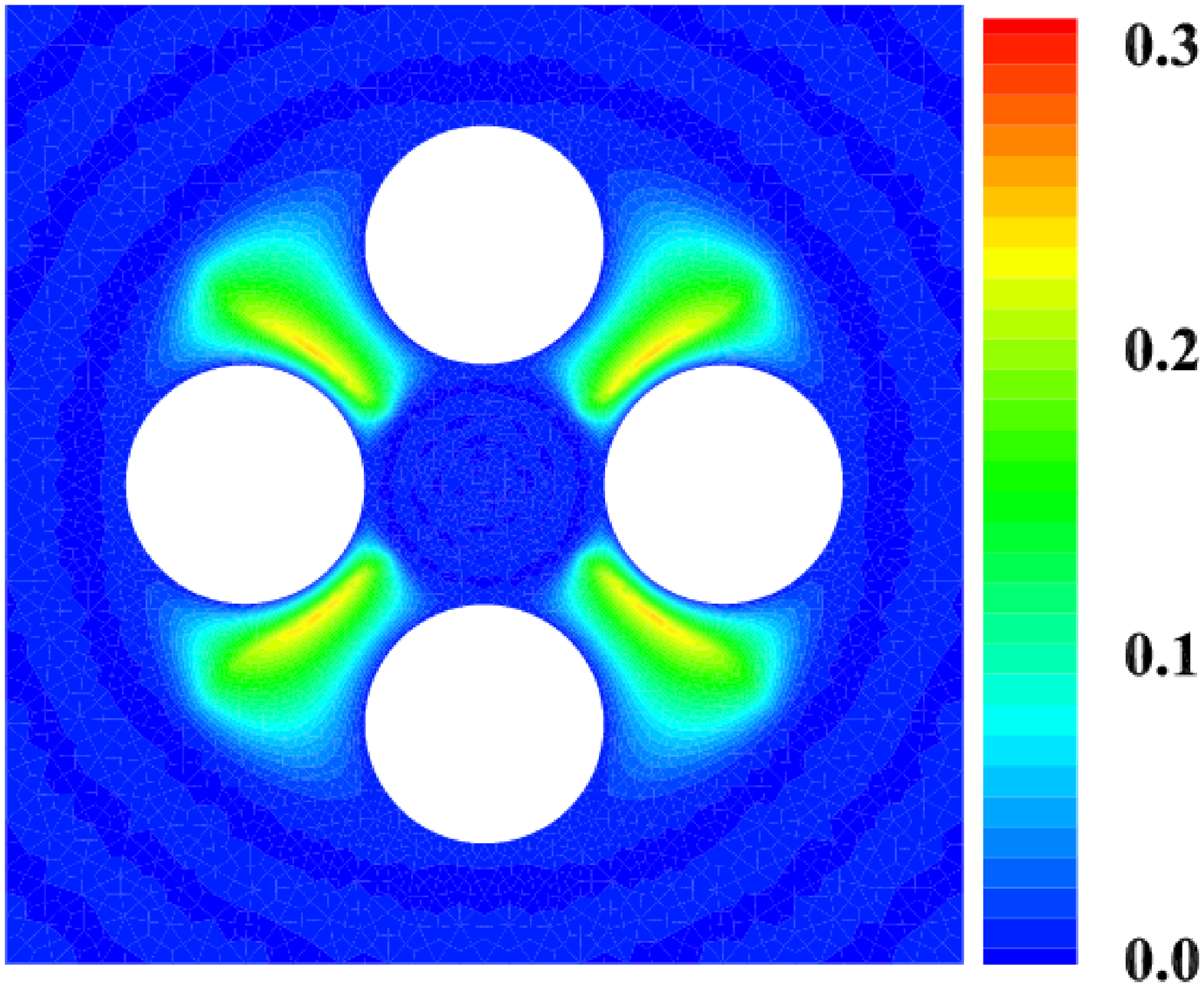}}
  \caption{Diffusion-controlled fast bimolecular reaction:
  ~Concentrations of the reactants and 
    the product are shown at $t = 0.01$ and $t = 0.1$. 
    \label{Fig:Bimolecular_Baumgarte_set1}}
\end{figure}

\begin{figure}
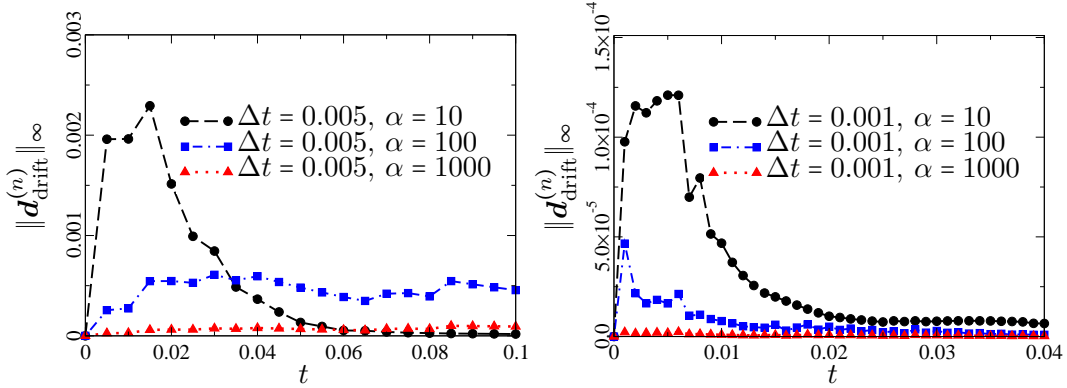

\centering
\psfrag{t}{$t$}
\psfrag{d}{$\| \boldsymbol{d}_{\mathrm{drift}}^{(n)}\|_{\infty}$}
\subfigure{
\psfrag{s1}{$\Delta t = 0.005$, $\alpha = 10$}
\psfrag{s2}{$\Delta t = 0.005$, $\alpha = 100$}
\psfrag{s3}{$\Delta t = 0.005$, $\alpha = 1000$}
\includegraphics[scale = 0.265, clip]{Bimolecular_Baumgarte_drift_1.eps}
}
\subfigure{
\psfrag{s1}{$\Delta t = 0.001$, $\alpha = 10$}
\psfrag{s2}{$\Delta t = 0.001$, $\alpha = 100$}
\psfrag{s3}{$\Delta t = 0.001$, $\alpha = 1000$}
\includegraphics[scale = 0.265, clip]{Bimolecular_Baumgarte_drift_2.eps}
}
\caption{Diffusion-controlled fast bimolecular 
  reaction:~ This figure shows the drift in the 
  concentration of the chemical species $C$ in 
  the $\infty$-norm along the subdomain interface 
  under the Baumgarte stabilization coupling method. 
  The subdomain time-steps are $\Delta t_1 = \Delta 
  t_3 = 5 \times 10^{-4}$, and $\Delta t_2 = \Delta 
  t_4 = 10^{-3}$. Implicit Euler method is employed 
  in subdomains 1 and 3 (i.e., $\vartheta_1 = 
  \vartheta_3 = 1$), and midpoint rule is employed 
  in subdomains 2 and 4 (i.e., $\vartheta_2 = \vartheta_4 
  = 1/2$). There are 10047 interface constraints in this 
  problem. 
  The main observation is that the drift always decreases 
  with decrease in system time-step. On the other hand, 
  the drift typically decreases with increase in the Baumgarte 
  stabilization parameter $\alpha$. But the presence of 
  subcycling and mixed methods will alter the monotonic 
  decreasing property over the entire time of interest.  
  Although subcycling is present in this 
  problem, it has been observed that the 
  drifts followed the general trend predicted 
  by equation \eqref{Eqn:S4_Drift_Measurement}, 
  which assumes no subcycling. 
  \label{Fig:Bimolecular_Baumgarte_Drift}}
\end{figure}

\begin{figure}
  \centering
  \psfrag{ly}{$L_y / 2$}
  \psfrag{lx}{$L_x$}
  \psfrag{cA}{$c_A^{\mathrm{p}}$}
  \psfrag{cB}{$c_B^{\mathrm{p}}$}
  \psfrag{cA0}{$c_A \left( \mathbf{x}, t = 0\right) = 0$}
  \psfrag{cB0}{$c_B \left( \mathbf{x}, t = 0\right) = 0$}
  \psfrag{cC0}{$c_C \left( \mathbf{x}, t = 0\right) = 0$}
  \psfrag{nf}{$c_A^{\mathrm{p}} = c_B^{\mathrm{p}} = 0$}
  \subfigure[A pictorial description of the problem.]{
    \label{Fig:Bimolecular_Problem_with_advection_a}
  \includegraphics[scale = 0.8]{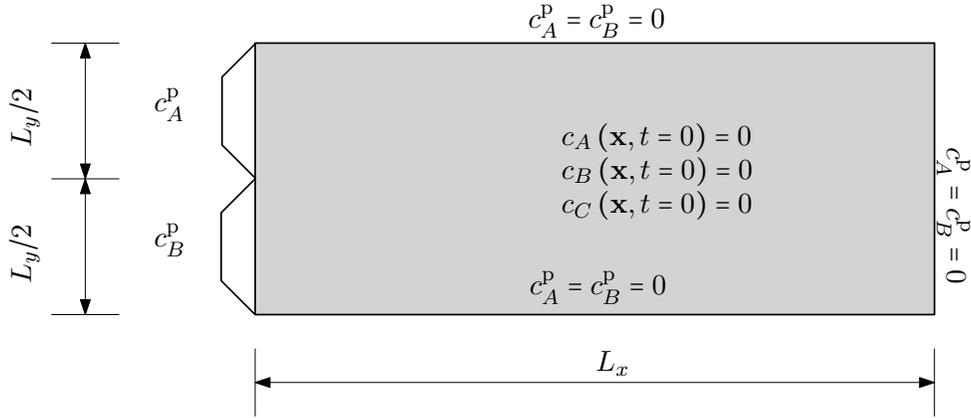}}
  \subfigure[Decomposition of the computational domain 
  into subdomains.]{
  \label{Fig:Bimolecular_Problem_with_advection_b}
  \includegraphics[scale = 0.75, clip]{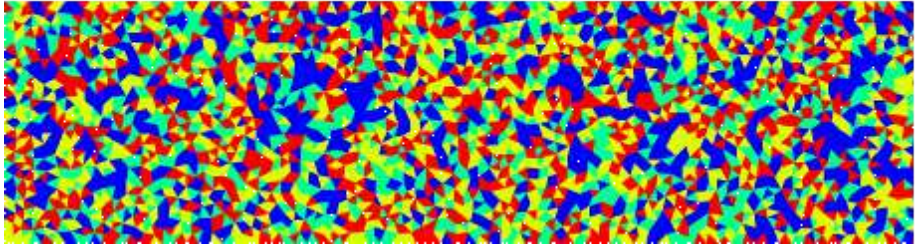}
  }
  \caption{Fast bimolecular reaction with advection:~  
    Chemical species $A$ and $B$ pumped into the reaction 
    chamber from the left side and produce the product $C$ 
    as a result of the chemical reaction. The computational 
    domain is divided into four subdomains, which are indicated 
    using different colors. Subdomains 1, 2, 3 and 4 are, 
    respectively, indicated in blue, green, yellow and red 
    colors. The computational domain is meshed using 4148 
    three-node triangular elements. The decomposition 
    of the computational domain is done using the METIS 
    software package \citep{METIS_paper}. Note that each subdomain 
    consists of many non-contiguous parts, and is highly 
    unstructured. (See the online version of the paper for 
    a color picture.) \label{Fig:Bimolecular_Problem_with_advection}}
\end{figure}

\begin{figure}
  \subfigure[Concentration of the product $C$ at $t = 0.5.$]{
    \includegraphics[scale=0.65]{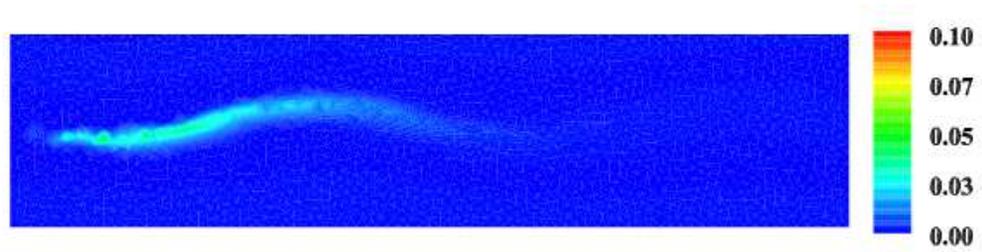}}
  \subfigure[Concentration of the product $C$ at $t = 1.5.$]{
    \includegraphics[scale=0.65]{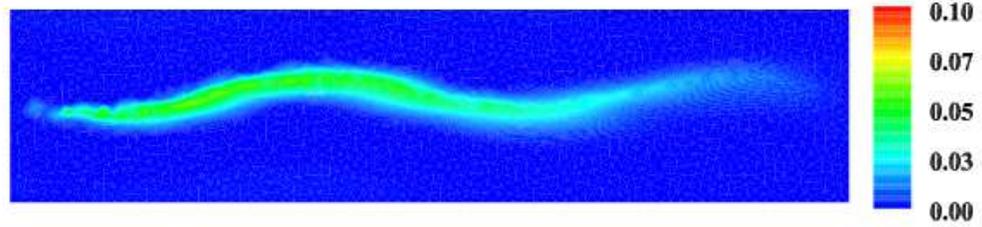}}
    \subfigure[Concentration of the product $C$ at $t = 4.0.$]{
    \includegraphics[scale=0.65]{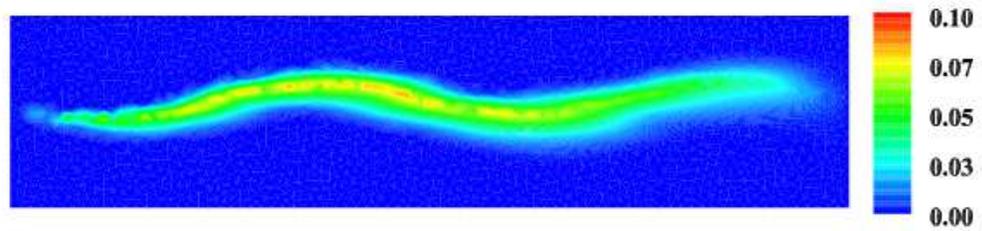}}
  \caption{Fast bimolecular reaction with advection:~This 
    figure shows the concentration of the product $C$ at 
    various instances of time obtained using the proposed 
    $d$-continuity multi-time-step coupling method. The 
    system time-step is taken to be $\Delta t = 0.1$, and 
    the subdomain time-steps are $\Delta t_1 = 0.01$, 
    $\Delta t_2 = 0.05$, $\Delta t_3 = 0.01$ and $\Delta 
    t_4 = 0.05$. Implicit Euler method is employed in 
    subdomains 1 and 3 (i.e., $\vartheta_1 = \vartheta_3 = 1$),
     and the midpoint rule is employed 
    in subdomains 2 and 4 (i.e., $\vartheta_2 = \vartheta_4 = 1/2$). 
    As one can see from the figure, 
    there is no drift along the subdomain interface, and 
    the proposed coupling method performed well. 
  \label{Fig:Bimolecular_d_Continuity}}
\end{figure}

\end{document}